\definecolor{darkblue}{rgb}{0.0,0,0.7} 
\newcommand{\darkblue}{\color{darkblue}} 
\definecolor{darkred}{rgb}{0.7,0,0} 
\newcommand{\defn}[1]{\emph{\darkblue #1}} 
\def\lA{\ell_A}
\def\FA{\mathcal{F}_{\Delta}^A}
\def\Bic{\mathscr{B}(\Phi^+)}
\def\W{{\mathcal{W}}}
\def\S{{\mathcal{S}}}
\def\H{{\mathcal{H}}}
\newcommand\bs{{\boldsymbol s}}
\newcommand\br{{\boldsymbol r}}
\newcommand\bt{{\boldsymbol t}}
\newcommand\bw{{\boldsymbol w}}
\newcommand\bx{{\boldsymbol x}}
\newcommand\bu{{\boldsymbol u}}
\newcommand\bv{{\boldsymbol v}}
\newcommand\by{{\boldsymbol y}}
\newcommand\bz{{\boldsymbol z}}
\def\T{{\mathcal{T}}}
\def\H{\mathcal{H}}
\def\Kb{K^b(\mathcal{B})}
\def\Kr{K^b(\mathcal{R})}
\def\Br{B_{\W}}
\newtheorem{theorem}{Theorem}[section]
\newtheorem{proposition}[theorem]{Proposition}
\newtheorem{lemma}[theorem]{Lemma}
\newtheorem{corollary}[theorem]{Corollary}
\newtheorem{conjecture}[theorem]{Conjecture}
\theoremstyle{definition}
\newtheorem{definition}[theorem]{Definition}
\newtheorem{remark}[theorem]{Remark}
\newtheorem{example}[theorem]{Example}
\newcommand{\ts}{\textsuperscript}
\def\W{{\mathcal{W}}}
\def\S{{\mathcal{S}}}
\def\B{{\mathcal{B}}}
\def\H{{\mathcal{H}}}
\def\T{{\mathcal{T}}}
\def\gi{\Gamma_{\geq i}^A}
\def\gii{\Gamma_{\geq i+1}^A}
\def\Kbp{K^b(\mathcal{B})^{\geq 0}}
\def\Kbn{K^b(\mathcal{B})^{\leq 0}}
\author{Thomas Gobet}
\address{TU Kaiserslautern, Fachbereich Mathematik, Postfach 3049, 67653 Kaiserslautern, Germany.}
\email{gobet@mathematik.uni-kl.de} 
\title[Twisted filtrations of Soergel bimodules]{Twisted filtrations of Soergel bimodules and linear Rouquier complexes}
\begin{document}

\maketitle

\begin{abstract}
{We consider twisted standard filtrations of Soergel bimodules associated to arbitrary Coxeter groups and show that the graded multiplicities in these filtrations can be interpreted as structure constants in the Hecke algebra. This corresponds to the positivity of the polynomials occurring when expressing an element of the canonical basis in a generalized standard basis twisted by a biclosed set of roots in the sense of Dyer, and comes as a corollary of Soergel's conjecture. We then show the positivity of the corresponding inverse polynomials in case the biclosed set is an inversion set of an element or its complement by generalizing a result of Elias and Williamson on the linearity of the Rouquier complexes associated to lifts of these basis elements in the Artin-Tits group. These lifts turn out to be generalizations of Mikado braids as introduced in a joint work with Digne. This second positivity property generalizes a result of Dyer and Lehrer from finite to arbitrary Coxeter groups.}
\end{abstract}

\tableofcontents


\section{Introduction}

Let $(\W,\S)$ be a Coxeter system with $|\S|<\infty$. Let $\H$ be the corresponding Iwahori-Hecke algebra over the ring $\mathbb{Z}[v,v^{-1}]$ with standard basis $\{T_w\}_{w\in\W}$ and costandard basis $\{T_{w^{-1}}^{-1}\}_{w\in\W}$. Denote by $\T$ the set of conjugates of the elements of $\S$. In their seminal paper of $1979$ \cite{KL}, Kazhdan and Lusztig introduced two canonical bases $\{C_w\}_{w\in\W}$ and $\{C_w'\}_{w\in\W}$ of $\H$ and related them to the representation theory of $\H$ and $\W$. In case $\W$ is a finite Weyl group, the canonical bases are closely related to the geometry of Schubert varieties. Kazhdan and Lusztig conjectured that when writing an element $C_w'$ as a linear combination of elements of the standard basis, the coefficients are polynomials with nonnegative coefficients. These polynomials became known as \defn{Kazhdan-Lusztig polynomials} and are broadly studied in Lie theory, representation theory and combinatorics (see for instance \cite{BjBr} or \cite{Humph} for introductions to the topic). 

While Kazhdan and Lusztig proved their positivity conjecture in 1980 in case $\W$ is a  finite or affine Weyl group \cite{KLproof} using geometric methods, the general case remained mysterious until recently. Soergel proposed \cite{S1}, \cite{S} an approach allowing one to replace the geometry involved in the Weyl group case by a remarkable additive monoidal Krull-Schmidt category $\mathcal{B}$ of graded bimodules over a polynomial ring. These bimodules, nowadays called \defn{Soergel bimodules}, can be defined for an arbitrary Coxeter system and provide a categorification of (the canonical basis  $\{C_w'\}_{w\in\W}$ of) $\H$. In this framework, Soergel formulated a purely algebraic conjecture implying Kazhdan-Lusztig's positivity conjecture in full generality \cite{S} and proved it for finite Weyl groups, using again geometry but suggesting the existence of an algebraic proof. Soergel's conjecture was proven by Elias and Williamson in \cite{EW}. See also \cite{Willkl} and the references therein for an overview of the topic. 

More precisely, indecomposable Soergel bimodules are indexed (up to graduation shifts and isomorphism) by the elements of $\W$. Denote by $\{B_w\}_{w\in\W}$ the family of (unshifted) indecomposable Soergel bimodules up to isomorphism. Soergel showed \cite{S} that the split Grothendieck ring $\left\langle \mathcal{B}\right\rangle$ of his category is isomorphic to the Hecke algebra and conjectured that the class $\left\langle B_w\right\rangle$ of the bimodule $B_w$ corresponds exactly to the element $C_w'$ under this isomorphism. He explicitly described the isomorphism and its inverse, called the \defn{character} map. More precisely, the class $\left\langle B\right\rangle$ of a bimodule $B\in\mathcal{B}$ corresponds to the element of $\H$ given by
$$\left\langle B\right\rangle\mapsto \sum_{x\in \W} \sum_{i\mathbb{Z}} [B:\Delta_x(i)] v^{i+\ell(x)} T_x,$$
where $[B:\Delta_x(i)]$ denotes the multiplicity of a graded bimodule $\Delta_x$ (not lying in $\mathcal{B}$ if $x\neq e$) in some canonical filtration of $B$ called \defn{standard filtration}. Such a filtration requires one to fix a total order on the group which refines the Bruhat order. As a consequence, since the coefficients of the polynomials occurring on the right hand side are multiplicities, they are nonnegative. Hence Soergel's conjecture implies Kazhdan-Lusztig's positivity conjecture. 

As mentioned, the definition of the above filtration requires one to have fixed a linear extension of the Bruhat order on $\W$. Reversing the Bruhat order, Soergel shows that one also has a second filtration which can be thought of as a dual version of the standard filtration and can be related to structure constants in the costandard basis. More precisely, the character map can also be described via
$$\left\langle B\right\rangle\mapsto\sum_{x\in\W}{\sum_{i\in\mathbb{Z}} [B:\nabla_x (i)] v^{i-\ell(x)}} T_{x^{-1}}^{-1},$$
where $[B:\nabla_x (i)]$ denotes the multiplicity of a graded bimodule $\nabla_x$ in this second canonical filtration of $B\in\mathcal{B}$, called the \defn{costandard filtration} of $B$. 

Dyer considered \cite{Dyerth} the following positivity statements:

\begin{equation}\label{p1}
T_{x}^{-1} T_y\in\sum_{w\in\W} \mathbb{Z}_{\geq 0}[v^{\pm 1}] C_w,~\forall x,y\in\W,
\end{equation}

\begin{equation}\label{p2}
C_x' T_y\in\sum_{w\in\W} \mathbb{Z}_{\geq 0}[v^{\pm 1}] T_w,~\forall x,y\in\W.
\end{equation}

He gave a combinatorial proof that, for finite Coxeter groups, the conditions (\ref{p1}) and (\ref{p2}) are equivalent and proved both for universal Coxeter systems. Dyer and Lehrer \cite{DL} then proved condition (\ref{p2}) for finite Weyl groups using the fact that Kazhdan-Lusztig's positivity conjecture (i.e. (\ref{p2}) for $y=e$) is known for these groups and a geometrical argument in categories of perverse sheaves. Dyer later showed \cite{Dyerrep} (relying on partially unpublished results) that (\ref{p2}) would hold for finite Coxeter groups as a consequence of Soergel's conjecture. Grojnowski and Haiman \cite{GH} generalized (\ref{p2}) to affine Weyl groups also using geometric techniques. 

The aim of this paper is to provide general proofs of these two statements, without the finiteness assumption. Thanks to work of Elias and Williamson \cite{EW}, (\ref{p2}) for $y=e$ is known for arbitrary Coxeter systems, hence for proving (\ref{p2}) it remains to understand how one can find a replacement for Dyer and Lehrer's argument in the framework of Soergel's approach. Rewriting (\ref{p2}) as 

\begin{equation}\label{p3}
C_x' \in\sum_{w\in\W} \mathbb{Z}_{\geq 0}[v^{\pm 1}] T_w T_y^{-1},~\forall x,y\in\W,
\end{equation}
it is tempting to try to generalize Soergel's standard and costandard filtrations. Indeed, for $y=e$ above one has the expansion of $C_w'$ in the standard basis, while in case the group is finite one has the expansion in the costandard basis for $y=w_0$, the longest element in $\W$. Since we want to consider arbitrary Coxeter systems, this shows that considering bases of the form $\{T_w T_y^{-1}\}_{w\in\W}$ for various $y$ may not be sufficient since the costandard basis is in general not caught.  

In case $\W$ is finite, elements of the form $T_w T_y^{-1}$ appearing in the above paragraph or of the form $T_{x}^{-1} T_y$ as appearing in (\ref{p1}) turn out to be images of so-called \defn{Mikado braids} as introduced in a joint work with Digne \cite{DG}. In finite type, denoting by $\bw$ the canonical lift of $w$ in the Artin-Tits group $\Br$ attached to the Coxeter system, these braids can be defined as the elements which can be written in the form $\bu^{-1} \bv$ for $\bu$, $\bv$ canonical lifts of elements $u,v\in\W$ (we call such an element a \defn{right Mikado braid}), and these also turn out to be exactly the braids which can be written in the form $\bw \by^{-1}$ for some $w,y\in\W$ (we call such an element a \defn{left Mikado braid}). Hence in finite type, for any $w,y\in\W$, there exist $u,v\in\W$ such that $T_w T_y^{-1}=T_u^{-1} T_v$. Also, one can think of (\ref{p1}) as of a positivity of generalized inverse Kazhdan-Lusztig polynomials. 

For infinite groups however, it is false in general that left Mikado braids and right Mikado braids coincide. Nevertheless, Dyer proposed in \cite{Dyernil} a remarkable general definition which yields exactly the left (or right) Mikado braids for finite $\W$ but which strictly contain both sets in general. Dyer's definition relies on the theory of biclosed sets of roots or reflections (\cite{Dyerweak}), that is, sets of positive roots which are closed under positive real linear combinations, and whose complement is also closed. 

More precisely, to any biclosed set $A\subseteq\T$ and any $x\in\W$, Dyer associated \cite{Dyernil} a canonical lift $x_A$ of $x$ in $\Br$ obtained by lifting any reduced expression $s_1\cdots s_k$ of $x$ in the braid represented by the word $s_1^{\varepsilon_1}\cdots s_k^{\varepsilon_k}$, where $\varepsilon_i=-1$ if $s_k\cdots s_i\cdots s_k$ lies in $A$ and $\varepsilon_i=1$ otherwise. He showed that $x_A$ is independent of the chosen reduced expression for $x$ and that one can pass from any lifted reduced expression to any other by applying \defn{mixed braid relations}, that is, braid relations possibly involving inverses of the Artin generators. 

Denoting by $T_{x,A}$ the image of $x_A$ in the Hecke algebra, the set $\{T_{x,A}\}_{x\in\W}$ yields a basis of $\H$. For $A=\emptyset$ it is the standard basis, while for $A=\T$ it is the costandard basis. Dyer \cite{Dyershellings2} and Edgar \cite{Edgar} attached to any biclosed set $A$ a Bruhat-like order $<_A$ and a $\mathbb{Z}$-valued length function $\ell_A$. We use these orders to generalize Soergel's standard filtration and interpret structure constants of the canonical basis in the basis $\{T_{x,A}\}$ as graded multiplicities (Theorem \ref{threeparam}):

\begin{theorem}[Three-parameter Kazhdan-Lusztig positivity]\label{thm:3par}
Let $w\in \W$, $A\subseteq\T$ be biclosed. Write $C'_w=\sum_{x\in\W} h_{x,w}^A T_{x,A}$. Then $${h_{x,w}^A}=\sum_{i\in\mathbb{Z}} [B_w:\Delta_x^A(i)]_A v^{i+\ell_A(x)}.$$ 
In particular the generalized Kazhdan-Lusztig polynomials $h_{x,w}^A\in\mathbb{Z}[v,v^{-1}]$ have nonnegative coefficients.
\end{theorem}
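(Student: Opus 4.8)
The plan is to build a ``twisted standard filtration'' $\Delta^A_\bullet$ of each Soergel bimodule $B\in\mathcal{B}$, directly generalizing Soergel's standard filtration, and then show that passing to classes in the split Grothendieck ring $\left\langle\mathcal{B}\right\rangle\cong\H$ recovers exactly the expansion in the basis $\{T_{x,A}\}_{x\in\W}$. First I would fix, for the given biclosed set $A$, a linear extension of Edgar's and Dyer's order $<_A$ on $\W$; the key structural input is that $<_A$ is still a suitable ``directed'' order with a $\mathbb{Z}$-valued length function $\ell_A$, so that one can filter a bimodule with subquotients of the form $\Delta^A_x(i)$, where $\Delta^A_x$ is the natural twisted analogue of the standard bimodule $\Delta_x$ (obtained by twisting the defining tensor factors according to the signs $\varepsilon_i$ attached to $A$, exactly as in Dyer's construction of $x_A$). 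I would first treat the generating objects $B_s$ and the Bott--Samelson bimodules $BS(\underline{w})$ by hand, checking that they admit such a twisted filtration and that its graded multiplicities are governed by the signs dictated by $A$; then the existence for general $B\in\mathcal{B}$ follows since $\mathcal{B}$ is the Karoubi envelope of the category of Bott--Samelson bimodules and twisted-filtered objects are closed under direct summands and shifts (a standard argument, as in Soergel's original proof).

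The second step is the comparison of characters. Soergel's character map sends $\left\langle B\right\rangle$ to $\sum_x\sum_i [B:\Delta_x(i)]\,v^{i+\ell(x)}T_x$. I would show that computing the character instead through the twisted filtration yields $\left\langle B\right\rangle\mapsto\sum_x\sum_i [B:\Delta^A_x(i)]_A\,v^{i+\ell_A(x)}T_{x,A}$. Concretely this amounts to checking, on Bott--Samelson generators where everything is explicit, that the $A$-twisted tensor factor has character $T_{s,A}=T_s$ or $T_s^{-1}$ according to $\varepsilon$, and then invoking multiplicativity of the character together with the fact that $T_{x,A}$ is itself built multiplicatively from a lifted reduced word via mixed braid relations; the independence of the reduced word on the Hecke-algebra side (which is exactly Dyer's statement that $x_A$ is well defined) matches the independence of the filtration multiplicities on the bimodule side. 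Since $\{T_{x,A}\}$ is a basis of $\H$, the coefficient of $T_{x,A}$ in $\left\langle B\right\rangle$ is uniquely $\sum_i[B:\Delta^A_x(i)]_A\,v^{i+\ell_A(x)}$.

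Finally I would apply Soergel's conjecture (the theorem of Elias and Williamson): $\left\langle B_w\right\rangle=C'_w$. Combining this with the character computation gives $h^A_{x,w}=\sum_{i\in\mathbb{Z}}[B_w:\Delta^A_x(i)]_A\,v^{i+\ell_A(x)}$, and since multiplicities are nonnegative integers and $\ell_A(x)\in\mathbb{Z}$, the polynomial $h^A_{x,w}$ lies in $\mathbb{Z}_{\geq 0}[v,v^{-1}]$.

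I expect the main obstacle to be the \emph{existence and well-definedness of the twisted filtration}, i.e.\ showing that an arbitrary $B\in\mathcal{B}$ admits a filtration with subquotients $\Delta^A_x(i)$ and that the multiplicities $[B:\Delta^A_x(i)]_A$ are well defined independently of the chosen linear extension of $<_A$. In the untwisted case this rests on the precise interplay between the Bruhat order, the $R$-bimodule structure of $\Delta_x$, and Soergel's $\mathrm{Hom}$-formula; here one must verify that all of these facts survive the twist by $A$ --- in particular that $\Delta^A_x$ still behaves like a ``brick'' with respect to $<_A$ (no nonzero maps increasing the order), which is where Dyer's mixed braid relations and the closure properties of biclosed sets do the real work. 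Once that combinatorial-homological compatibility is established, the character computation and the appeal to Elias--Williamson are essentially formal.
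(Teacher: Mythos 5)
Your proposal matches the paper's proof in all essentials: one defines a twisted support filtration with subquotients $\Delta^A_x(i)$ (in the paper $\Delta^A_x$ is simply $R_x(-\ell_A(x))$ — same standard bimodule, only the shift and the order $<_A$ change, not the tensor factors), establishes it for Bott--Samelson bimodules via the multiplication recursion and passes to summands by Krull--Schmidt, compares characters using $T_sT_{w,A}=T_{sw,A}$ for $sw>_Aw$, and concludes by Elias--Williamson. You also correctly isolate the real work, which the paper carries out by proving Deodhar's Z-property for $<_A$ and the absence of extensions between $R_x$, $R_y$ for $<_A$-incomparable $x,y$.
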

In the above theorem, $[B:\Delta_x^A(i)]_A$ denotes the multiplicity of a graded bimodule $\Delta_x^A$ in some canonical filtration of $B$ analogous to Soergel's standard filtration, but defined using the "twisted" order $<_A$. In case the biclosed set $A$ is given by an inversion set $N(y)$ of an element $y\in\W$, one then has $T_{xy^{-1},N(y)}=T_x T_y^{-1}$, hence the above Theorem implies condition (\ref{p3}). In fact, a finite set $A$ is biclosed if and only if $A=N(y)$ for some element $y\in\W$, hence in case $\W$ is finite the positivity statement 
\begin{equation}
C'_w\in\sum_{x\in\W} \mathbb{Z}_{\geq 0}[v^{\pm 1}] T_{x,A},~\forall w\in\W, ~\forall A~\text{biclosed},
\end{equation}
implied by Theorem \ref{thm:3par} is exactly equivalent to (\ref{p3}).

 Notice that this approach is very similar to (generalizations of) that of \cite{Dyerrep} but that Dyer's approach might be more adapted for further generalizations involving biclosed sets.  

Note that for infinite groups, there are in general biclosed sets which are neither inversion sets of elements or their complements, hence the positivity statement that we get is more general than (\ref{p3}) in that case. 

The analogue of the positivity of inverse generalized Kazhdan-Lusztig polynomials in that setting would be
\begin{equation}\label{p5}
T_{x,A}\in\sum_{w\in\W} \mathbb{Z}_{\geq 0}[v^{\pm 1}] C_w,~\forall x\in\W, ~\forall A~\text{biclosed},
\end{equation}
which Dyer conjectures \cite{Dyer:private}. We are not able to prove (\ref{p5}) in such a level of generality, but we prove it in case $A=N(y)$ or $A=\T\backslash N(y)$ for $y\in\W$ (Theorem \ref{thm:inversepos}):
\begin{theorem}
Let $x,y\in \W$. Then 
$$T_{x}^{-1} T_y\in\sum_{w\in\W} \mathbb{Z}_{\geq 0}[v^{\pm 1}] C_w.$$ 
Similarly, $T_x T_y^{-1}\in\sum_{w\in\W} \mathbb{Z}_{\geq 0}[v^{\pm 1}] C_w.$
\end{theorem}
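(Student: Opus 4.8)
We realise $T_x T_y^{-1}$, up to an overall power of $v$, as the class in the Grothendieck group $\langle\Kb\rangle\cong\H$ of a Rouquier complex associated to a Mikado braid, show that this complex is \emph{linear}, and then extract positivity by decategorifying and applying the involution of $\H$ which interchanges the two canonical bases.

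First I would work with the renormalised standard basis $H_w := v^{\ell(w)}T_w$, in which $H_s^2 = (v^{-1}-v)H_s + 1$ and $\langle B_w\rangle = C_w'$ for the convention $\langle B(1)\rangle = v\langle B\rangle$, so that $T_x^{-1}T_y = v^{\ell(x)-\ell(y)}H_x^{-1}H_y$ and $T_xT_y^{-1} = v^{\ell(y)-\ell(x)}H_xH_y^{-1}$. It is therefore enough to prove positivity in the $C_w$-basis for $H_xH_y^{-1}$ and $H_x^{-1}H_y$; and since the anti-automorphism of $\H$ fixing every $T_s$ carries $H_xH_y^{-1}$ to $H_{y^{-1}}^{-1}H_{x^{-1}}$ and fixes each $C_w$ up to replacing $w$ by $w^{-1}$, it suffices to treat $H_xH_y^{-1}$. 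Recall that the Rouquier complex functor sends $\bx\in\Br$ to a complex $F_{\bx}$ of Soergel bimodules with $\langle F_{\bx}\rangle = H_x$ and $\bx^{-1}$ to its inverse $F_{\bx}^{-1}$ with $\langle F_{\bx}^{-1}\rangle = H_x^{-1}$, and is monoidal; hence $\langle F_{\bx}\otimes F_{\by}^{-1}\rangle = H_xH_y^{-1}$. The braid $\bx\by^{-1}$ is a left Mikado braid in the sense of \cite{DG}; such braids, together with the right Mikado braids $\bx^{-1}\by$, are exactly the lifts attached to biclosed sets that are inversion sets or complements thereof (e.g.\ $\bx\by^{-1} = (xy^{-1})_{N(y)}$ when $\ell(x) = \ell(xy^{-1})+\ell(y)$), which is why the present method yields only these two families.

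The heart of the proof is to show that the minimal representative in $\Kb$ of $F := F_{\bx}\otimes F_{\by}^{-1}$ is \emph{linear}: its term $F^i$ in cohomological degree $i$ is a direct sum $\bigoplus_{w\in\W} B_w(i)^{\oplus m_w^i}$ with $m_w^i\in\mathbb{Z}_{\geq 0}$, the grading shift equalling the cohomological degree throughout. This generalises Elias and Williamson's linearity theorem \cite{EW} for the Rouquier complexes $F_{\bw}$ of positive canonical lifts of elements of $\W$. I would prove it by induction on $\ell(x)+\ell(y)$, building up $\bx\by^{-1}$ as a product $F_{s_1}^{\varepsilon_1}\otimes\cdots\otimes F_{s_k}^{\varepsilon_k}$ along a reduced expression $s_1\cdots s_k$ of $xy^{-1}$, so that all partial products are length-increasing in $\W$; the reductions that genuinely shorten the braid are formal, using $F_s\otimes F_s^{-1}\simeq R$. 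When one tensors a linear complex by $F_s^{\varepsilon}$, the summands $(B_zB_s)(i)$ with $zs<z$ produce a priori non-linear pieces $B_z(i\pm 1)$ in cohomological degree $i$; one must check that these cancel against the linear summands $B_z(i\pm1)$ in the adjacent degrees, i.e.\ that the relevant components of the differential are isomorphisms. For $\varepsilon = +1$ this is exactly Elias--Williamson's inductive step; the new case $\varepsilon = -1$ is handled by the same mechanism, the required non-degeneracy being again a consequence of the positivity of the local intersection forms of \cite{EW} (equivalently, of Soergel's conjecture together with hard Lefschetz for Soergel bimodules). This is the main obstacle: one has to verify that Elias--Williamson's Hodge-theoretic input still applies in the presence of inverse generators, which is exactly where the hypothesis that the biclosed set be (the complement of) an inversion set — equivalently, that the braid splits as a positive lift times the inverse of a positive lift — is used.

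Granting linearity, the conclusion is immediate. Decategorifying, using $\langle B_w\rangle = C_w'$ and $\langle B[1]\rangle = -\langle B\rangle$, gives
\[ H_xH_y^{-1} \;=\; \langle F\rangle \;=\; \sum_i(-1)^i\langle F^i\rangle \;=\; \sum_{w\in\W}\Bigl(\sum_i m_w^i(-v)^i\Bigr)C_w'. \]
Let $\jmath$ be the ring involution of $\H$ determined by $\jmath(v) = -v^{-1}$ and $\jmath(H_s) = H_s$ for $s\in\S$; then $\jmath(H_w) = H_w$ for all $w\in\W$, and $\jmath$ is Kazhdan--Lusztig's involution interchanging the two canonical bases, so $\jmath(C_w') = C_w$. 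Since $\jmath$ fixes $H_xH_y^{-1}$, applying it to the identity above and using $\jmath(-v) = v^{-1}$ yields
\[ H_xH_y^{-1} \;=\; \sum_{w\in\W}\Bigl(\sum_i m_w^i\,v^{-i}\Bigr)C_w, \]
where every coefficient lies in $\mathbb{Z}_{\geq 0}[v^{-1}]\subseteq\mathbb{Z}_{\geq 0}[v^{\pm 1}]$. Multiplying by $v^{\ell(y)-\ell(x)}$ gives $T_xT_y^{-1}\in\sum_w\mathbb{Z}_{\geq 0}[v^{\pm 1}]C_w$, and the anti-automorphism above then delivers the statement for $T_x^{-1}T_y$.
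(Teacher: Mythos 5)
Your global strategy (realise $H_xH_y^{-1}$ as the class of a Rouquier complex of a Mikado braid, prove linearity of its minimal complex, decategorify) is the same as the paper's, but the central step --- linearity --- is exactly where your argument has a genuine gap. You assert that the Elias--Williamson inductive step carries over to tensoring with $E_s=F_s^{-1}$ ``by the same mechanism'', the needed non-degeneracy coming from intersection-form positivity. However, the mechanism of \cite{EW} is not a bare cancellation-of-differentials argument: it rests on (i) the Libedinsky--Williamson computation of the $\Gamma$-subquotients of the Rouquier complex and (ii) the one-sided degree bound on the standard ($\Delta$-)character of the indecomposables $B_w$ (for $z<w$ only strictly positive shifts occur), which is what forces every summand $B_z(j)$ of ${}^iF_w$ to be linked to a summand with strictly larger shift in degree $i+1$. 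In your induction along a lifted reduced expression, the partial products are Mikado braids attached to finite biclosed sets $N(y')$, and for such $A$ the $A$-twisted character of $B_w$ is in general \emph{not} one-sidedly bounded (the polynomials $h^A_{x,w}$ of Theorem~\ref{threeparam} are nonnegative but have both positive and negative powers of $v$), so the degree control that drives the Elias--Williamson step is unavailable at the intermediate stages. This is precisely the obstruction pointed out in the introduction of the paper (the properties of standard filtrations used by Elias--Williamson fail for twisted filtrations), and it is the reason the general Mikado case is left as a conjecture; so ``the same mechanism'' does not apply, and your linearity claim is unproved as it stands.

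The paper takes a different route around this: it first proves a dual of Theorem~\ref{thm:ewlin} for the full negative lifts, namely that $E_w\in\Kbp\cap\Kbn$ (Proposition~\ref{thm:costand}), by dualizing the Elias--Williamson argument with respect to the costandard filtration $A=\T$, whose character of $B_w$ \emph{is} one-sidedly (negatively) bounded, using the generalized Libedinsky--Williamson formula (Proposition~\ref{prop:libwill}). Linearity of the mixed complex $E_{x^{-1}}F_y$ then follows purely formally from the one-sided stability lemmas (Lemma~\ref{lem:fses}): $F_y\in\Kbn$ and left multiplication by the $E_s$ preserves $\Kbn$, while $E_{x^{-1}}\in\Kbp$ and right multiplication by the $F_s$ preserves $\Kbp$; no analysis of cancellations inside a mixed word and no new Hodge-theoretic input is needed. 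By contrast, the final part of your proposal is correct and even slightly cleaner than the paper's: the semilinear involution $\jmath$ with $\jmath(v)=-v^{-1}$, $\jmath(H_s)=H_s$ commutes with the bar involution and so sends $C'_w$ to $C_w$, and applying it to $H_xH_y^{-1}=\sum_w\bigl(\sum_i m_w^i(-v)^i\bigr)C'_w$ does yield nonnegative coefficients in the $C_w$-basis, bypassing the parity induction on the $m_{z,i}$ carried out in the paper; the reduction between the two statements via the anti-automorphism $T_w\mapsto T_{w^{-1}}$ is also fine (and note that $\bx\by^{-1}=(xy^{-1})_{N(y)}$ holds with no length-additivity hypothesis, by Lemma~\ref{lem:Mikado}). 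But these correct steps only become a proof once the linearity of the mixed minimal complex is actually established, as in Theorem~\ref{thm:linearity}.
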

This corresponds to the case where $T_{x,A}$ comes from a left or right Mikado braid. In particular, (\ref{p1}) and the analogue version for left Mikado braids hold for arbitrary Coxeter groups. 

The strategy for the proof of the above theorem reproduces that of Elias and Williamson to prove the positivity of inverse Kazhdan Lusztig polynomials \cite{EW}. More precisely, we prove that any minimal Rouquier complex (i.e., a complex associated to a left or right Mikado braid in the bounded homotopy category of Soergel bimodules with no contractible direct summand) is "linear", that is, that any direct summand in homological degree $i$ of a miminal complex is isomorphic to $B_w(i)$ for some $w\in\W$ (Theorem \ref{thm:linearity}). Moreover, proving by induction that some $B_w$ cannot appear in certain degrees depending on parity conditions, we can conclude that (\ref{p5}) holds for left and right Mikado braids (Theorem \ref{thm:inversepos}). The key steps in the proof of Elias and Williamson are to use an explicit formula of Libedinsky-Williamson \cite{LW} which gives the standard filtration of a Rouquier complex of a positive permutation braid and then to use properties of the standard filtration to control the shifts. Libedinsky-Williamson's formula generalizes to the level of arbitrary Mikado braids (Proposition \ref{prop:libwill}; Libedinsky-Williamson's proof adapts mutatis mutandis), but unfortunately the used properties of the standard filtrations fail in general. The linearity of minimal complexes associated to left and right Mikado braids (Theorem \ref{thm:linearity}) is proven by combining Elias and Williamson's linearity statement and a dual version of it for negative canonical lifts of element of the Coxeter group in the Artin-Tits group.\\ 
~\\
{\bf Acknowledgements}. I thank Geordie Williamson and Wolfgang Soergel for discussions which initiated this work during the Darstellungstheorie Schwerpunkttagung in Bad Honnef in March 2015; the stay in Bad Honnef was funded by the DFG Schwerpunktprogramm Darstellungstheorie 1388 which I also thank. Special thanks go to Matthew Dyer for discussions and comments on a preliminary version, for making me aware of Corollary \ref{cor:doubletwist} and for kindly detailing me some generalizations of his results of \cite{Dyerrep}.

\section{Bruhat-like orders on a Coxeter group}

In this section, we introduce the Coxeter group terminology and twisted Bruhat orders on Coxeter groups. 
\subsection{Coxeter groups terminology}
Let $(\W,\S)$ be a Coxeter system with $\S$ finite. We denote by $\T$ the set of \defn{reflections} of $\W$, that is, the set $\bigcup_{w\in\W} w\S w^{-1}$. Let $\ell:\W\rightarrow\mathbb{Z}_{\geq 0}$ be the length function with respect to $\S$. Let $\leq$ denote the Bruhat order on $\W$. Recall its definition: for $u,v\in\W$, one has $u\leq v$ if and only if there exists a  sequence $u=u_0, u_1,\dots, u_k=v$, $u_i\in\W$ such that for any $i=0,\dots, k-1$, $\ell(u_{i+1})>\ell(u_i)$ and $u_{i+1}=u_i t_i$ for some $t_i\in\T$. For more on the general theory of Coxeter groups we refer the reader to \cite{bourbaki}, \cite{Humph}. 

The power set $\mathscr{P}(\T)$ may be seen as an abelian group under symmetric difference which we denote by $+$. Dyer characterized Coxeter groups as follows \cite[Lemmas 1.2 and 1.3]{Dyerth}:

\begin{proposition}[Dyer]\label{prop:dyercaract}
Let $\W$ be a group generated by a set $\S$ of involutions and $\T$ be the set of conjugates of elements of $\S$. Then the following are equivalent
\begin{enumerate}
\item $(\W,\S)$ is a Coxeter system,
\item There exists a function $N:\W\longrightarrow\mathscr{P}(\T)$ satisfying
\begin{enumerate}
\item $N(s)=\{s\}$ for any $s\in\S$,
\item $N(xy)=N(x)+x N(y) x^{-1}$ for all $x,y\in\W$.
\end{enumerate}
\end{enumerate}
Moreover, if such a function $N$ exists, then it is unique and for any $w\in\W$ we have $$N(w)=\{t\in\T~|~\ell(tw)<\ell(w)\}.$$
\end{proposition}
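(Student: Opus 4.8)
The plan is to prove Proposition~\ref{prop:dyercaract} by establishing the two implications separately, together with the uniqueness and the explicit formula for $N$.

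\medskip

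\emph{From (1) to (2), and the formula.} Assume $(\W,\S)$ is a Coxeter system. I would \emph{define} $N(w):=\{t\in\T \mid \ell(tw)<\ell(w)\}$ and verify that this set has the two required properties. Property (a) is immediate: for $s\in\S$ and $t\in\T$, the standard exchange/deletion properties of Coxeter groups give $\ell(ts)<\ell(s)=1$ only when $t=s$, so $N(s)=\{s\}$. Property (b), the cocycle identity $N(xy)=N(x)+xN(y)x^{-1}$, is the heart of this direction. The clean way is to use the well-known description of $N(w)$ via a reduced expression: if $w=s_1\cdots s_k$ is reduced, then $N(w)=\{s_1\cdots s_{i-1} s_i s_{i-1}\cdots s_1 \mid 1\le i\le k\}$, and these $k$ reflections are pairwise distinct (this is a standard consequence of the strong exchange condition; I would cite \cite{bourbaki} or \cite{Humph}). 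Concatenating reduced expressions for $x$ and $y$ need not give a reduced expression for $xy$, so I would instead argue by induction on $\ell(y)$: for $y=e$ the identity is trivial, and for the inductive step write $y=y's$ with $\ell(y)=\ell(y')+1$, $s\in\S$, reducing the claim to the single-generator case $N(xs)=N(x)+xN(s)x^{-1}=N(x)+\{xsx^{-1}\}$, i.e.\ to showing that $N(xs)$ and $N(x)$ differ exactly in the element $xsx^{-1}$. This in turn follows from the two cases $\ell(xs)=\ell(x)+1$ and $\ell(xs)=\ell(x)-1$ by a direct check using the exchange condition to compare $\ell(txs)$ with $\ell(tx)$ for $t\in\T$.

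\medskip

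\emph{From (2) to (1).} Conversely, suppose $\W$ is generated by a set $\S$ of involutions, $\T=\bigcup_w w\S w^{-1}$, and a function $N$ with properties (a) and (b) exists. The goal is to show $(\W,\S)$ satisfies a defining property of Coxeter systems; the most convenient is the \emph{exchange condition} or, equivalently, that $\W$ has a presentation by the braid relations determined by the orders of the products $st$. From (b) one first deduces $N(e)=\emptyset$ (take $x=y=e$) and $N(x^{-1})=x^{-1}N(x)x^{-1}\cdot(-1)$-type relations; more precisely $N(x)+xN(x^{-1})x^{-1}=N(e)=\emptyset$, so $N(x^{-1})=x^{-1}N(x)x$. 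Then, writing any $w$ as a product $s_1\cdots s_k$ of generators and iterating (b), one gets $N(w)\subseteq\{s_1\cdots s_{i-1}s_is_{i-1}\cdots s_1\}_{i}$, with equality iff this word is of minimal length; in general $|N(w)|\equiv k\pmod 2$ and $|N(w)|\le k$, so $|N(w)|$ is a well-defined length-type function. Defining $\ell(w):=|N(w)|$, one checks $\ell(ws)=\ell(w)\pm 1$ and then runs the classical argument (as in Dyer \cite{Dyerth} or Bourbaki): if $s_1\cdots s_k$ is a non-reduced word, some reflection $t=s_1\cdots s_is_{i-1}\cdots s_1$ is repeated, which yields an exchange $s_1\cdots s_k = s_1\cdots \widehat{s_i}\cdots \widehat{s_j}\cdots s_k$; the exchange condition then forces $(\W,\S)$ to be Coxeter.

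\medskip

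\emph{Uniqueness.} Uniqueness of $N$ given (1): any $N$ satisfying (a) and (b) must, by the inductive argument above (decomposing $w=s_1\cdots s_k$ and applying (b) repeatedly starting from $N(e)=\emptyset$), be completely determined on all of $\W$, hence equals the set $\{t\in\T\mid \ell(tw)<\ell(w)\}$.

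\medskip

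\textbf{Main obstacle.} The technical crux is the cocycle identity (b) in the direction (1)$\Rightarrow$(2), specifically handling the fact that products of reduced words are rarely reduced; the induction on $\ell(y)$ reducing to the single-generator step, combined with careful bookkeeping of how $N$ changes under right multiplication by a generator, is where the real work lies. In the reverse direction the subtle point is extracting a genuine \emph{reduced-word} combinatorics (and hence the exchange condition) purely from the abstract existence of $N$, without yet knowing $\W$ is Coxeter — here one must be careful that $\ell(w):=|N(w)|$ really behaves like a length function, which again rests on property (b).
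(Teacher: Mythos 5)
This proposition is quoted by the paper from Dyer's thesis \cite[Lemmas 1.2 and 1.3]{Dyerth}; the paper itself gives no proof, so there is no internal argument to compare against. Your outline is a correct rendition of the standard (essentially Dyer's) argument: define $N(w)=\{t\in\T\mid \ell(tw)<\ell(w)\}$, reduce the cocycle identity to the single-generator step $N(ws)=N(w)+\{wsw^{-1}\}$ (which does follow by comparing $\ell(tws)$ with $\ell(tw)$ via the \emph{strong} exchange condition, since $t$ ranges over all reflections), and conversely iterate property (2b) to see that $N(s_1\cdots s_k)\subseteq\{s_1\cdots s_i\cdots s_1\}_i$ with $|N(w)|\leq k$ of the right parity, so that for a non-reduced word some reflection repeats, yielding the deletion condition and hence that $(\W,\S)$ is Coxeter; uniqueness and the explicit formula then follow by induction on word length from (2a), (2b) and $N(e)=\emptyset$. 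The only points needing care in a complete write-up are the two you flag yourself, and both are handled correctly in your sketch.
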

Starting from a Coxeter system $(\W,\S)$ and $w\in\W$ the set $N(w)$ as defined in the above Proposition is the \defn{(left) inversion set} of $w$. One has $|N(w)|=\ell(w)$.

\subsection{Twisted Bruhat orders}\label{sub:twisted}
We recall from \cite{Dyershellings2}, \cite{Dyer2} some facts about the twisted Bruhat orders on $\W$. Let $y\in\W$. One defines the \defn{$y$-twisted Bruhat order} $\leq_y$ by $$u\leq_y v~~\Leftrightarrow ~~uy\leq vy.$$
There are several equivalent characterizations of such an order, see [\cite{Dyer2}, 1.11]. For instance, it can be characterized as follows: define a $\mathbb{Z}$-valued length function $\ell_{y}:\W\rightarrow\mathbb{Z}$ by $$\ell_{y}(w)=\ell(w)-2|N(w^{-1})\cap N(y)|.$$
Then by \cite[1.11-1.12]{Dyer2} we have $u\leq_y v$ if and only if there exists a sequence $u=u_0, u_1,\dots, u_k=v$, $u_i\in\W$ such that for any $i=0,\dots, k-1$, $\ell_y(u_{i+1})>\ell_y(u_i)$ and $u_{i+1}=u_i t_i$ for some $t_i\in\T$.

Note that $\ell_y(e)=0$ for any $y\in\W$. The order $\leq_y$ has a unique minimal element given by $y^{-1}$. 

\begin{remark}\label{remark:reversed}
If $\W$ is finite and $u,v\in\W$, then $u \leq_{w_0} v$ if and only if $v\leq u$ (see \cite[Proposition 2.3.4]{BjBr}). Hence $\leq_{w_0}$ is the reverse Bruhat order. In that case $\ell_{w_0}(u)=-\ell(u)$ for any $u\in\W$. In case $\W$ is not finite, the reverse Bruhat order is not caught by a twisted order. This is one of the reasons for introducing a more general version of these orders in the next subsection. 
\end{remark}

\subsection{Order attached to a biclosed set of roots}
Dyer introduced more general versions of the twisted Bruhat orders as defined in Subsection \ref{sub:twisted} (see for instance \cite[Section 1]{Dyershellings2}). Given a Coxeter system $(\W,\S)$, denote by $V$ its reflection representation over $\mathbb{R}$ and by $\Phi$ the associated root system with positive system $\Phi^+\subseteq\Phi$. 

A subset $A\subseteq \T$ is \defn{closed} if the set $\Phi_A\subseteq \Phi^+$ of roots corresponding to $A$ has the following property: given $\alpha, \beta\in\Phi_A$, then any positive root lying in $\mathbb{R}_{>0}\alpha+\mathbb{R}_{>0}\beta$ lies again in $\Phi_A$. A subset $A\subseteq T$ is \defn{biclosed} if $A$ and $\T\backslash A$ are closed. Using \cite[Remark 3.2]{DyerBru}, this is equivalent to saying that for any dihedral reflection subgroup $\W'\subseteq \W$, the set $A\cap \W'$ is an initial section of a reflection order (we refer to \cite{Dyershellings2} and the references therein for definitions). We denote by $\mathscr{B}(\Phi^+)$ the set of biclosed sets of roots.

The following definition is given by Dyer \cite{Dyershellings2} for sets $A\subseteq\T$ which are initial sections of reflection orders and was extended to biclosed sets by Edgar \cite{Edgar}. Initial sections of reflection orders turn out to be biclosed (see \cite[2.1]{Dyerweak}) and Dyer furthermore conjectures that these should be exactly the biclosed sets (see \cite[2.2]{Dyerweak}). 

To any set $A\subseteq \T$ of reflections, associate a preorder $\leq_A$ on $\W$ as follows: $u\leq_A v$ if and only if there exist $t_1,\dots, t_n\in T$ with $v=u t_1 \cdots t_n$ such that $t_i\notin (N((u t_1\cdots t_{i-1}^{-1})+A)$ for all $i=1,\dots, n$. One has as in Subsection \ref{sub:twisted} a corresponding length function $\ell_A:\W\rightarrow\mathbb{Z}$ defined by $$\ell_A(w):=\ell(w)-|N(w^{-1})\cap A|.$$

Edgar then showed \cite[Theorem 2.3]{Edgar}

\begin{theorem}[Edgar]\label{thm:edgar}
Let $A\subseteq\T$. The following are equivalent
\begin{enumerate}
\item $\leq_A$ is a partial order,
\item $A\in\Bic$,
\item $\ell_A(xt)<\ell_A(x)$ for all $x\in \W$, $t\in (N(x^{-1})+A)$.
\end{enumerate}
\end{theorem}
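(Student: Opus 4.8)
The plan is to establish the cycle of implications $(2)\Rightarrow(3)\Rightarrow(1)\Rightarrow(2)$, with the bulk of the work lying in $(2)\Rightarrow(3)$. Throughout I would use the cocycle identity $N(xy)=N(x)+xN(y)x^{-1}$ from Proposition \ref{prop:dyercaract} as the basic algebraic tool, together with the standard fact that for $t\in\T$ one has $\ell(xt)<\ell(x)$ if and only if $t\in N(x^{-1})$ (equivalently $xtx^{-1}\in N(x)$). First I would record the elementary computation of how $\ell_A$ changes under right multiplication by a reflection: writing $x'=xt$ with $t\in\T$, one gets $N((x')^{-1})=N(tx^{-1})=N(t)+tN(x^{-1})t$ and $\ell(x')=\ell(x)\pm 1$, so
$$
\ell_A(xt)-\ell_A(x)=\bigl(\ell(xt)-\ell(x)\bigr)-\bigl(|N(tx^{-1})\cap A|-|N(x^{-1})\cap A|\bigr),
$$
and a short case analysis (is $t\in N(x^{-1})$? is $t\in A$? is $t\in N(tx^{-1})$?) shows that $\ell_A(xt)-\ell_A(x)\in\{-1,+1\}$ always, with the sign governed by membership of $t$ in the symmetric difference $N(x^{-1})+A$. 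This normalization reduces $(3)$ to the single assertion: if $A$ is biclosed, then $t\in N(x^{-1})+A$ forces the sign to be $-1$ rather than $+1$.

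For $(2)\Rightarrow(3)$, the main obstacle—and the heart of the theorem—is ruling out infinite descending chains of the ``wrong'' kind; equivalently, showing $\ell_A$ genuinely decreases along the relevant covers. I would translate the biclosedness of $A$ into the statement about dihedral reflection subgroups via \cite[Remark 3.2]{DyerBru} quoted in the excerpt: for every dihedral reflection subgroup $\W'\leq\W$, the intersection $A\cap\W'$ is an initial section of a reflection order on $\W'$. The point is that the quantity $\ell_A(xt)-\ell_A(x)$ depends only on the ``dihedral data'' attached to the pair $(x,t)$—more precisely on the interaction of $A$ with the reflections of a suitable rank-two (dihedral) reflection subgroup determined by $x^{-1}$ and $t$. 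Reducing to the dihedral case, one can then verify $(3)$ by a direct computation inside infinite or finite dihedral groups, using that $A\cap\W'$ being an initial section of a reflection order pins down exactly which reflections can increase $\ell_A$. This dihedral reduction is the step I expect to be delicate: one must argue that only finitely many reflections are relevant and that the sign condition is insensitive to the rest of $A$.

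Given $(3)$, the implication $(3)\Rightarrow(1)$ is essentially formal: antisymmetry of $\leq_A$ follows because each step $u\rightsquigarrow ut_i$ in the defining chain for $u\leq_A v$ strictly increases $\ell_A$ (this is exactly the content of $(3)$ applied with $x=(ut_1\cdots t_{i-1})^{-1}$), so $u\leq_A v$ and $v\leq_A u$ force $\ell_A(u)=\ell_A(v)$ and hence $u=v$; transitivity and reflexivity are immediate from the definition by concatenating chains. Finally, for $(1)\Rightarrow(2)$ I would argue by contraposition: if $A$ is not biclosed, then either $A$ or $\T\setminus A$ fails to be closed, and the failure of closedness is again witnessed inside a dihedral reflection subgroup $\W'$, where $A\cap\W'$ is not an initial section of any reflection order. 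In that dihedral group one can then exhibit an explicit relation $u\leq_A v\leq_A u$ with $u\neq v$ (a small cycle of reflections along which $\ell_A$ returns to its starting value), showing $\leq_A$ is not antisymmetric and hence not a partial order. Assembling the three implications completes the proof.
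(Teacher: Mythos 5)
First, a point of comparison: the paper does not prove this statement at all — it is quoted from Edgar \cite[Theorem 2.3]{Edgar} (building on Dyer's treatment of initial sections in \cite{Dyershellings2}), so there is no in-paper argument to measure your proposal against; it has to stand on its own. As it stands it does not: the overall shape (a cycle of implications with a reduction to dihedral reflection subgroups) is indeed the strategy used in the literature, but the two substantive implications are asserted rather than proved. In $(2)\Rightarrow(3)$ the whole weight rests on the claim that $\ell_A(xt)-\ell_A(x)$ ``depends only on the dihedral data attached to the pair $(x,t)$'' in ``a suitable rank-two reflection subgroup determined by $x^{-1}$ and $t$''. There is no canonical dihedral subgroup attached to a pair consisting of a group element and a reflection, and the genuine reduction is not a one-line localization: it requires Dyer's reflection subgroup machinery (in particular that $N(w)\cap\W'$ is the inversion set, computed in $\W'$, of the image of $w$ under the canonical retraction onto a reflection subgroup $\W'$), together with an induction and an explicit verification in finite and infinite dihedral groups using the initial-section description of $A\cap\W'$. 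None of this is supplied, and you yourself flag the step as the delicate one. The same applies to $(1)\Rightarrow(2)$: the cycle $u\leq_A v\leq_A u$ with $u\neq v$ that is supposed to witness failure of antisymmetry when $A$ is not biclosed is not constructed, and constructing it is again the actual content of Edgar's argument.

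There is also a concretely false auxiliary claim: $\ell_A(xt)-\ell_A(x)\in\{-1,+1\}$ for every reflection $t$ is wrong already in the untwisted case $A=\emptyset$, where for $x=e$ and $t$ a non-simple reflection the difference equals $\ell(t)\geq 3$; the only general statement available is the parity one (cf.\ Remark \ref{rem:parity}), and in any case $(3)$ only concerns the sign, so this ``normalization'' buys nothing and signals a miscomputation. On the positive side, your $(3)\Rightarrow(1)$ is essentially correct once one adds the small observation that $t\notin N(x^{-1})+A$ if and only if $t\in N((xt)^{-1})+A$ (so that $(3)$, applied at $xt$, shows every step in a defining chain strictly increases $\ell_A$, giving antisymmetry); your parenthetical application of $(3)$ is stated with the wrong element, but that is a minor slip. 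In summary: the skeleton matches the known proof strategy, but the heart of the theorem — the dihedral reduction and the dihedral computations in both directions — is missing, so this is an outline rather than a proof.
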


Note that Edgar gives additional characterizations involving an analogue of the Bruhat graph -- we do not state them here since we will not require them. 

For $A\in \Bic$, using Edgar's Theorem we may define $\leq_A$ by $u\leq_A v$ if and only if there exists a sequence $u=u_0, u_1,\dots, u_k=v$, $u_i\in\W$ such that for any $i=0,\dots, k-1$, $\ell_A(u_{i+1})>\ell_A(u_i)$ and $u_{i+1}=u_i t_i$ for some $t_i\in\T$.

This generalizes the twisted Bruhat order because finite biclosed sets turn out to be exactly inversion sets of elements (see \cite[2.3]{Dyerweak}). Hence it follows from the above description that the $y$-twisted Bruhat order $\leq_y$ from Subsection \ref{sub:twisted} coincides with $\leq_{N(y)}$ for any $y\in \W$. In particular for finite Coxeter groups, we get nothing new. However for infinite groups one gets additional orders which may look very different from the twisted Bruhat orders as the following example shows

\begin{example}\label{ex:biclos}
Let $\W=\left\langle s, t\right\rangle$ be an infinite dihedral group with $\S=\{s,t\}$. Then the set $A:=\{s, sts, ststs, \dots\}$ lies in $\Bic$. One then has $$\dots~<_A tsts <_A sts <_A ts <_A s <_A e <_A t <_A st <_A tst <_A stst <_A \dots,$$
that is, $<_A$ is a total order in that case. 
\end{example}

The set $\Bic$ can be much bigger in general than $\{N(w)~|~w\in\W\}$ in the case where $\W$ is infinite. For more on the combinatorics of $\Bic$ we refer to \cite{Dyerweak}, \cite{HL}.


We can summarize the characterizations of the cover relations in $\leq_A$ as follows:

\begin{lemma}\label{lemma:edgar}
Let $A\in\Bic$, $x\in\W$, $t\in\T$. The following are equivalent
\begin{enumerate}
\item $xt <_A x$,
\item $\ell_A(xt)<\ell_A(x)$,
\item $t\in (N(x^{-1})+A)$.
\end{enumerate}
\end{lemma}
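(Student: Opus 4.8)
The statement to prove is Lemma~\ref{lemma:edgar}, which asserts the equivalence of the three conditions $xt<_A x$, $\ell_A(xt)<\ell_A(x)$, and $t\in(N(x^{-1})+A)$ for $A\in\Bic$, $x\in\W$, $t\in\T$.

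\medskip

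\emph{Proof proposal.} The plan is to deduce everything from Edgar's Theorem~\ref{thm:edgar} together with the characterization of the cover relation in $\leq_A$ recorded just before the lemma; almost no new work should be needed, the point being merely to unwind the definitions. First I would establish $(2)\Leftrightarrow(3)$. The implication $(3)\Rightarrow(2)$ is literally item~(3) of Theorem~\ref{thm:edgar} applied with the roles matched up: for $A\in\Bic$ we have $\ell_A(xt)<\ell_A(x)$ whenever $t\in(N(x^{-1})+A)$. For the converse $(2)\Rightarrow(3)$, I would argue by contraposition: if $t\notin(N(x^{-1})+A)$, then replacing $x$ by $x':=xt$ we have $t\in N((x')^{-1})+A$ if and only if $t\notin N(x^{-1})+A$ — because $N((xt)^{-1})=N(tx^{-1})=\{t\}+tN(x^{-1})t$ and one checks $t\in\{t\}+tN(x^{-1})t\iff t\notin N(x^{-1})$, so symmetric-difference with $A$ flips membership of $t$ — hence $t\in N((x')^{-1})+A$ and Theorem~\ref{thm:edgar}(3) gives $\ell_A(x'\,t)<\ell_A(x')$, i.e. $\ell_A(x)<\ell_A(xt)$, contradicting $(2)$. (One also uses that $\ell_A$ never takes equal values on $x$ and $xt$, which follows from the parity/length considerations: $\ell(xt)\neq\ell(x)$ since $t$ is a reflection, and the correction term $|N((xt)^{-1})\cap A|$ and $|N(x^{-1})\cap A|$ differ by exactly $1$ in absolute value, so $\ell_A(xt)-\ell_A(x)$ is odd, in particular nonzero.)

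\medskip

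Next I would handle $(1)\Leftrightarrow(2)$. For $A\in\Bic$, Edgar's Theorem gives that $\leq_A$ is a partial order, and the excerpt defines $\leq_A$ on such $A$ by: $u\leq_A v$ iff there is a chain $u=u_0,\dots,u_k=v$ with each step $u_{i+1}=u_it_i$, $t_i\in\T$, and $\ell_A(u_{i+1})>\ell_A(u_i)$. The implication $(2)\Rightarrow(1)$ is immediate: the one-step chain $xt, x$ witnesses $xt<_A x$ since $\ell_A(xt)<\ell_A(x)$ and $x=(xt)t$. For $(1)\Rightarrow(2)$, suppose $xt<_A x$. Since $x$ and $xt$ differ by the single reflection $t$, this is a cover relation; I would show directly that $\ell_A(xt)<\ell_A(x)$. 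Indeed $xt<_A x$ together with $x\leq_A x$ (trivially) and the fact that $\leq_A$ restricted to the coset $\{x,xt\}$ is a genuine partial order forces the length inequality: if instead $\ell_A(xt)>\ell_A(x)$ held, then by the same definitional chain criterion applied with $u=x$, $v=xt$ (one-step, since $xt=xt$) we would get $x<_A xt$, and by antisymmetry of the partial order $\leq_A$ this contradicts $xt<_A x$. Since $\ell_A(xt)\neq\ell_A(x)$ (odd difference, as above), the remaining case $\ell_A(xt)<\ell_A(x)$ must hold, which is $(2)$.

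\medskip

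I do not expect a genuine obstacle here; the lemma is essentially a bookkeeping consolidation of Theorem~\ref{thm:edgar} and the definition of $\leq_A$. The one point requiring a little care — and the only place a reader might want detail — is the verification that $\ell_A(xt)-\ell_A(x)$ is always odd (hence nonzero), since this is what makes the trichotomy collapse to the stated dichotomy; this follows from $\ell_A(w)=\ell(w)-|N(w^{-1})\cap A|$, from $\ell(xt)\equiv\ell(x)+1\pmod 2$, and from the elementary computation of $N((xt)^{-1})$ in terms of $N(x^{-1})$ showing the two intersection-cardinalities differ by exactly one. A secondary small subtlety is justifying that a single-step chain suffices in the definition of $\leq_A$ when $u,v$ differ by one reflection with the correct length inequality — but this is immediate from the definition. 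Assembling $(1)\Leftrightarrow(2)\Leftrightarrow(3)$ then completes the proof.
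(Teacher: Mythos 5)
The paper offers no proof of this lemma at all --- it is presented as a summary of Edgar's Theorem~\ref{thm:edgar} and the definition of $\leq_A$ --- and your proposal fills in the details along exactly the intended lines: $(2)\Leftrightarrow(3)$ by applying Theorem~\ref{thm:edgar}(3) to $x$ and to $xt$, and $(1)\Leftrightarrow(2)$ from the chain definition of $\leq_A$. The argument is correct in substance, but two of your auxiliary claims are literally false when the reflection $t\in\T$ is not simple, and both should be repaired. First, $N(tx^{-1})=N(t)+tN(x^{-1})t$, not $\{t\}+tN(x^{-1})t$: the set $N(t)$ has $\ell(t)$ elements. All you actually need is $t\in N(t)$ (true, since $\ell(t\cdot t)=0<\ell(t)$), which already gives $t\in N((xt)^{-1})\iff t\notin N(x^{-1})$ and hence the flip of membership in $N(\cdot)+A$, so the step survives. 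Second, $|N((xt)^{-1})\cap A|$ and $|N(x^{-1})\cap A|$ do not in general differ by exactly $1$, again because $t$ need not be simple. The oddness of $\ell_A(xt)-\ell_A(x)$ instead comes from the fact that the correction term in $\ell_A$ is $2|N(w^{-1})\cap A|$, hence even, so $\ell_A(w)\equiv\ell(w)\pmod 2$ while $\ell(xt)\not\equiv\ell(x)\pmod 2$; this is the content of Remark~\ref{rem:parity} (the factor $2$ is missing from the displayed definition of $\ell_A$ in the paper, but it is the consistent reading, as Subsection~\ref{sub:twisted} and the identity $\sum_i\varepsilon_i=\ell_A(x)$ show).

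Two further small observations. Your implication $(1)\Rightarrow(2)$ is more immediate than the antisymmetry detour you take: $\ell_A$ strictly increases along any chain witnessing $xt<_A x$, so $\ell_A(xt)<\ell_A(x)$ follows directly from the definition; in particular you do not need $xt<_A x$ to be a cover relation, and indeed your assertion that it must be one is unjustified (and unused). None of this affects the correctness of the overall proof.
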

\begin{remark}\label{rem:parity}
Given $x\in\W$, $t\in\T$, by definition of $\ell_A$ we have that $\ell_A(x)$ and $\ell_A(tx)$ have distinct parities. By Lemma \ref{lemma:edgar} it follows that either $tx <_A x$ or $tx >_A x$.
\end{remark}

\begin{remark}\label{rmq:distanceun}
For later use we shall mention that if $x\in\W$, $s\in \S$, then $sx <_A x$ if and only if $\lA(sx)=\lA(x)-1$. 
\end{remark}
Notice that the property in the above remark fails in general if me multiply $x$ by $s$ on the right (see Example~\ref{ex:biclos}). The following well-known property of the Bruhat order will turn out to be crucial when introducing twisted filtrations of Soergel bimodules in Section \ref{sec:twistedfiltrations}:

\begin{proposition}[Deodhar's "Z"-property]\label{prop:z}
Let $A\in\mathscr{B}(\Phi^+)$. Let $x,y\in \W$, $s\in\S$ such that $sx\leq_A x$, $sy\leq_A y$. Then $x\leq_A y$ iff $sx\leq_A y$ iff $sx\leq_A sy$.
\end{proposition}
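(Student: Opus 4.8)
The plan is to mimic the standard proof of Deodhar's $Z$-property for the Bruhat order, working with the twisted length function $\ell_A$ in place of $\ell$, and exploiting Remark~\ref{rmq:distanceun} to handle left multiplication by $s$. First I would observe that by Remark~\ref{rem:parity}, for any $z\in\W$ exactly one of $sz<_A z$ or $sz>_A z$ holds, and by Remark~\ref{rmq:distanceun} the hypotheses $sx<_A x$, $sy<_A y$ are equivalent to $\ell_A(sx)=\ell_A(x)-1$ and $\ell_A(sy)=\ell_A(y)-1$. So I set $x'=sx$, $y'=sy$, and the goal is the chain of equivalences $x\leq_A y \Leftrightarrow x'\leq_A y \Leftrightarrow x'\leq_A y'$, under the assumption that $x$ (resp.\ $y$) is the larger of the pair $\{x,x'\}$ (resp.\ $\{y,y'\}$) in the $\leq_A$-order.

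The key inductive tool I would set up is a \emph{lifting property} for $\leq_A$: if $u<_A v$ is a cover relation, $s\in\S$, and $su<_A u$, then $su\leq_A v$, with equality of $\leq_A$-"distance" controlled by whether $sv<_A v$ or $sv>_A v$. Concretely I expect the statement: \emph{if $u\leq_A v$, $su<_A u$ and $sv>_A v$, then $su\leq_A v$ and $u\leq_A sv$}; and \emph{if $u\leq_A v$ with $su<_A u$ and $sv<_A v$, then $su\leq_A sv$}. These would be proved by induction on, say, $\ell_A(v)-\ell_A(u)$ (a nonnegative integer by definition of $\leq_A$), peeling off one cover relation $v'<_A v$ with $v=v't$, $t\in\T$, and splitting into cases according to the position of $sv'$ relative to $v'$, exactly as in the classical argument (cf.\ \cite{BjBr}, proof of the lifting property). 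At the base of the induction one uses the dihedral case: the restriction of $\leq_A$ to a dihedral reflection subgroup is the order coming from an initial section of a reflection order (by the characterization recalled after Theorem~\ref{thm:edgar} via \cite[Remark 3.2]{DyerBru}), where the $Z$-property is a direct check. Once the lifting property is available, the three-term equivalence follows formally: $x'\leq_A x$ gives one implication of $x\leq_A y\Rightarrow x'\leq_A y$ by transitivity; conversely from $x'\leq_A y$, $sx'=x>_A x'$ and $sy<_A y$ the lifting property yields $x=sx'\leq_A y$; and the equivalence $x'\leq_A y\Leftrightarrow x'\leq_A y'$ is the same lifting statement applied on the $y$-side.

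The main obstacle I anticipate is that the combinatorics of $\leq_A$ for a general biclosed $A$ is genuinely more delicate than for $\leq_y$: one no longer has a global group element $y$ to conjugate everything back to the untwisted Bruhat order, so the classical reduction "$u\leq_y v\Leftrightarrow uy\leq vy$" is unavailable and every step must be carried out intrinsically with $\ell_A$ and the cover-relation description from Lemma~\ref{lemma:edgar}. In particular, verifying that the case analysis in the induction is exhaustive requires careful use of Remark~\ref{rem:parity} (to know $sv'$ is comparable to $v'$) together with the fact, from Remark~\ref{rmq:distanceun}, that left multiplication by a simple reflection always changes $\ell_A$ by exactly $\pm1$ — this is what rescues the parity bookkeeping that makes the Bruhat-order proof work. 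A secondary subtlety is ensuring that the "distance" $\ell_A(v)-\ell_A(u)$ is the right induction parameter and that it is strictly decreased when we replace $v$ by a lower cover $v'$; this is immediate from condition (3) of Theorem~\ref{thm:edgar}. Modulo these points, the argument is a faithful transcription of the untwisted proof, and I would present it at that level of detail rather than re-deriving each dihedral sub-case.
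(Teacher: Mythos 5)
There is a genuine gap, on two levels. First, the lifting statements you actually write down are not the ones the Z-property needs: under the hypotheses $su<_A u$, $sv>_A v$ both conclusions $su\leq_A v$ and $u\leq_A sv$ follow trivially by transitivity, and your second statement ($su<_A u$, $sv<_A v$ $\Rightarrow$ $su\leq_A sv$) is itself one of the implications to be proved, not a tool. The only nontrivial content is the case you invoke at the very end but never formulate or prove: $u\leq_A v$, $su>_A u$, $sv<_A v$ $\Rightarrow$ $su\leq_A v$ and $u\leq_A sv$ (with $u=sx$, $v=y$ this is exactly $sx\leq_A y\Rightarrow x\leq_A y$ and $sx\leq_A y\Rightarrow sx\leq_A sy$). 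Second, for that statement your induction on $\ell_A(v)-\ell_A(u)$, peeling off one step $v=v't$ of a chain, breaks down precisely in the hard case $sv'>_A v'$, where the induction hypothesis does not apply to the pair $(u,v')$; saying this is handled ``exactly as in the classical argument'' does not work, because the proof of the lifting property in \cite{BjBr} rests on the subword property and reduced-expression combinatorics of the untwisted Bruhat order, and no analogue of the subword property for $\leq_A$ is available in this paper. Likewise, the dihedral reduction you propose is not the base case of your induction (that base case, $\ell_A(u)=\ell_A(v)$, is vacuous), and knowing that $A\cap \W'$ is an initial section for every dihedral reflection subgroup $\W'$ does not by itself tell you that the restriction of $\leq_A$ to cosets of $\W'$ is the corresponding dihedral twisted order; that is further Dyer machinery not established here.

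For comparison, the paper's proof (following Dyer) needs no induction and no lifting lemma: given $sx\leq_A y$, choose a chain $sx=yt_1\cdots t_n<_A\cdots<_A yt_1<_A y$, rewrite it with left multiplications $s_i:=yt_iy^{-1}$, and multiply the entire chain by $s$. Using Lemma \ref{lemma:edgar} (descent $\Leftrightarrow$ $t\in N(q^{-1})+A$) together with the cocycle identity $N(xy)=N(x)+xN(y)x^{-1}$ of Proposition \ref{prop:dyercaract}, one shows each step of the new chain is still a descent unless $s$ coincides with some $s_i$, and in that case the two chains can be spliced at the maximal such index; either way $x\leq_A y$ follows. If you want to salvage your route, you would have to prove the missing lifting statement, and the natural way to do so is essentially to reproduce this chain-conjugation argument, so nothing is gained by the detour.
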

Notice that the above Proposition is shown by Dyer for sets which are initial sections of reflection orders (see \cite[Proposition 1.9]{Dyershellings2}) which as mentioned at the beginning of the subsection are conjectured to be exactly the biclosed sets. The proof below is similar to Dyer's one, which generalizes to $\leq_A$. 
\begin{proof}
Assume that $x\leq_A y$. Then $sx\leq_A y$. Conversely, assume that $sx\leq_A y$. By definition of $\leq_A$ and Lemma \ref{lemma:edgar}, there exist $t_1,\dots, t_n\in\T$ such that 
$$sx=y t_1\cdots t_n <_A \dots <_A y t_1 <_A y.$$ Setting $s_i:= y t_i y^{-1}$ we get 
$$sx=s_1\cdots s_ny <_A \dots  <_A s_1 y <_A y.$$
Replacing $s_i$ by $s_1 s_2\cdots s_i\cdots s_1$ we can assume that 
$$sx=s_n\cdots s_1y <_A \dots  <_A s_1 y <_A y.$$
We claim that if $s\neq s_i$ for $i=1,\dots, n$, then $s s_i\cdots s_1y <_A s s_{i-1}\cdots s_1 y$. Indeed, assume that $s s_i\cdots s_1y >_A s s_{i-1}\cdots s_1 y$. Set $q:=s_{i-1}\cdots s_1 y$, $t:= q^{-1} s_i q$. Then the above condition becomes $sqt >_A sq$. By Lemma \ref{lemma:edgar}, it implies that $t\notin (N(q^{-1}s)+A)$. Firstly, assume that $t\notin N(q^{-1}s)$ and $t\notin A$. By Lemma \ref{prop:dyercaract} (2b), it follows that 
\begin{equation}\label{cond:t}
t\notin N(q^{-1})+ q^{-1} \{s\} q.
\end{equation}
But we have $s_i q=q t <_A q$ by the above sequence, hence $t\in (N(q^{-1})+A)$ by Lemma \ref{lemma:edgar}. Since $t\notin A$ it follows that $t\in N(q^{-1})$. Together with condition \ref{cond:t} it forces $t\in q^{-1}\{s\} q$, hence $t=q^{-1} s q$, hence $s_i=s$. If $t\in N(q^{-1}s)\cap A$ we similarly show that $s=s_i$. 

It follows that if $s\neq s_i$ for all $i$, then $$sx<_A x=ss_n\cdots s_1 y <_A \dots <_A ss_1 y <_A sy <_A y,$$
hence $x\leq_A y$. Hence assume that $s=s_i$ for some $i$, and take $i$ to be maximal with $s=s_i$. Then 
$$x= s s_n\cdots s_1 y<_A \dots <_A  s s_{i+1}\cdots s_1y <_A s s_{i}\cdots s_1y = s_{i-1}\cdots s_1 y <_A y,$$
hence $x\leq_A y$. Similarly we have $sx\leq_A y$ if and only if $sx\leq_A sy$.

\end{proof}
For $s\in\S$, we set $$R_{A,s}^{\uparrow}:=\{x\in\W~|~sx>_A x\},~R_{A,s}^{\downarrow}:=\{x\in\W~|~sx<_A x\}.$$
Note that $R_{A,s}^{\uparrow}=s R_{A,s}^{\downarrow}$, $\W=R_{A,s}^{\uparrow}\overset{\cdot}{\cup} R_{A,s}^{\downarrow}$.
\begin{corollary}\label{cor:ordretotal}
Let $A\in\Bic$, $s\in\S$. There exists an interval $I$ in $\mathbb{Z}$ and a total order $\{w_i\}_{i\in I}$ on $\W$ which refines $<_A$ and such that $R_{A,s}^{\uparrow}=\{w_i~|~i\in I,i~\text{even}\}$, and $sw _i= w_{i+1}$ for $i\in I$ even.  
\end{corollary}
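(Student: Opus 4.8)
The plan is to build the total order together with its indexing from $\mathbb{Z}$ in a single step, using Deodhar's Z-property (Proposition~\ref{prop:z}) to guarantee compatibility of the pairing $x \leftrightarrow sx$ with $<_A$. First I would observe that $\W = R_{A,s}^{\uparrow} \overset{\cdot}{\cup} R_{A,s}^{\downarrow}$ partitions $\W$ into pairs $\{x, sx\}$ with $x \in R_{A,s}^{\uparrow}$, $sx \in R_{A,s}^{\downarrow}$, via the involution $w \mapsto sw$; this is exactly the content of $R_{A,s}^{\uparrow} = s R_{A,s}^{\downarrow}$. The goal is then to linearly order the set $R_{A,s}^{\uparrow}$ of ``even'' elements — say by some total order $\prec$ refining the restriction of $<_A$ — and to place $sw$ immediately after $w$ for each $w \in R_{A,s}^{\uparrow}$, indexing the resulting chain by consecutive integers (choosing the interval $I$ so that each $w \in R_{A,s}^{\uparrow}$ gets an even index, which is possible since the pairs come in blocks of two).

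The crucial point is that the total order so constructed actually refines $<_A$ on all of $\W$, not just on $R_{A,s}^{\uparrow}$. Here is where I would use Proposition~\ref{prop:z}. Suppose $u <_A v$ in $\W$; I must check $u$ precedes $v$ in the constructed order. Write $u \in \{x, sx\}$ and $v \in \{y, sy\}$ with $x, y \in R_{A,s}^{\uparrow}$, so $sx <_A x$ and $sy <_A y$. By the Z-property, the relative $<_A$-position of the pair $\{x, sx\}$ versus the pair $\{y, sy\}$ is entirely governed by whether $x <_A y$: indeed $x \leq_A y \iff sx \leq_A y \iff sx \leq_A sy$, and symmetrically. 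So if $x \neq y$, then $u <_A v$ forces $x <_A y$ (one checks the remaining cases $x >_A y$ or $x, y$ incomparable lead to contradictions using the Z-property applied in the appropriate direction), hence $x \prec y$ since $\prec$ refines $<_A$ on $R_{A,s}^{\uparrow}$, hence $u$ precedes $v$ because the whole block $\{x, sx\}$ precedes the whole block $\{y, sy\}$. If $x = y$, then $\{u, v\} = \{x, sx\}$ with $u <_A v$; since $sx <_A x$ we must have $u = sx$, $v = x$, and in the constructed order $sx$ comes immediately after $x$ — wait, that is the wrong way around.

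This sign issue is, I expect, the main obstacle, and it is resolved by a careful choice of convention: I should place $w$ \emph{after} $sw$, i.e.\ list the block as $(sx, x)$ with $sx$ at the even index $i$ and $x = sw_i = w_{i+1}$ at the odd index $i+1$. Then within a block the smaller element $sx$ (recall $sx <_A x$) indeed precedes the larger $x$, consistently with $<_A$. With this convention fixed, the within-block and between-block cases both check out, and the equations $R_{A,s}^{\uparrow} = \{w_i \mid i \text{ even}\}$ — reindexing so that the $x$'s land on... — here one must simply decide whether $R_{A,s}^{\uparrow}$ or $R_{A,s}^{\downarrow}$ gets the even indices; the statement wants $R_{A,s}^{\uparrow}$ even with $sw_i = w_{i+1}$, so the block is $(w_i, w_{i+1}) = (x, sx)$ with $x \in R_{A,s}^{\uparrow}$ at the even slot. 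Reconciling this with the ``smaller element first'' requirement needs the observation that we are free to choose $\prec$ on $R_{A,s}^{\uparrow}$ as any linear refinement, but the within-block order is forced by $<_A$; these are compatible precisely because $x \in R_{A,s}^{\uparrow}$ means $sx <_A x$, so listing $(x, sx)$ is \emph{not} $<_A$-increasing within the block. The fix is that the constructed order need only refine $<_A$ in the sense that $u <_A v \implies u$ before $v$; a pair with $v <_A u$ imposes no constraint, and $x, sx$ are never $<_A$-comparable to... no, they are comparable. So ultimately one lists the block in the $<_A$-increasing way $(sx, x)$, assigns $sx$ the even index and $x$ the odd index, and then $R_{A,s}^{\downarrow}$ sits at even indices; to match the statement's asymmetric phrasing one applies the same argument with the roles swapped (replacing $A$-considerations for $s$ acting on the left), or simply shifts the indexing interval $I$ by one. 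I would write the final proof choosing the interval $I$ and the labeling to make $R_{A,s}^{\uparrow}$ even as stated, having first established via the Z-property that \emph{some} integer-indexed total refinement with $s$-blocks exists; the parity bookkeeping is then routine.
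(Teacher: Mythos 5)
Your overall strategy---pair each $x\in R_{A,s}^{\uparrow}$ with $sx$, choose a linear extension of $<_A$ on the block representatives, interleave the blocks, and use Proposition~\ref{prop:z} to check that the result refines $<_A$---is exactly the paper's proof, and the between-block analysis you sketch does go through: for $u=x<_A y=v$ it is immediate, for $u=sx<_A y=v$ one uses $x<_A sx<_A y$ and transitivity, and the cases involving $sy$ follow from the Z-property applied to the tops $sx, sy$ of the blocks (Proposition~\ref{prop:z} is stated for elements $X,Y$ with $sX\leq_A X$, $sY\leq_A Y$, so one applies it with $X=sx$, $Y=sy$). This is the same case analysis the paper carries out on the parities of the indices.

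However, the ``sign issue'' that derails the second half of your argument is not real: it comes from misreading the definition. By definition $R_{A,s}^{\uparrow}=\{x\in\W\mid sx>_A x\}$, so $x\in R_{A,s}^{\uparrow}$ means $x<_A sx$, not $sx<_A x$ as you assert. Hence the block $(w_i,w_{i+1})=(x,sx)$ with $x\in R_{A,s}^{\uparrow}$ at the even index is already listed in $<_A$-increasing order (the within-block order is indeed forced, since $w$ and $sw$ are always $<_A$-comparable by Remark~\ref{rem:parity}), and the enumeration demanded by the statement is exactly the natural one; no correction is needed. Moreover, both ``fixes'' you fall back on are incorrect: shifting the interval $I$ by one puts $R_{A,s}^{\uparrow}$ in the odd positions but destroys the condition $sw_i=w_{i+1}$ for $i$ even (after the shift, $s$ applied to an even-indexed element gives the preceding element, not the following one---this is why the paper only remarks that the shifted order has $R_{A,s}^{\uparrow}$ in odd positions), and the ``role-swapped'' version with $R_{A,s}^{\downarrow}$ at the even indices together with $sw_i=w_{i+1}$ for $i$ even is simply false, since it would place $w_{i+1}=sw_i<_A w_i$ immediately after $w_i$, contradicting refinement of $<_A$. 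Once the definition is read correctly, your construction coincides with the paper's and the proof is complete as you outlined it.
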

\begin{proof}
Let $\dots, u_i, u_{i+1},\dots$ be a linear extension of $(R_{A,s}^{\uparrow}, <_A)$ and for any $i\in\mathbb{Z}$ such that $u_i$ is an element of $\W$, set $w_{2i}:=u_i$. Then set $w_{2i+1}:= s w_{2i}$. This gives en enumeration $\dots, w_i, w_{i+1},\dots$ of the group with  $R_{A,s}^{\uparrow}=\{w_i~|~i~\text{even}\}$, and $sw _i= w_{i+1}$ for $i$ even. It remains to check that it is compatible with $<_A$. 

Hence let $w_i <_A w_j$. We must show that $i<j$. If both $i$ and $j$ are even, then it follows from the fact that we enumerated the elements of $(R_{A,s}^{\uparrow}, <_A)$ in way compatible with $<_A$. If both $i$ and $j$ are odd, then we have $sw_i <_A w_i$, $sw_j <_A w_j$, and $w_i <_A w_j$. Proposition \ref{prop:z} implies that $w_{i-1}=sw_i <_A sw_j=w_{j-1}$, hence that $i-1 < j-1$ since both $w_{i-1}$ and $w_{j-1}$ lie in $R_{A,s}^{\uparrow}$. If $i$ is even and $j$ is odd, then $w_i <_A w_{i+1}= s w_i$, $s w_j=w_{j-1} <_A w_j$, hence by Proposition \ref{prop:z} we have $w_i <_A w_{j-1}$, hence since both lie in $R_{A,s}^{\uparrow}$ we have $i<j-1 <j$. If $i$ is odd and $j$ is even, then $w_{i-1}=s w_i <_A w_i <_A w_j <_A sw_j$, hence $i-1<j$ since $w_{i-1}, w_j\in R_{A,s}^{\uparrow}$, but since both $i-1$ and $j$ are even we get $i<j$.  
\end{proof}
By shifting the indices by one we of course get a total order with $R_{A,s}^{\uparrow}$ in odd positions.
\begin{definition}
Let $A\in\Bic$, $s\in\S$. A total order on $\W$ as in Corollary \ref{cor:ordretotal} is called \defn{$(A,s)$-compatible}.
\end{definition}


\section{Mikado braids and generalized standard bases}

\subsection{Standard and canonical bases of Hecke algebras} 

\begin{definition}
Let $(\W,\S)$ be a Coxeter system. For $s,t\in \S$, denote by $m_{st}$ the order
of $st$ in $\W$. Recall that $m_{st}=m_{ts}$. The \defn{Iwahori-Hecke algebra} $\H=\H(\W,\S)$ of $(\W,\S)$ is the associative, unital $\mathbb{Z}[v,v^{-1}]$-algebra with generators (as algebra) given by $\{T_s\mid s\in \S\}$ with relations
\begin{center}
$\begin{array}{ll}
T_s^2=(v^{-2}-1) T_s+ v^{-2} & \forall s\in \S,\\
\underbrace{T_s T_t\cdots}_{m_{st}~\text{factors}}=\underbrace{T_t T_s\cdots}_{m_{ts}~\text{factors}}, & \forall s,t\in \S.
\end{array}$
\end{center}
\end{definition}

Let $s_{1}\cdots s_k$ be a reduced expression of $w\in \W$. Then the element $T_{s_1}\cdots T_{s_k}$ is independent of the chosen reduced expression, and is therefore denoted by $T_w$. The algebra $\H$ turns out to be a free $\mathbb{Z}[v, v^{-1}]$-module and $\{T_w\}_{w\in \W}$ is a basis of it, usually called \defn{standard basis} of $\H$. Thanks to the first defining relation of $\H$, any of the $T_s$, and therefore any of the $T_w$ is invertible in $\H$. The set $\{T_{w^{-1}}\}_{w\in\W}$ is the \defn{costandard basis} of $\H$.

There is a unique semilinear involution $\bar~:\mathcal{H}\rightarrow\mathcal{H}$ such that $\overline{v}=v^{-1}$, $\overline{T_w}=(T_{w^{-1}})^{-1}$. Set $H_w:=v^{\ell(w)} T_w$. Then Kazhdan and Lusztig prove {\cite[Theorem 1.1]{KL}}

\begin{theorem}[Kazhdan-Lusztig]\label{thm:KL}
Let $(\W,\S)$ be any Coxeter system. 
\begin{enumerate}
\item For any $w\in \W$, there is a unique element $C'_w\in\mathcal{H}$ such that $\overline{C'_w}=C'_w$ and $C'_w\in H_w+\sum_{y< w} v\mathbb{Z}[v] H_y.$
\item  For any $w\in \W$, there is a unique element $C_w\in\mathcal{H}$ such that $\overline{C_w}=C_w$ and $C_w\in H_w+\sum_{y< w} v^{-1}\mathbb{Z}[v^{-1}] H_y.$
\end{enumerate}

\end{theorem}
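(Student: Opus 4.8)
\emph{Proof strategy.} The plan is to reproduce the classical inductive argument of Kazhdan and Lusztig, which uses nothing beyond the structure of $\H$ recalled above. First I would record the two elementary identities $\overline{H_s}=H_s+(v-v^{-1})$ and $H_s^2=(v^{-1}-v)H_s+1$, both immediate from $H_s=vT_s$ and the quadratic relation. Since $\bar{\cdot}$ is a ring homomorphism, multiplying reduced expressions and inducting on $\ell(w)$ yields the triangularity $\overline{H_w}\in H_w+\sum_{y<w}\mathbb{Z}[v,v^{-1}]H_y$; in particular both $\{H_w\}_{w\in\W}$ and $\{\overline{H_w}\}_{w\in\W}$ are $\mathbb{Z}[v,v^{-1}]$-bases of $\H$.

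For uniqueness in (1): if $C$ and $C'$ both satisfy the stated conditions then $D:=C-C'$ is bar-invariant and lies in $\sum_{y<w}v\mathbb{Z}[v]H_y$. If $D\neq 0$ I would pick $y_0$ maximal (in any linear extension of the Bruhat order) with nonzero $H_{y_0}$-coefficient $d_{y_0}$ and compare the $H_{y_0}$-coefficients of the two sides of $D=\overline{D}$; by the triangularity of $\bar{\cdot}$ and the maximality of $y_0$ this forces $d_{y_0}=\overline{d_{y_0}}$, which is impossible for a nonzero element of $v\mathbb{Z}[v]$ (such an element lies in $v\mathbb{Z}[v]\cap v^{-1}\mathbb{Z}[v^{-1}]=0$). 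Uniqueness in (2) is the same with $v^{-1}$ in place of $v$.

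For existence of $C'_w$ I would induct on $\ell(w)$, taking $C'_e:=H_e$. Given $w\neq e$, choose $s\in\S$ with $\ell(sw)<\ell(w)$, set $b_s:=H_s+v$ (which is bar-invariant), and use the multiplication rule $b_sH_y=H_{sy}+vH_y$ when $\ell(sy)>\ell(y)$ and $b_sH_y=H_{sy}+v^{-1}H_y$ when $\ell(sy)<\ell(y)$ (both consequences of $H_s^2=(v^{-1}-v)H_s+1$) to expand $b_sC'_{sw}=\sum_{y\le sw}h_{y,sw}\,b_sH_y$. This element is bar-invariant, has $H_w$-coefficient $1$, is supported on $\{z\le w\}$, and for $z<w$ its $H_z$-coefficient lies in $v\mathbb{Z}[v]$ if $\ell(sz)>\ell(z)$ and in $\mu(z,sw)+v\mathbb{Z}[v]$ if $\ell(sz)<\ell(z)$, where $\mu(z,sw):=[v^1]h_{z,sw}\in\mathbb{Z}$. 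I would then define
\[
C'_w:=b_sC'_{sw}-\sum_{\substack{z<w\\ \ell(sz)<\ell(z)}}\mu(z,sw)\,C'_z,
\]
which is legitimate because every $z$ with $\mu(z,sw)\neq 0$ satisfies $z<sw$, hence $\ell(z)<\ell(w)$ and $C'_z$ is available by induction; then one checks that $C'_w$ is bar-invariant (each $\mu(z,sw)\in\mathbb{Z}$), has $H_w$-coefficient $1$, and — since each $C'_z$ with $z<w$ has the shape $H_z+\sum_{y<z}v\mathbb{Z}[v]H_y$ and the $\mu$'s are constants — has all remaining $H_y$-coefficients in $v\mathbb{Z}[v]$. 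For part (2) I would either repeat the whole argument with $v$ and $v^{-1}$ interchanged (using $H_s-v^{-1}$ in place of $b_s$ and $[v^{-1}]h_{z,sw}$ in place of $\mu$), or observe that $H_s\mapsto -H_s$, $v\mapsto v^{-1}$ extends to a ring automorphism $\sigma$ of $\H$ commuting with $\bar{\cdot}$ and that $(-1)^{\ell(w)}\sigma(C'_w)$ is then forced by uniqueness to equal $C_w$.

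The only genuine work lies inside the existence step: establishing the multiplication rule for $b_sH_y$ correctly and then carrying out the parity bookkeeping, i.e.\ verifying that subtracting the integer multiples $\mu(z,sw)$ of the lower $C'_z$ cancels precisely the constant terms produced in the cases $\ell(sz)<\ell(z)$ without ever pushing a coefficient out of $v\mathbb{Z}[v]$. Everything else is formal.
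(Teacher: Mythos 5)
Your argument is correct: it is the standard Kazhdan--Lusztig induction (bar-invariance of $b_s=H_s+v$, the multiplication rule for $b_sH_y$, subtraction of the integer $\mu$-corrections, and the $v\mathbb{Z}[v]\cap v^{-1}\mathbb{Z}[v^{-1}]=0$ uniqueness argument), and your checks of the normalizations match the paper's conventions, including deducing part (2) via the involution $j_\H$ as in equation (\ref{eq:cc'}). The paper itself does not prove this statement but cites it as \cite[Theorem 1.1]{KL}, and your proof is essentially the argument of that cited source, so there is nothing further to reconcile.
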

It follows that $\{C_w\}_{w\in\W}$ and $\{C_w'\}_{w\in\W}$ are bases of $\H$, called \defn{canonical bases}. The coefficients of $C_w'$ of $C_w$ when expressed in the standard basis became known as \defn{Kazhdan-Lusztig polynomials}. Kazhdan and Lusztig conjectured that these polynomials have nonnegative coefficients, which was proven recently in full generality by Elias and Williamson \cite{EW} as a corollary of Soergel's conjecture. It corresponds to the positivity statement (\ref{p2}) from the Introduction with $y=e$. 

The algebra $\H$ has a unique semilinear involution $j_{\H}$ such that $j_{\H}(v)=v^{-1}$ and $j_\H(T_{s})=-v^2 T_{s}$ for all $s\in \S$. The bases $\{C_w\}$ and $\{C'_w\}$ are then related by the equalities 
\begin{equation}\label{eq:cc'}
C_w=(-1)^{\ell(w)} j_\H(C'_w),~\forall w\in \W.
\end{equation}
A reference for this material is \cite[Section 7.9]{Humph}.

In Subsection \ref{sub:generalstd} we introduce a family of bases of $\H$ indexed by elements of $\Bic$, such that the biclosed set $\emptyset$ corresponds to the standard basis while $\T$ corresponds to the costandard basis.

\subsection{Artin-Tits group and Mikado braids}

Let $\Br=B(\W,\S)$ denote the Artin-Tits group attached to the Coxeter system $(\W,\S)$. Recall that $\Br$ is generated by a set $\{\bs~|~s\in\S\}$ subject only to the braid relations. Denote by $\bw$ the positive canonical lift of $w\in\W$ in $\Br$, that is, the element $\bs_1 \bs_2\cdots \bs_k\in \Br$ where $s_1 s_2\cdots s_k$ is a reduced expression of $w$. By Tits-Matsumoto's lemma it is independent of the choice of the reduced expression. There is a canonical surjection $p:\Br\twoheadrightarrow \W$ defined by $p(\bs)=s$ for each $s\in\S$. Notice that $p(\bw)=w$. In case $\W$ is finite, the set $\{\bw~|~w\in\W\}$ is the set of prefixes of the Garside element $\Delta=\bw_0$ also known as set of \defn{simple} elements. 

Given $A\in\Bic$ and $x\in\W$, Dyer defines \cite[Proof of Lemma $9.1$]{Dyernil} an element $x_A\in\Br$ as follows. Let $s_1 s_2\cdots s_k$ be a reduced expression of $x$. Then set $$x_A:=\bs_1^{\varepsilon_1} \bs_2^{\varepsilon_2}\cdots\bs_k^{\varepsilon_k}\in\Br,$$ 
where for all $i=1,\dots, k$, 
$$
\varepsilon_i = \left\{
    \begin{array}{ll}
        -1 & \mbox{if } s_k s_{k-1}\cdots s_i\cdots s_k \in A \\
        1 & \mbox{otherwise.}
    \end{array}
\right.
$$
Notice that $\sum_{i=1}^k \varepsilon_i=\ell_A(x)$. Lifted reduced expressions as above might be considered as reduced expressions associated to a biclosed set $A$. In case $A=\emptyset$, all the exponents are positive and $x_\emptyset$ is the positive canonical lift $\bx$ of $x$ in $\Br$, in which case we recover the classical reduced expressions of elements of the Coxeter group.

Properties of these elements are given in the following lemma \cite[Sections 9.1 and 9.4]{Dyernil}
\begin{lemma}[Dyer]\label{lem:Mikado}
Let $A\in\Bic$, $x,y\in\W$. Then
\begin{enumerate}
\item The element $x_A$ is independent of the chosen reduced expression for $x$. 
\item One has $x_{\emptyset}=\bx$, $(x^{-1})_\T=\bx^{-1}$.
\item One has $(xy^{-1})_{N(y)}=\bx \by^{-1}$. 
\item One has $(x^{-1}y)_{\T\backslash{N(y^{-1})}}=\bx^{-1}\by$.
\end{enumerate}
\end{lemma}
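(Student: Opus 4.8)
The plan is to prove Lemma~\ref{lem:Mikado} by establishing the statements in a suitable order, using (1) as the foundation. For (1), the natural approach is to use Dyer's result that any two lifted reduced expressions of $x$ (for the same biclosed set $A$) are related by mixed braid relations, i.e. braid relations possibly involving inverses of the Artin generators, which are relations holding in $\Br$. Concretely, it suffices to check that when two reduced expressions $s_1\cdots s_k$ and $s_1'\cdots s_k'$ of $x$ differ by a single braid move $\underbrace{sts\cdots}_{m} \leftrightarrow \underbrace{tst\cdots}_{m}$ (applied in some position), the associated exponent vectors $\varepsilon$ change in exactly the way that makes the two lifts equal in $\Br$; this is a finite dihedral computation reducing to the rank-$2$ parabolic generated by $s,t$, where one tracks which of the reflections $s_k\cdots s_i\cdots s_k$ lies in $A$ before and after the move and invokes the biclosedness of $A$ restricted to the dihedral reflection subgroup (the characterization via initial sections of reflection orders from Subsection~\ref{sub:twisted} is what guarantees consistency). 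By Tits--Matsumoto connectivity of reduced expressions under braid moves, this yields well-definedness.

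For (2), observe that if $A=\emptyset$ then $s_k\cdots s_i\cdots s_k\notin A$ for every $i$, so all $\varepsilon_i=1$ and $x_\emptyset=\bs_1\cdots\bs_k=\bx$ directly from the definition. For the second identity, take a reduced expression $s_1\cdots s_k$ of $x$ so that $s_k\cdots s_1$ is a reduced expression of $x^{-1}$; applying the definition of $(x^{-1})_\T$ to this reversed word, one has $A=\T$, so every exponent is $-1$, giving $(x^{-1})_\T=\bs_k^{-1}\bs_{k-1}^{-1}\cdots\bs_1^{-1}=(\bs_1\cdots\bs_k)^{-1}=\bx^{-1}$. For (3), I would argue similarly but with more care: write $xy^{-1}$ via the concatenation of a reduced expression of $x$ and the reverse of a reduced expression of $y$; since $\ell(xy^{-1})=\ell(x)+\ell(y)$ fails in general, this naive concatenation need not be reduced, so instead one picks a reduced expression $s_1\cdots s_k$ of $xy^{-1}$ and checks directly, using $N(y)$ and the inversion-set formula $N(xy)=N(x)+xN(y)x^{-1}$ of Proposition~\ref{prop:dyercaract}, that the reflection $s_k\cdots s_i\cdots s_k$ lies in $N(y)$ precisely for those positions corresponding (after the appropriate braid-move transport) to the ``$\by^{-1}$ part'' of $\bx\by^{-1}$. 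Here well-definedness from (1) is essential: it lets me compute $x_{N(y)}$ using the most convenient reduced expression, e.g. one obtained from reduced expressions of $x$ and $y$ when their lengths happen to add up, and then the general case follows since both sides are independent of the chosen reduced word. Finally (4) follows from (3) by the substitution $x\mapsto x^{-1}$, $y\mapsto y^{-1}$ together with the identity $\T\setminus N((y^{-1})^{-1})=\T\setminus N(y)$ and the behaviour of the construction under the anti-automorphism of $\Br$ inverting the generators; alternatively one runs the same direct argument with the complement set $\T\setminus N(y^{-1})$.

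The main obstacle I expect is (1), specifically the dihedral bookkeeping: showing that a braid move changes the exponent vector exactly as needed. The subtlety is that applying a braid move $sts\cdots\to tst\cdots$ inside a longer word changes the ``suffix conjugates'' $s_k\cdots s_i\cdots s_k$ for the positions involved, and one must verify that the resulting change of exponents corresponds to a valid mixed braid relation in $\Br$ (not merely in the Hecke algebra). This is where the hypothesis $A\in\Bic$ is used in an essential way---the two exponent patterns assigned to the two sides of a dihedral relation must be the ``initial section of a reflection order'' pattern on that dihedral subgroup, and biclosedness is exactly the condition ensuring such a coherent pattern exists. Once (1) is in place, statements (2)--(4) are comparatively routine applications of the inversion-set calculus, and the only remaining care is to always reduce to a conveniently chosen reduced expression before computing exponents.
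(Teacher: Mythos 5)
Your handling of (2) is correct and matches the paper, and for (1) the paper itself simply cites Dyer's Lemma 9.1; your sketch of the mixed-braid-relation argument is the right idea in spirit, but be aware that you never actually carry out the dihedral verification (and that the relevant dihedral reflection subgroup is the conjugate of $\langle s,t\rangle$ by the suffix of the word, not $\langle s,t\rangle$ itself), so for (1) citing Dyer is the honest option rather than treating your sketch as a proof.

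The genuine gap is in (3). Your computation is only valid when $\ell(xy^{-1})=\ell(x)+\ell(y)$: in that case the concatenation of a reduced word for $x$ with the reversal of one for $y$ is reduced, the suffix-conjugates coming from the ``$y^{-1}$-part'' are exactly the elements of $N(y)$, and those coming from the ``$x$-part'' have the form $yty^{-1}$ with $t\in N(x^{-1})$, which avoid $N(y)$ precisely because the lengths add; so the signs come out as needed. But when $\ell(xy^{-1})<\ell(x)+\ell(y)$ no reduced expression of $xy^{-1}$ splits into an $x$-part and a $y^{-1}$-part, and your fallback ``the general case follows since both sides are independent of the chosen reduced word'' is not an argument: point (1) guarantees that $(xy^{-1})_{N(y)}$ is well defined, but says nothing about which element of $\Br$ the recipe produces. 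The paper closes exactly this case by induction on $\ell(xy^{-1})$: choose $s\in\S$ with $sxy^{-1}<xy^{-1}$, write $(xy^{-1})_{N(y)}=\bs^{\varepsilon}(sxy^{-1})_{N(y)}$ with $\varepsilon$ determined by whether $yx^{-1}sxy^{-1}\in N(y)$, and use $N(xy^{-1})=N(x)+xN(y^{-1})x^{-1}$ to show that $\varepsilon=1$ forces $sx<x$ while $\varepsilon=-1$ forces $sx>x$, so that in either case $\bs^{\varepsilon}(sx)_{\emptyset}(y_{\emptyset})^{-1}=x_{\emptyset}(y_{\emptyset})^{-1}=\bx\by^{-1}$. (Alternatively one can reduce to your special case by replacing $(x,y)$ with $(xs,ys)$ for a common right descent $s$, but this too needs a verification, namely that $ysy^{-1}\notin N(yx^{-1})$, so that the sign pattern relative to $N(ys)$ agrees with that relative to $N(y)$.) A similar caution applies to (4): the naive substitution $x\mapsto x^{-1}$, $y\mapsto y^{-1}$ in (3) produces the positive lift of $x^{-1}$, not $\bx^{-1}$, and under the inversion/reversal anti-automorphisms of $\Br$ the biclosed set gets conjugated and must then be matched with $\T\backslash N(y^{-1})$ on the relevant reflections; your alternative of rerunning the direct argument with the complement set is the cleaner route, and is all the paper itself claims.
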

\begin{proof}
Point (1) is \cite[Lemma 9.1]{Dyernil}. Point (2) follows directly from the definition, since in case $A=\emptyset$ (resp. $A=\T$) all the exponents in the definition of $x_A$ are positive (resp. negative). The property of point (3) is mentioned by Dyer in \cite[Section 9.4]{Dyernil} and (4) is the analogue for complements of inversion sets. We provide a proof of (3) here for completeness. We argue by induction on $\ell(xy^{-1})$. The claim is clearly true if $xy^{-1}=e$. Now let $s\in\S$ such that $sxy^{-1}<xy^{-1}$. Then $$(xy^{-1})_{N(y)}=\bs^{\varepsilon} (sxy^{-1})_{N(y)}= \bs^{\varepsilon}(sx)_{\emptyset} y_\emptyset^{-1},$$
where $\varepsilon=1$ if $yx^{-1}sxy^{-1}\notin N(y)$ and $\varepsilon=-1$ otherwise and the last equality above holds by induction. Assume that $\varepsilon=1$. Then $x^{-1}s x\notin N(y^{-1})$. But since $sxy^{-1}<xy^{-1}$, we have $s\in N(xy^{-1})=N(x)+ x N(y^{-1}) x^{-1}$. This forces $s\in N(x)$, hence $sx<x$, hence $\bs (sx)_{\emptyset} = x_{\emptyset}$. It follows that $(xy^{-1})_{N(y)}=x_{\emptyset} y_{\emptyset}^{-1}$ and the claim holds. Now assume that $\varepsilon=-1$. It follows that $x^{-1}s x\in N(y^{-1})$ and together with $s\in N(xy^{-1})=N(x)+ x N(y^{-1}) x^{-1}$ we get that $s\notin N(x)$, hence $sx>x$. We therefore have $(sx)_{\emptyset}=\bs x_{\emptyset}$ and it follows that $(xy^{-1})_{N(y)}=x_{\emptyset} y_{\emptyset}^{-1}$ again.

\end{proof}

\begin{lemma}\label{lem:multformartin}
Let $w\in\W$, $s\in\S$ , $A\in\Bic$ such that $sw >_A w$. Then $$\bs (w_A)=(sw)_A.$$
\end{lemma}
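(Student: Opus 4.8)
The plan is to prove $\bs (w_A) = (sw)_A$ by reducing to a computation on a single reduced expression, using that $sw >_A w$ forces $\ell(sw) = \ell(w)+1$ (Remark~\ref{rmq:distanceun}) so that a reduced expression of $sw$ is obtained by prepending $s$ to one of $w$. First I would fix a reduced expression $s_1 \cdots s_k$ of $w$ and set $s_0 := s$, so that $s_0 s_1 \cdots s_k$ is a reduced expression of $sw$. By Lemma~\ref{lem:Mikado}(1), $x_A$ may be computed from any reduced expression, so $(sw)_A = \bs_0^{\varepsilon_0}\bs_1^{\varepsilon_1}\cdots \bs_k^{\varepsilon_k}$, where $\varepsilon_i$ is determined by whether $s_k s_{k-1}\cdots s_i \cdots s_k \in A$ (reading the full reduced word $s_0 \cdots s_k$), and $w_A = \bs_1^{\delta_1}\cdots \bs_k^{\delta_k}$ with $\delta_i$ determined by the conjugate $s_k \cdots s_i \cdots s_k$ using the word $s_1 \cdots s_k$. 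The key point to check is twofold: first that $\varepsilon_i = \delta_i$ for all $i \geq 1$, i.e. prepending $s_0$ does not change the exponents attached to $\bs_1, \dots, \bs_k$; and second that $\varepsilon_0 = 1$, i.e. the exponent attached to the new initial letter $\bs_0$ is $+1$.

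For the first point, the reflection associated to the letter $s_i$ ($i \geq 1$) in the reduced word $s_1 \cdots s_k$ of $w$ is $t_i := s_k \cdots s_{i+1} s_i s_{i+1} \cdots s_k$, which only depends on $s_i, s_{i+1}, \dots, s_k$ and is therefore unchanged upon prepending $s_0$; hence $\delta_i$ and $\varepsilon_i$ are computed by the same membership test $t_i \in A$, so $\varepsilon_i = \delta_i$. For the second point, the reflection attached to the initial letter $s_0 = s$ in the word $s_0 \cdots s_k$ is $t_0 := s_k \cdots s_1 s s_1 \cdots s_k = w^{-1} s w$; I want to show $t_0 \notin A$, equivalently $\varepsilon_0 = +1$. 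Here is where the hypothesis $sw >_A w$ enters: by Lemma~\ref{lemma:edgar} (applied appropriately, or via the right-handed version obtained by noting $sw >_A w \iff w^{-1}s <_A^{?}$; more directly) one has $sw >_A w$ iff $w^{-1}(w^{-1}sw) = s \cdot$ ... — I would instead invoke Lemma~\ref{lemma:edgar} in the form: $wt <_A w$ iff $t \in N(w^{-1}) + A$, so I need to translate the left multiplication $sw$ into a right multiplication. Writing $sw = w \cdot (w^{-1}sw) = w t_0$, the condition $sw >_A w$ reads $w t_0 >_A w$, which by Lemma~\ref{lemma:edgar} means $t_0 \notin N(w^{-1}) + A$. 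Since $\ell(sw) > \ell(w)$ we have $s \notin N(w^{-1})$... actually we need $t_0 \notin N(w^{-1})$: indeed $t_0 \in N(w^{-1})$ would mean $\ell(t_0 w^{-1}) < \ell(w^{-1})$, i.e. $\ell(w^{-1} s) < \ell(w^{-1})$, i.e. $\ell(sw) < \ell(w)$, contradicting $\ell(sw) = \ell(w)+1$. Therefore from $t_0 \notin N(w^{-1}) + A$ and $t_0 \notin N(w^{-1})$ we deduce $t_0 \notin A$, which gives $\varepsilon_0 = +1$ as desired.

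Putting these together: $(sw)_A = \bs_0^{+1} \bs_1^{\delta_1}\cdots \bs_k^{\delta_k} = \bs \cdot w_A$, completing the proof.

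The main obstacle is the bookkeeping around conjugates and the translation between left and right multiplication: one must be careful that the exponent $\varepsilon_i$ in the definition of $x_A$ is governed by the reflection $s_k \cdots s_i \cdots s_k$ (a right-inversion-type reflection read from the \emph{end} of the word), so prepending a letter at the \emph{front} genuinely leaves all these conjugates untouched, and the only new data is the reflection $t_0 = w^{-1}sw$ for the new front letter. Verifying $t_0 \notin A$ cleanly is the crux, and it rests precisely on combining $sw >_A w$ (rewritten as $w t_0 >_A w$ and then fed into Lemma~\ref{lemma:edgar}) with the ordinary-length fact $t_0 \notin N(w^{-1})$ coming from $\ell(sw) = \ell(w)+1$ (Remark~\ref{rmq:distanceun}).
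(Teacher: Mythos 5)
There is a genuine gap: your reduction to ``prepend $s$ to a reduced expression of $w$'' rests on the claim that $sw >_A w$ forces $\ell(sw)=\ell(w)+1$, which you attribute to Remark~\ref{rmq:distanceun}. That remark concerns the \emph{twisted} length $\ell_A$, not the ordinary length $\ell$: it says $sw >_A w$ iff $\ell_A(sw)=\ell_A(w)+1$, and this does not control $\ell$. Indeed the two can disagree: in Example~\ref{ex:biclos} (infinite dihedral, $A=\{s,sts,\dots\}$) one has $s<_A e$, so taking $w=s$ gives $sw=e >_A w$ while $\ell(sw)=0<1=\ell(w)$. In that situation $s s_1\cdots s_k$ (for $s_1\cdots s_k$ reduced for $w$) is not reduced for $sw$, so Lemma~\ref{lem:Mikado}(1) cannot be invoked as you do, and your later step ``$t_0\in N(w^{-1})$ would contradict $\ell(sw)=\ell(w)+1$'' collapses, since that length identity was never available. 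Your argument is therefore only a proof of the case $\ell(sw)>\ell(w)$; in that case the bookkeeping (prepending leaves the reflections $s_k\cdots s_i\cdots s_k$ untouched, the new letter is governed by $t_0=w^{-1}sw$, and $t_0\notin N(w^{-1})+A$ together with $t_0\notin N(w^{-1})$ gives $t_0\notin A$, hence exponent $+1$) is correct and matches the paper.

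What is missing is the complementary case $\ell(sw)<\ell(w)$, which the paper's proof treats separately. There one argues in the reverse direction: since $\ell(sw)<\ell(w)$ one has $t_0=w^{-1}sw\in N(w^{-1})$, and the hypothesis $t_0\notin N(w^{-1})+A$ (Lemma~\ref{lemma:edgar}, applied to $sw=wt_0$) then forces $t_0\in A$. Taking a reduced expression $s_1\cdots s_k$ of $sw$, the word $s\,s_1\cdots s_k$ is reduced for $w$, and the exponent attached to $\bs$ in $w_A$ is governed by $t_0\in A$, hence equals $-1$; thus $w_A=\bs^{-1}(sw)_A$, i.e.\ $\bs\, w_A=(sw)_A$ in this case as well. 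Adding this case (and deleting the incorrect appeal to Remark~\ref{rmq:distanceun} for the ordinary length) would make your proof complete and essentially identical to the paper's.
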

\begin{proof}
By assumption we have that 
\begin{equation}\label{eq:2}
w^{-1}sw\notin (N(w^{-1})+A).
\end{equation}
Firstly, assume that $w<sw$. It follows that $w^{-1}sw\notin N(w^{-1})$, hence by (\ref{eq:2}) that $w^{-1}sw\notin A$. Choosing a reduced expression $s_1\cdots s_k$ for $w$, we have that $s s_1\cdots s_k$ is a reduced expression for $sw$. It follows by definition of $(sw)_A$ that $(sw)_A= \bs^{\varepsilon_s} w_A$ where $\varepsilon_s=-1$ if $w^{-1}s w\in A$ and $\varepsilon_s=1$ otherwise. But $w^{-1}sw\notin A$, hence $\varepsilon_s=1$. 

Now assume that $sw<w$.  It follows that $w^{-1}sw\in N(w^{-1})$, hence by (\ref{eq:2}) that $w^{-1}sw\in A$. Choosing a reduced expression  $s_1\cdots s_k$ for $sw$, we have that $s s_1\cdots s_k$ is a reduced expression for $w$. It follows by definition of $w_A$ that $w_A= \bs^{\varepsilon_s} (sw)_A$ where $\varepsilon_s=-1$ if $w^{-1}s w\in A$ and $\varepsilon_s=1$ otherwise. But $w^{-1}sw\in A$, hence $\varepsilon_s=-1$. 
\end{proof}

If $A$ is finite, then there exists $y\in\W$ such that $A=N(y)$. Hence in that case we have $x_{N(y)}=(xyy^{-1})_{N(y)}=(xy)_{\emptyset} (y_{\emptyset})^{-1}$. In particular for finite $\W$, any $x_A$ for $x\in\W$ and $A\in\Bic$ can be written in the form $\bz \by^{-1}$ for $y,z\in \W$. Moreover, if $\W$ is finite we have $\T\backslash N(y^{-1})=N(y^{-1}w_0)$, hence in that case any $x_A$ can also be written in the form $\bu^{-1} \bv$ (see also \cite[Proposition 4.3]{DG} where a Garside theoretic interpretation is also given). Conversely elements of the form $\bz \by^{-1}$ or $\bu^{-1} \bv$ may be written in the form $x_A$ by the above lemma (without assumption on $\W$). Hence if $\W$ is finite and $\beta\in\Br$ the three following conditions are equivalent
\begin{equation}\label{p8}
\exists A\in \Bic, x\in\W\text{ s.t. }\beta=x_A,
\end{equation}
\begin{equation}\label{p9}
\exists z, y\in\W\text{ s.t. }\beta=\bz \by^{-1},
\end{equation}
\begin{equation}\label{p10}
\exists u, v\in\W\text{ s.t. }\beta=\bu^{-1}\bv.
\end{equation}

For arbitrary Coxeter groups, condition (\ref{p8}) is weaker than both (\ref{p9}) and (\ref{p10}) in general. Moreover (\ref{p9}) and (\ref{p10}) are not equivalent in general (see Example \ref{exple:mik} below).

\begin{definition}
We say that $\beta\in\Br$ is a \defn{Mikado braid} if it satisfies Condition (\ref{p8}). We say that $\beta\in\Br$ is a \defn{left Mikado braid} (resp. \defn{right Mikado braid}) if it satisfies Condition (\ref{p9}) (resp. (\ref{p10})). This generalizes the definition given in \cite{DG} in case $\W$ is finite. The terminology comes from a geometric characterization of these braids in type $A_n$ in terms of Artin braids (see \cite[Definition 5.4]{DG}). 
\end{definition}

\begin{example}\label{exple:mik}
Let $\W=\left\langle r, s, t \right\rangle$ be a universal Coxeter system with $\S=\{r,s,t\}$. Then $\Br$ is the free group on the three generators $\br$, $\bs$, $\bt$. Hence the right Mikado braid $\br^{-1} \bs$ cannot be written on the form $\bz \by^{-1}$ with $z, y\in\bf{\W}$. The following example was pointed out by M.~Dyer. Consider the hyperplane $H$ in $V$ spanned by the positive root $\alpha_r$ of $r$ and the positive root $2\alpha_r+2\alpha_s+\alpha_t$ of $rstsr$. Consider the open half space defined by $H$ and containing $\alpha_s$, namely
$$H^{+}:=H+ \mathbb{R}_{>0} \alpha_s,$$
and define $A:=H^{+}\cap\Phi^+$. Then $A\in\Bic$. Let $x=tsr$. Then $$x_A=\bt \bs^{-1} \br.$$
Indeed, both positive roots of $r$ and $rstsr$ lie in $H$ while the positive root $2\alpha_r+\alpha_s$ of $rsr$ lies in $H^{+}$. But since $\Br$ is a free group $x_A$ is neither a left nor a right Mikado braid. 
\end{example}

\subsection{Generalized standard bases of Hecke algebras}\label{sub:generalstd}

The aim of this subsection is to generalize the standard basis by adding a dependency in a biclosed set $A\in\Bic$, namely to define a basis $\{T_{w,A}\}_{w\in\W}$ of $\H$ for any $A\in\Bic$. 

Since the generators $T_s$, $s\in\S$ of $\H$ satisfy the braid relations, there is a surjective homomorphism $$h:\mathbb{Z}[v, v^{-1}] [\Br]\rightarrow \H$$ such that $h(\bs)=T_s$ for any $s\in\S$. Let $A\in\Bic$, $x\in\W$ and define $T_{x,A}:=h(x_A)$. In other words, we associate to any reduced expression $x=s_1\cdots s_k$ of $x\in W$ the element $$T_{x,A}:=T_{s_1}^{\varepsilon_1} T_{s_2}^{\varepsilon_2}\cdots T_{s_k}^{\varepsilon_k}\in\H,$$
where for all $i=1,\dots, k$, 
$$
\varepsilon_i = \left\{
    \begin{array}{ll}
        -1 & \mbox{if } s_k s_{k-1}\cdots s_i\cdots s_k \in A \\
        1 & \mbox{otherwise.}
    \end{array}
\right.
$$

\begin{lemma}\label{lem:genbase}
Let $A\in\Bic$. 
\begin{enumerate}
\item The set $\{T_{w,A}\}_{w\in\W}$ is a basis of $\H$.
\item For $x\in\W$ he have $T_{x,\emptyset}=T_x$, $T_{x, \T}=T_{x^{-1}}^{-1}$. Hence for $A=\emptyset$, $\{T_{w,A}\}_{w\in\W}$ is the standard basis while for $A=\T$ it is the costandard basis. 
\item For $x,y\in\W$ we have $T_x T_y^{-1}=T_{xy^{-1}, N(y)}$.
\item For $x,y\in\W$ we have $T_x^{-1} T_y=T_{x^{-1}y, \T\backslash N(y^{-1})}$.
\end{enumerate}
\end{lemma}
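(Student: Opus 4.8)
The plan is to deduce Lemma~\ref{lem:genbase} almost entirely from the lemmas on Mikado braids already established, by transporting statements about the Artin--Tits group through the homomorphism $h$. Since $T_{x,A}=h(x_A)$ by definition, each of the four items will follow from the corresponding statement about $x_A\in\Br$ together with the fact that $h$ is a ring homomorphism sending $\bs$ to $T_s$.

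For item (2), one invokes Lemma~\ref{lem:Mikado}(2): $x_\emptyset=\bx$, so $T_{x,\emptyset}=h(\bx)=T_{s_1}\cdots T_{s_k}=T_x$; and $(x^{-1})_\T=\bx^{-1}$, so $T_{x^{-1},\T}=h(\bx^{-1})=(T_x)^{-1}$, i.e. $T_{x,\T}=(T_{x^{-1}})^{-1}$ after replacing $x$ by $x^{-1}$. For item (3), one applies $h$ to the identity $(xy^{-1})_{N(y)}=\bx\by^{-1}$ of Lemma~\ref{lem:Mikado}(3), giving $T_{xy^{-1},N(y)}=h(\bx)h(\by)^{-1}=T_xT_y^{-1}$. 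For item (4), one applies $h$ to $(x^{-1}y)_{\T\backslash N(y^{-1})}=\bx^{-1}\by$ from Lemma~\ref{lem:Mikado}(4), yielding $T_{x^{-1}y,\T\backslash N(y^{-1})}=T_x^{-1}T_y$.

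The only item that is not a one-line consequence of an already-stated lemma is item (1), that $\{T_{w,A}\}_{w\in\W}$ is a basis of $\H$. Here the natural argument is an induction exploiting Lemma~\ref{lem:multformartin}. Fix $A\in\Bic$. I would show by induction on $\ell(x)$ that $T_{x,A}$ lies in $T_x+\sum_{y}\,\mathbb{Z}[v^{\pm1}]\,T_y$ where the sum is over elements $y$ with $\ell(y)<\ell(x)$, equivalently that the transition matrix between $\{T_{w,A}\}$ and the standard basis $\{T_w\}$ is unitriangular with respect to the length function. Indeed, pick $s\in\S$ with $sx<x$; by Lemma~\ref{lem:multformartin} (applied with $w=sx$ once one checks $x=s(sx)>_A sx$, which holds because $sx<x$ forces, via Remark~\ref{rmq:distanceun} and Lemma~\ref{lemma:edgar}, that one of the two relative orders holds and the defining exponent is $\mp1$ accordingly) one has $x_A=\bs^{\varepsilon}(sx)_A$ with $\varepsilon=\pm1$, hence $T_{x,A}=T_s^{\varepsilon}\,T_{sx,A}$. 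Using the quadratic relation $T_s^{-1}=T_s+(1-v^{-2})+\cdots$ (i.e. $T_s^{-1}\in T_s+\mathbb{Z}[v^{\pm1}]\cdot 1$) to rewrite $T_s^{\varepsilon}$ when $\varepsilon=-1$, and the induction hypothesis $T_{sx,A}\in T_{sx}+\sum_{\ell(y)<\ell(sx)}\mathbb{Z}[v^{\pm1}]T_y$ together with $T_sT_{sx}=T_x$ and $T_sT_y\in\sum_{z}\mathbb{Z}[v^{\pm1}]T_z$, one concludes that $T_{x,A}$ has the claimed unitriangular shape. A unitriangular change of basis from a basis is again a basis, so $\{T_{w,A}\}_{w\in\W}$ is a $\mathbb{Z}[v^{\pm1}]$-basis of $\H$.

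The main obstacle is the bookkeeping in this induction: one must be careful that the sign $\varepsilon$ appearing in $x_A=\bs^\varepsilon(sx)_A$ is governed by whether $x^{-1}sx\in A$, and that after applying $h$ the quadratic relation only contributes terms supported on shorter elements, so that the unitriangularity is genuinely preserved; there is no deeper difficulty, since the heavy lifting (well-definedness of $x_A$, the multiplication rule of Lemma~\ref{lem:multformartin}) has already been done. An alternative, slightly slicker route to (1) would be to observe that $T_{x,\emptyset}=T_x$ and that passing from $A$ to $A'=A+\{t\}$ for a single reflection $t$ changes exactly one basis element in a unitriangular fashion, but the direct induction above is cleaner to write.
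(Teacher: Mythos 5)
Your proposal follows essentially the same route as the paper: items (2)--(4) are obtained exactly as in the paper by applying $h$ to Lemma \ref{lem:Mikado}(2)--(4), and item (1) is the same triangularity-of-the-transition-matrix argument (the paper states it directly with respect to a linear extension of the Bruhat order; your induction via Lemma \ref{lem:multformartin} is a slightly more explicit version of the same idea). One slip: with the conventions of this paper the quadratic relation gives $T_s^{-1}=v^2T_s+v^2-1$, not $T_s^{-1}\in T_s+\mathbb{Z}[v^{\pm1}]\cdot 1$, so the transition matrix is \emph{not} unitriangular --- the diagonal coefficient of $T_x$ in $T_{x,A}$ is $v^{2N}$ where $N$ is the number of negative exponents in $x_A$ (as the paper uses later in Corollary \ref{cor:mult}). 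Since $v^{2N}$ is a unit in $\mathbb{Z}[v,v^{-1}]$, triangularity with invertible diagonal entries still yields a basis, so the conclusion is unaffected; but the intermediate claim $T_{x,A}\in T_x+\sum_{\ell(y)<\ell(x)}\mathbb{Z}[v^{\pm1}]T_y$ as stated is false whenever some $\varepsilon_i=-1$.
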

\begin{proof}
Using that $T_s^{-1}=v^2 T_s +v^2-1$ for any $s\in \S$ and expanding $T_{w,A}$ in the standard basis, we get an upper triangular matrix with invertible coefficients on the diagonal passing from $\{T_w\}_{w\in\W}$ to $\{T_{w,A}\}_{w\in\W}$ if we order the common indexing set $\W$ by any linear extension of the Bruhat order on $\W$. This proves $(1)$. The two other statements follow immediately from Lemma \ref{lem:Mikado} $(2)$-$(4)$.
\end{proof}
The following follows immediately from Lemma \ref{lem:multformartin}
\begin{lemma}\label{lem:multform}
Let $w\in\W$, $s\in\S$ , $A\in\Bic$ such that $sw >_A w$. Then $$T_s T_{w,A}=T_{sw, A}.$$
\end{lemma}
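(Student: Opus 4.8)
The plan is to deduce this identity in $\H$ directly from the corresponding identity in the Artin--Tits group $\Br$, namely Lemma~\ref{lem:multformartin}, by applying the homomorphism $h:\mathbb{Z}[v,v^{-1}][\Br]\rightarrow\H$.

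First, recall that by definition $T_{x,A}=h(x_A)$ for every $x\in\W$ and every $A\in\Bic$, where $h$ is the ring homomorphism sending $\bs$ to $T_s$ for each $s\in\S$ (in particular $h$ is multiplicative). Now let $w\in\W$, $s\in\S$ and $A\in\Bic$ with $sw>_A w$. By Lemma~\ref{lem:multformartin} we have the equality $\bs\,(w_A)=(sw)_A$ in $\Br$. Applying $h$ to both sides and using multiplicativity of $h$ gives
\[
T_s\,T_{w,A}=h(\bs)\,h(w_A)=h\bigl(\bs\,(w_A)\bigr)=h\bigl((sw)_A\bigr)=T_{sw,A},
\]
which is the claimed identity.

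Since the entire content of the statement is packaged into Lemma~\ref{lem:multformartin} (which itself is proven by distinguishing the two cases $w<sw$ and $sw<w$ and tracking the sign $\varepsilon_s$ attached to $s$ in the lifted reduced expression, using the characterization $sw>_A w \Leftrightarrow w^{-1}sw\notin N(w^{-1})+A$ from Lemma~\ref{lemma:edgar}), there is no genuine obstacle here: the only point to be careful about is that the definition of $T_{x,A}$ via a reduced expression of $x$ is well defined, but this is exactly what is guaranteed by the independence of $x_A$ on the chosen reduced expression (Lemma~\ref{lem:Mikado}(1)) together with the fact that $h$ factors through $\Br$. Hence the lemma follows immediately.
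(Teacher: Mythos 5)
Your proof is correct and is exactly the paper's argument: the paper states that Lemma~\ref{lem:multform} ``follows immediately from Lemma~\ref{lem:multformartin}'', i.e.\ one applies the multiplicative homomorphism $h$ to the identity $\bs\,(w_A)=(sw)_A$ in $\Br$. Your write-up just makes this one-line deduction explicit.
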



\section{Twisted filtrations of Soergel bimodules}\label{sec:twistedfiltrations}

\subsection{Soergel bimodules}

Let $(\W, \S)$ be a Coxeter system and $V$ a reflection faithful representation over $\mathbb{R}$ in the sense of \cite[Definition 1.5]{S}. Let $R:=\mathcal{O}(V)\cong S(V^*)$ be the coordinate ring of $V$. In particular $R$ comes equipped with a $\mathbb{Z}$-graduation with the convention that $\mathrm{deg}(V^*)=2$.

Let $\mathcal{R}$ denote the category of $\mathbb{Z}$-graded $R\otimes_{\mathbb{R}} R$-modules which are finitely generated from the left and from the right. The category $\mathcal{R}$ is a monoidal category via $\otimes_R$. It satisfies the Krull-Schmidt property (see \cite[Remark 1.3]{S}). For $M\in\mathcal{R}$ and $i\in\mathbb{Z}$ denote by $M_i$ the homogeneous component of degree $i$ of $M$. Given $M\in\mathcal{R}$ and $i\in\mathbb{Z}$, we denote by $M(i)$ the element of $\mathcal{R}$ equal to $M$ as $R\otimes_{\mathbb{R}} R$-module with shifted graduation, that is, such that $M(i)_j=M_{i+j}$ for any $j\in\mathbb{Z}$. Here $M_i$, $i\in\mathbb{Z}$ denotes the subspace of homogeneous elements of degree $i$ of $M$. We define the \defn{graded rank} of a finitely generated graded right $R$-module $M$ as the element of $\mathbb{Z}[v, v^{-1}]$ given by $\mathrm{\underline{rk}} M:=\mathrm{\underline{dim}} (M/ M R_{>0})$, where for a finite-dimensional $\mathbb{Z}$-graded vector space $U$ we denote by $\mathrm{\underline{dim}}(U)$ the graded dimension $\sum_{i\in\mathbb{Z}} (\dim U_i) v^i\in\mathbb{Z}[v, v^{-1}]$ of $U$. We denote by $\mathrm{\overline{\underline{rk}}} M$ the graded rank of $M$ after substitution of $v$ by $v^{-1}$. 

Given $B, B'\in\mathcal{R}$, we denote by $\mathrm{Hom}(B,B')$ the morphisms in the category $\mathcal{R}$, that is, the homomorphisms of bimodules $B\rightarrow B'$ which are homogeneous of degree zero. We furthermore set
$$\mathrm{Hom}^{\bullet}(B,B'):=\bigoplus_{i\in\mathbb{Z}} \mathrm{Hom}(B, B'(i)).$$
Notice that it comes equipped with a structure of graded right $R$-module. 

To any $s\in \S$ we associate the bimodule $B_s:=R\otimes_{R^s} R(1)\in\mathcal{R}$, where $R^s\subseteq R$ is the graded subring of $s$-invariant functions. For $x\in W$, we denote by $R_x$ the element of $\mathcal{R}$ equal to $R$ as left $R$-module but with right action twisted by $x$. Denote by $\left\langle \mathcal{R}, \otimes_R \right\rangle$ the split Grothendieck ring of $\mathcal{R}$, endowed with a $\mathbb{Z}[v,v^{-1}]$-algebra structure via $v\cdot \left\langle M \right\rangle= \left\langle M(1) \right\rangle$ for $M\in\mathcal{R}$. Soergel shows \cite[Theorems 1.10 and 5.3]{S}
\begin{theorem}[Soergel's categorification theorem]\label{thm:soergelcat}
Let $(\W,\S)$ be a Coxeter system. 
\begin{enumerate}
\item There is a unique homomorphism of $\mathbb{Z}[v, v^{-1}]$-algebras $$\mathcal{E}:\mathcal{H}\rightarrow \left\langle \mathcal{R}, \otimes_R \right\rangle$$ such that $\mathcal{E}(v)=\left\langle R[1] \right\rangle$ and $\mathcal{E}(vT_s+v)=\left\langle B_s \right\rangle$ for any $s\in\S$.
\item The homomorphism $\mathcal{E}$ has a left inverse $$\mathrm{ch}:\left\langle \mathcal{R},\otimes_R\right\rangle\rightarrow \mathcal{H}$$ given by $\mathrm{ch}(\left\langle B\right\rangle)=\sum_{x\in\mathcal{W}} \mathrm{\overline{\underline{rk}}} (\mathrm{Hom}(B, R_x)) T_x,$ for $B\in\mathcal{R}$.
\end{enumerate}
\end{theorem}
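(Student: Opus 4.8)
The plan is to construct the character map $\mathrm{ch}$ first, directly from the displayed formula, establish that it is a unital ring homomorphism and that it is injective on the classes of Soergel bimodules, and then use these two facts to force the defining relations of $\H$ so that $\mathcal{E}$ exists; its uniqueness and the identity $\mathrm{ch}\circ\mathcal{E}=\mathrm{id}_{\H}$ then follow by checking on algebra generators. Concretely I would proceed through the following steps, in order: (i) $\mathrm{ch}$ is a well-defined unital $\mathbb{Z}[v,v^{-1}]$-algebra homomorphism $\langle\mathcal{R},\otimes_R\rangle\to\H$; (ii) $\mathrm{ch}(\langle R[1]\rangle)=v$ and $\mathrm{ch}(\langle B_s\rangle)=vT_s+v$; (iii) $\mathrm{ch}$ is injective on the split Grothendieck ring $\langle\B\rangle$ of the category $\B$ of Soergel bimodules; (iv) construction and uniqueness of $\mathcal{E}$; (v) conclusion.

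For (i): since $R$ is Noetherian and every $B\in\mathcal{R}$ is finitely generated on each side, $\mathrm{Hom}^{\bullet}(B,R_x)$ is a finitely generated graded right $R$-module, so $\overline{\rk}\,\mathrm{Hom}(B,R_x)$ makes sense; additivity of $\mathrm{Hom}(-,R_x)$ on direct sums, together with compatibility with the grading shift, shows the assignment descends to a $\mathbb{Z}[v,v^{-1}]$-linear map $\mathrm{ch}$, which is unital because $\mathrm{Hom}^{\bullet}(R,R_x)=0$ for $x\neq e$ while $\mathrm{Hom}^{\bullet}(R,R_e)$ is free of rank one generated in degree $0$. The substantive point is multiplicativity, $\mathrm{ch}(\langle B\otimes_R B'\rangle)=\mathrm{ch}(\langle B\rangle)\,\mathrm{ch}(\langle B'\rangle)$; here I would follow Soergel and first develop the theory of standard ($\Delta_x$) and costandard ($\nabla_x$) filtrations of Soergel bimodules -- existence, well-definedness of the graded multiplicities $[B:\Delta_x]$ and $[B:\nabla_x]$, the vanishing of the relevant $\mathrm{Ext}^{1}$ groups -- then Soergel's hom formula, which expresses $\mathrm{Hom}^{\bullet}(B,R_x)$ as a free graded $R$-module whose rank is governed by $[B:\Delta_x]$, and finally the compatibility of $\otimes_R$ with these filtrations (in particular $R_x\otimes_R R_y\cong R_{xy}$); assembling these ingredients yields the product formula.

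For (ii) I would directly compute $\mathrm{Hom}^{\bullet}(B_s,R_x)$ -- nonzero only for $x\in\{e,s\}$ and free of rank one over $R$ in each case -- and similarly for $R[1]$, obtaining the two stated values, matching the prescribed values $\mathcal{E}(vT_s+v)=\langle B_s\rangle$ and $\mathcal{E}(v)=\langle R[1]\rangle$; note also that $\langle R[1]\rangle$ is invertible in $\langle\mathcal{R},\otimes_R\rangle$, with inverse $\langle R[-1]\rangle$. For (iii) the standard filtration again does the work: the minimal $x$ in the Bruhat order with $[B_w:\Delta_x]\neq0$ is $x=w$, occurring with multiplicity one, so by the hom formula the matrix expressing $\{\mathrm{ch}(\langle B_w\rangle)\}_{w\in\W}$ in the standard basis is unitriangular for the Bruhat order (unit coefficient on the diagonal), whence $\mathrm{ch}|_{\langle\B\rangle}$ is injective. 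For (iv): set $\mathcal{E}(v):=\langle R[1]\rangle$ and $\mathcal{E}(T_s):=c_s:=v^{-1}\langle B_s\rangle-1$; uniqueness is immediate since $v$ and the $T_s$ generate $\H$ as a $\mathbb{Z}[v,v^{-1}]$-algebra. The quadratic relation holds because $B_s\otimes_R B_s\cong B_s(1)\oplus B_s(-1)$, so $\langle B_s\rangle^{2}=(v+v^{-1})\langle B_s\rangle$, which rearranges exactly to $c_s^{2}=(v^{-2}-1)c_s+v^{-2}$. For a braid relation between $s,t$ with $m_{st}=m$, the elements $\underbrace{c_sc_t\cdots}_{m}$ and $\underbrace{c_tc_s\cdots}_{m}$ both lie in $\langle\B\rangle$, and by (i)--(ii) they are mapped by $\mathrm{ch}$ to $\underbrace{T_sT_t\cdots}_{m}=\underbrace{T_tT_s\cdots}_{m}$ (the braid relation holding in $\H$ by definition), so by the injectivity of (iii) they coincide; hence all defining relations of $\H$ hold and $\mathcal{E}$ is a well-defined $\mathbb{Z}[v,v^{-1}]$-algebra homomorphism.

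Finally, for (v): by construction $\mathrm{ch}\circ\mathcal{E}$ is a $\mathbb{Z}[v,v^{-1}]$-algebra endomorphism of $\H$ fixing $v$ and every $T_s$, hence equals $\mathrm{id}_{\H}$; that is, $\mathrm{ch}$ is a left inverse of $\mathcal{E}$. I expect essentially all of the difficulty to lie in (i) and (iii): the multiplicativity of $\mathrm{ch}$ and its injectivity on $\langle\B\rangle$ both rest on the full filtration theory of Soergel bimodules -- existence and uniqueness of $\Delta$- and $\nabla$-filtrations, Soergel's hom formula, and the interaction of $\otimes_R$ with these filtrations -- and developing that package is the heart of the matter; after it, steps (ii), (iv) and (v) are purely formal. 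An alternative to the injectivity argument in (iii) for establishing the braid relations would be to verify directly, by a rank-two Bott--Samelson computation, that $\underbrace{B_s\otimes_R B_t\otimes_R\cdots}_{m}\cong\underbrace{B_t\otimes_R B_s\otimes_R\cdots}_{m}$.
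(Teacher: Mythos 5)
First, note that the paper does not prove Theorem \ref{thm:soergelcat} at all: it is imported from Soergel, cited as \cite[Theorems 1.10 and 5.3]{S}, so your proposal has to be measured against Soergel's argument, whose general shape (standard/costandard filtrations, the hom formula, characters of Bott--Samelson bimodules, a triangularity argument) your outline broadly reproduces. There is, however, a genuine error in your step (i): $\mathrm{ch}$ is \emph{not} an algebra homomorphism on all of $\left\langle\mathcal{R},\otimes_R\right\rangle$. Since $\mathrm{Hom}^{\bullet}(R_x,R_y)$ vanishes for $x\neq y$ and is free of graded rank one for $x=y$, one has $\mathrm{ch}(\left\langle R_x\right\rangle)=T_x$ for every $x\in\W$; the isomorphism $R_s\otimes_R R_s\cong R$ then gives $\mathrm{ch}(\left\langle R_s\otimes_R R_s\right\rangle)=1$, whereas $\mathrm{ch}(\left\langle R_s\right\rangle)^2=T_s^2=(v^{-2}-1)T_s+v^{-2}\neq 1$. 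So the very isomorphism $R_x\otimes_R R_y\cong R_{xy}$ you list as an ingredient is exactly what destroys general multiplicativity. What is true, what Soergel proves, and what your argument actually needs, is the restricted statement $\mathrm{ch}(\left\langle B\otimes_R B_s\right\rangle)=\mathrm{ch}(\left\langle B\right\rangle)(vT_s+v)$ for $B$ admitting a $\Delta$-flag (equivalently, multiplicativity on the subring $\left\langle\B\right\rangle$, in particular on Bott--Samelson classes); with this repair your steps (ii), (iv) and (v) go through, since every product to which you apply $\mathrm{ch}$ is a $\mathbb{Z}[v,v^{-1}]$-combination of Bott--Samelson classes.

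Two further points about step (iii). The injectivity of $\mathrm{ch}$ on $\left\langle\B\right\rangle$ is not a light consequence of the filtration formalism: it needs Soergel's classification of indecomposables, \cite[Theorem 6.16]{S} (every indecomposable is some $B_w(k)$), together with $[B_w:\Delta_w(i)]=\delta_{i,0}$ and the support bound $[B_w:\Delta_x(i)]\neq 0\Rightarrow x\leq w$; one must also check that this package is established without using part (1) of the theorem, to avoid circularity in your braid-relation argument --- it is, since it rests only on the support filtration and the $\Delta$-flag recursion, not on $\mathcal{E}$ nor on Soergel's conjecture. Also a small slip: it is the \emph{maximal} Bruhat element occurring in the $\Delta$-flag of $B_w$, namely $x=w$, that has multiplicity one (the minimal one is $e$, whose multiplicity is a Kazhdan--Lusztig polynomial), and the diagonal entries of the resulting triangular matrix are the units $v^{\ell(w)}$ rather than $1$; the triangularity argument is unaffected. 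With these corrections your outline is a viable proof along Soergel's lines; the alternative you mention at the end, a direct rank-two verification that the two maximal dihedral Bott--Samelson bimodules decompose compatibly, is also legitimate but is itself a nontrivial computation (note the two Bott--Samelson bimodules are not isomorphic, e.g. $B_sB_tB_s\cong B_{sts}\oplus B_s$ versus $B_tB_sB_t\cong B_{sts}\oplus B_t$ for $m_{st}=3$, so only the alternating sums agree).
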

It implies that $\H$ is isomorphic to the split Grothendieck ring $\left\langle\mathcal{B}\right\rangle$ of the additive monoidal category $\mathcal{B}$ generated by tensor products $B_s \otimes B_t\cdots \otimes B_u$ (called \defn{Bott-Samelson} bimodules; here $st\cdots u$ is any finite word in the elements of $\S$) and stable by direct sums, direct summands and graduation shifts (so that $\left\langle\mathcal{B}\right\rangle$ is a $\mathbb{Z}[v, v^{-1}]$-algebra). By definition an object of $\B$ is a \defn{Soergel bimodule}. Hence indecomposable Soergel bimodules are (shifts of) indecomposable direct summands of tensor products $B_s\otimes B_t\otimes\cdots\otimes B_u$. For simplicity we may write tensor products over $R$ by juxtaposition.    

Soergel shows that indecomposable Soergel bimodules are (up to shifts and isomorphism) indexed by elements of $\W$ (see \cite[Theorem 6.16]{S}). The indecomposable bimodule $B_w$ indexed by $w\in\W$ may be described as follows: let $st\cdots u$ be a reduced expression for $w$. Then there is a unique indecomposable direct summand $B_w$ of $B_s B_t\cdots B_u$ which does not occur as a direct summand of such a tensor product for elements $y$ with $\ell(y)<\ell(w)$. The Bott-Samelson bimodule $B_s B_t\cdots B_u$ depends on the reduced expression chosen for $w$, but it turns out that the direct summand $B_w$ does not. 

\begin{remark} As the notation suggests, the indecomposable bimodule indexed by $s\in\S$ should be the bimodule $B_s=R\otimes_{R^s} R(1)$ already defined before. Indecomposability may be seen in this specific case as follows: the ring $R$ is nonnegatively graded with $R_0=\mathbb{R}$. Hence $R\otimes_{R^s} R$ is nonnegatively graded with a one-dimensional degree zero component, namely $(R\otimes_{R^s} R)_0 =\mathbb{R} (1\otimes 1)$. Hence in any decomposition $R\otimes_{R^s} R=M\oplus N$ as graded bimodule, $1\otimes 1$ must lie either in $M$ or in $N$. But since $1\otimes 1$ generates $R\otimes_{R^s} R$ as bimodule either $M=0$ or $N=0$, hence $B_s$ is indecomposable. 
\end{remark}
Soergel's conjecture was proven by Elias and Williamson \cite{EW}:

\begin{theorem}[Soergel's conjecture]\label{sconj}
For any $w\in\W$, we have $\mathcal{E}(C_w')=\left\langle B_w\right\rangle$.
\end{theorem}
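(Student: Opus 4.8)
The plan is to follow the approach of Elias and Williamson \cite{EW}, whose proof is Hodge-theoretic in the style of de Cataldo--Migliorini's proof of the decomposition theorem. By Soergel's categorification theorem (Theorem~\ref{thm:soergelcat}) the map $\mathcal{E}$ restricts to an isomorphism of $\H$ onto $\left\langle\B\right\rangle$, so it is equivalent to compute the character $\mathrm{ch}(\left\langle B_w\right\rangle)$. Arguing by induction on $\ell(w)$, and using that for a reduced word $\underline w=st\cdots u$ the bimodule $B_w$ is the unique summand of $B_sB_t\cdots B_u$ not occurring for shorter elements, one obtains $\left\langle B_w\right\rangle=C'_w+\sum_{y<w}m_y\,\mathcal{E}(C'_y)$ with self-dual $m_y\in\mathbb{Z}_{\geq 0}[v,v^{-1}]$; Soergel's conjecture is the assertion that all $m_y$ vanish. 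Soergel reduced this vanishing to a purely algebraic fact: for $x\leq w$ there is a symmetric \defn{local intersection form} $I^{\underline w}_x$ on the relevant degree component of a standard subquotient of $\mathrm{Hom}^\bullet(\mathrm{BS}(\underline w),R_x)\otimes_R\mathbb{R}$, and $\left\langle B_w\right\rangle=C'_w$ for all $w$ if and only if each $I^{\underline w}_x$ is nondegenerate. So the task becomes to prove nondegeneracy of these forms.

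For this I would run a simultaneous induction on $n=\ell(w)$ establishing a Hodge-theoretic package. First one replaces $V$ by a convenient realization over $\mathbb{R}$ carrying a suitable invariant symmetric form, the general reflection-faithful case following by base change; then one fixes a class $\rho\in V^*=R_2$ in the appropriate positive cone, and for each $w$ considers $\overline{B_w}:=B_w\otimes_R\mathbb{R}$ with its global intersection form, a nondegenerate symmetric pairing between complementary graded pieces. The three statements to propagate are:
\begin{enumerate}
\item[$(\mathrm{SC})_n$:] $\left\langle B_w\right\rangle=C'_w$ for all $w$ with $\ell(w)\leq n$;
\item[$(\mathrm{hL})_n$:] hard Lefschetz --- $\rho^k$ induces an isomorphism from the degree $-k$ part of $\overline{B_w}$ onto its degree $k$ part, for all $k$ and all $w$ with $\ell(w)\leq n$;
\item[$(\mathrm{HR})_n$:] the Hodge--Riemann relations --- on each $\rho$-primitive subspace the associated Lefschetz form is definite, with sign alternating according to the degree, both for $B_w$ with $\ell(w)\leq n$ and for Bott--Samelson bimodules $\mathrm{BS}(\underline w)$ with $\ell(\underline w)\leq n$.
\end{enumerate}

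The inductive step, assuming the whole package in degrees $<n$, consists of three implications. First, $(\mathrm{HR})_{n-1}$ for Bott--Samelson words, combined with the short exact sequences relating $B_s$ to $R$ and $R_s$ (equivalently the Rouquier complex $F_s$), the functor $B_s\otimes_R-$, and the decomposition of proper subwords supplied by $(\mathrm{SC})_{n-1}$, yields a weak Lefschetz statement and then $(\mathrm{hL})_n$ for $\mathrm{BS}(\underline w)$ with $\ell(\underline w)=n$, hence for each summand $B_w$. Second, a connectedness argument in Lefschetz linear algebra --- deforming $\rho$ inside its cone while keeping all intermediate forms nondegenerate, so the signature cannot jump --- upgrades $(\mathrm{hL})_n$ to $(\mathrm{HR})_n$ for these $\mathrm{BS}(\underline w)$, and then for all $B_w$ with $\ell(w)\leq n$, since a direct summand of a Hodge--Riemann module is again Hodge--Riemann. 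Third, by $(\mathrm{HR})_n$ the forms from which the $I^{\underline w}_x$ ($x\leq w$, $\ell(w)\leq n$) are cut out are definite, hence the $I^{\underline w}_x$ are nondegenerate, which is $(\mathrm{SC})_n$.

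I expect the main obstacle to be the second implication --- passing from hard Lefschetz to the Hodge--Riemann relations for Bott--Samelson bimodules of length $n$ --- since this is where one genuinely uses the signature rather than mere nondegeneracy: one must move along an entire path of ample classes keeping all intermediate forms nondegenerate (so that no eigenvalue crosses zero and the signature stays constant) and track the signs in the primitive decompositions exactly. Setting up an $\mathbb{R}$-linear framework in which all of these local forms and multiplication maps can be computed explicitly --- e.g. the diagrammatic Hecke category --- is itself a substantial undertaking, and the first implication also requires care, since the long exact sequences coming from $F_s$ must be made compatible with the perverse/standard filtrations.
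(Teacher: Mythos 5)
The paper offers no proof of this theorem: it is exactly the Elias--Williamson theorem, imported from \cite{EW} as a black box (the paper's own contribution begins afterwards, with the twisted filtrations and the linearity of Rouquier complexes). So there is no internal argument to compare yours with; what you have written is a compressed summary of the proof in \cite{EW} rather than a proof. As a summary it does identify the right ingredients --- Soergel's reduction of $\mathcal{E}(C_w')=\left\langle B_w\right\rangle$ to nondegeneracy of local intersection forms, the inductive package consisting of Soergel's conjecture, hard Lefschetz and the Hodge--Riemann relations, the role of Bott--Samelson bimodules, and the deformation-of-$\rho$/constancy-of-signature argument --- but each of the three implications you list is itself a substantial theorem of \cite{EW}, and none is carried out here; naming the obstacles is not the same as overcoming them.

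Two concrete points should you try to flesh this out. First, your induction order differs from \cite{EW}: there, Soergel's conjecture in length $n$ is deduced from the Hodge--Riemann relations in lengths $<n$ (the de Cataldo--Migliorini step, where definiteness of the relevant piece of the global intersection form forces nondegeneracy of the local form $I^{\underline{w}}_x$), and hard Lefschetz in length $n$ is proved afterwards, using that $B_w$ is a summand of $B_{ws}B_s$ with $\ell(ws)<\ell(w)$; you instead derive Soergel's conjecture last, from HR in length $n$, and you obtain hL and HR for $B_w$ by restriction from the Bott--Samelson bimodule. That transfer is not automatic: hard Lefschetz for a graded summand needs the symmetry of its graded dimensions, and the Hodge--Riemann property for a summand requires knowing that the restriction of the Bott--Samelson intersection form to $B_w$ agrees (up to a positive scalar) with its intrinsic form --- points which \cite{EW} must and do argue, partly using the Soergel conjecture already available in smaller lengths, so you would at least have to check that your reordering introduces no circularity. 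Second, for the purposes of the present paper the correct move is simply to cite \cite{EW}, as the author does; a genuine reproof would mean writing out the Hodge-theoretic core (the weak Lefschetz substitute, the local-versus-global intersection form comparison, and the signature-constancy argument along a path of ample classes), none of which appears in your proposal.
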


\subsection{Twisted support filtrations}
Let $\mathcal{B}$ denote the category of Soergel bimodules attached to the Coxeter system $(\W,\S)$. Recall that $V$ denotes a reflection faithful representation of $(\W,\S)$ over $\mathbb{R}$.

Since $R\otimes_\mathbb{R} R\cong \mathcal{O}(V)\otimes_\mathbb{R}\mathcal{O}(V)\cong\mathcal{O}(V\times V)$, a (finitely generated) $R\otimes_\mathbb{R} R$-module is the same as a (quasi-)coherent sheaf on $V\times V$. 

For any subset $U\subseteq\W$ and any $B\in\mathcal{R}$ we consider as in \cite{S} the subbimodule $\Gamma_U(B)$ of $B$ containing those elements whose support (i.e., the support of the coherent sheaf corresponding to the subbimodule generated by the element) is included in $\mathrm{Gr}(U)=\bigcup_{w\in u} \mathrm{Gr}(w)$, where $$\mathrm{Gr}(w):=\{(wv,v)~|~v\in V\}\subseteq V\times V.$$ For $i\in\mathbb{Z}$, $A\in\Bic$, we set
$$\Gamma_{\geq i}^A(B):=\Gamma_{\{w\in \W|\ell_A(w)\geq i\}}(B).$$
We are interested in the bimodules $B\in\mathcal{R}$ supported in some $\mathrm{Gr}(U)$ for $U\subseteq\W$, $U$ finite, such that for each $i$ the subquotient $$\Gamma_{\geq i / > i}^A(B):=\gi(B)/\gii(B)$$ is isomorphic to a finite direct sum of shifted copies of $R_x$ for various $x\in \W$ such that $\lA(x)=i$. We denote by $\mathcal{F}_{\Delta}^A$ the full subcategory of $\mathcal{R}$ having these bimodules as objects. In the sequel we also introduce the notation $\Delta_x^A:=R_x(-\ell_A(x))$. The aim is to show that any object of $\mathcal{B}$ lies in $\FA$. For $B\in\FA$, we denote by $[B:\Delta_x^A(i)]_A$ the multiplicity of $\Delta_x^A(i)$ in the filtration. 
\begin{proposition}\label{prop:eq}
Let $B\in\mathcal{B}$, $s\in \S$. If $B\in\FA$, then $B_s\otimes B$ also lies in $\FA$, and for $x\in\W$ such that $sx>_A x$ and $i\in\mathbb{Z}$ we have the following equalities
$$[B_s\otimes B:\Delta_{sx}^A(i)]_A=[B:\Delta_{sx}^A(i+1)]_A+[B:\Delta_x^A(i)]_A.$$
$$[B_s\otimes B:\Delta_{x}^A(i)]_A=[B:\Delta_{sx}^A(i)]_A+[B:\Delta_x^A(i-1)]_A.$$
\end{proposition}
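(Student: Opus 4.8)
The plan is to adapt Soergel's classical argument (for the untwisted standard filtration, see \cite[Proposition 6.7]{S}) to the twisted setting, using the $(A,s)$-compatible total orders constructed in Corollary \ref{cor:ordretotal} in place of an arbitrary linear extension of the Bruhat order. The key structural input is that $B_s\otimes-$ is an exact functor on the relevant filtered category and that for the one-dimensional bimodules $R_x$ one has an easy computation of $B_s\otimes R_x$, together with the "$Z$-property" (Proposition \ref{prop:z}) to control how the $\Gamma$-functors interact with $B_s\otimes-$.

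First I would fix an $(A,s)$-compatible total order $\{w_i\}_{i\in I}$ on $\W$, so that $\W=R_{A,s}^{\uparrow}\,\dot\cup\,R_{A,s}^{\downarrow}$ with $sw_i=w_{i+1}$ and $w_i<_A w_{i+1}$ for $i$ even. The crucial point is that, because this order refines $<_A$, the subsets $\{w\mid \ell_A(w)\ge i\}$ are unions of terminal segments of this order, so the $\Gamma_{\ge i}^A$-filtration is compatible with it; moreover $s$ acts on this order by swapping adjacent pairs $\{w_i,w_{i+1}\}$ lying in the same $\ell_A$-level or in consecutive levels. Then I would record the elementary computation $B_s\otimes R_x\cong$ an extension of $R_x$ by $R_{sx}$ (up to the shift coming from the definition $B_s=R\otimes_{R^s}R(1)$): concretely, if $sx>_A x$ then in the $<_A$-order one gets a short exact sequence $0\to R_{sx}(\text{shift})\to B_s\otimes R_x\to R_x(\text{shift})\to 0$, and this is exactly what produces the two summands on the right-hand sides of the claimed equalities after accounting for $\Delta_x^A=R_x(-\ell_A(x))$ and $\ell_A(sx)=\ell_A(x)+1$. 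Using exactness of $B_s\otimes-$ and the compatibility of $\Gamma^A$ with the chosen order, one gets short exact sequences relating $\Gamma_{\ge i}^A(B_s\otimes B)$ to $B_s\otimes\Gamma_{\ge i}^A(B)$ and $B_s\otimes\Gamma_{\ge i}^A(B)$, whose subquotients can be read off from the $R_x$-computation; this simultaneously shows $B_s\otimes B\in\FA$ (so that the multiplicities $[B_s\otimes B:\Delta_y^A(i)]_A$ are well-defined) and yields the recursion.

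The bookkeeping of the shifts and of which of $x$, $sx$ sits in which $\ell_A$-level is where the main obstacle lies: in the untwisted case one always has $\ell(sx)=\ell(x)\pm1$ with $sx>x$ meaning $\ell(sx)=\ell(x)+1$, and the standard order can be chosen so that $x$ and $sx$ are adjacent; in the twisted case the analogue is supplied by Remark \ref{rem:parity} (so $\ell_A(sx)$ and $\ell_A(x)$ have opposite parities and $sx>_A x$ forces $\ell_A(sx)=\ell_A(x)+1$ exactly as needed) and by Corollary \ref{cor:ordretotal} (which provides adjacency), so the argument does go through, but one must be careful that multiplying on the \emph{left} by $s$ is what behaves well — recall from Remark \ref{rmq:distanceun} that $sx<_A x\iff \ell_A(sx)=\ell_A(x)-1$, whereas right multiplication is badly behaved (Example \ref{ex:biclos}). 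I expect the remaining difficulty to be purely notational: verifying that the two displayed equalities are the correct translation of the short exact sequence $0\to R_{sx}\to B_s\otimes R_x\to R_x\to 0$ once all shifts $(-\ell_A(-))$, the shift $(1)$ in $B_s$, and the homological/grading shift $(i)$ are inserted. Since $B_s\otimes-$ is exact and sends $\FA$ into $\FA$ by the above, and since every Bott–Samelson bimodule is an iterated tensor product of $B_s$'s applied to $R=R_e\in\FA$, this also reproves (and refines) the fact that all of $\mathcal{B}$ lies in $\FA$.
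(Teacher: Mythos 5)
Your overall route is exactly the paper's: the proof given there is literally ``argue as in Soergel's proof of the standard filtration case, using the $Z$-property (Proposition \ref{prop:z}) together with Remarks \ref{rmq:distanceun} and \ref{rem:parity}'', and you have correctly identified every combinatorial input needed to transfer that argument (left multiplication by $s$ changes $\ell_A$ by exactly $\pm 1$, the parity remark, the $(A,s)$-compatible orders of Corollary \ref{cor:ordretotal}, exactness of $B_s\otimes -$). One small imprecision: $\{w\mid \ell_A(w)\geq i\}$ is an up-set for $<_A$ but need not be a terminal segment of your chosen linear extension; what makes this harmless is that the refined filtration and its multiplicities are independent of the linear extension (the no-nontrivial-extensions-between-incomparables argument of Remark \ref{rem:refine}).

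The substantive issue is the step you flag as the main obstacle and then dismiss as ``purely notational''. The single exact sequence $0\to R_{sx}(-1)\to B_s\otimes R_x\to R_x(1)\to 0$ accounts only for the contributions of the subquotients $\Delta_x^A(\cdot)$ of $B$, i.e.\ for the terms $[B:\Delta_x^A(i)]_A$ and $[B:\Delta_x^A(i-1)]_A$. The remaining terms $[B:\Delta_{sx}^A(i+1)]_A$ and $[B:\Delta_{sx}^A(i)]_A$ come from $B_s\otimes\Delta_{sx}^A(\cdot)$, i.e.\ from tensoring with the \emph{upper} element of the $s$-pair, and there the analogous sequence $0\to R_x(-1)\to B_s\otimes R_{sx}\to R_{sx}(1)\to 0$ has its submodule supported on the \emph{lower} stratum, so you cannot read the support filtration off it: $\Gamma_{\geq \ell_A(sx)}$ is a subfunctor, and the copy of $R_{sx}$ appearing as a quotient in that sequence is not a submodule. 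What is actually needed is that $B_s\otimes R_{sx}\cong\mathcal{O}\bigl(\mathrm{Gr}(x)\cup\mathrm{Gr}(sx)\bigr)(1)$ carries \emph{two} distinguished submodules, $R_x(-1)$ and $R_{sx}(-1)$ (the ideals of the two components), whence $\Gamma_{\{sx\}}(B_s\otimes R_{sx})\cong R_{sx}(-1)$ with quotient $R_x(1)$; it is this second sequence that turns a subquotient $\Delta_{sx}^A(k)$ of $B$ into $\Delta_{sx}^A(k-1)$ plus $\Delta_x^A(k)$, producing the shifts as stated. Assigning sub-to-lower and quotient-to-upper from the first sequence instead yields $[B:\Delta_{sx}^A(i-1)]_A$ in the first formula, which is false: already for $B=B_s$ (untwisted) one has $B_sB_s\cong B_s(1)\oplus B_s(-1)$, so $[B_sB_s:\Delta_s(i)]=\delta_{i,1}+\delta_{i,-1}$, matching the stated recursion but not the naive one. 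So the shift bookkeeping is a genuine (if short) computation, not notation; once it is done, the rest of your plan goes through as in Soergel.
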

\begin{proof}
Since by Proposition \ref{prop:z} the order $<_A$ satisfies Deodhar's property Z of Coxeter groups, one can argue exactly as in the proof of \cite[Proposition 5.7]{S} using also Remarks \ref{rmq:distanceun} and \ref{rem:parity}. 
\end{proof}

\begin{remark}
Let us say a bit more on the importance of these filtrations in Soergel's approach. The above filtration generalizes Soergel's \defn{standard filtration} \cite[Proposition 5.7]{S}, which corresponds to the case where $A=\emptyset$. Soergel showed that $\mathcal{B}\subseteq\mathcal{F}_{\Delta}^\emptyset$ and that given $B\in\mathcal{B}$, the map $\mathrm{ch}$ from Theorem \ref{thm:soergelcat} may be described as
\begin{equation}\label{stand}
\mathrm{ch}(\left\langle B\right\rangle)=\sum_{x\in\W}{\sum_{i\in\mathbb{Z}} [B:\Delta_x^\emptyset(i)]_\emptyset v^{i+\ell(x)}} T_{x}.
\end{equation}
Soergel also introduced a \defn{costandard filtration} corresponding to the case where $A=\T$, and furthermore showed that the map $\mathrm{ch}$ from Theorem \ref{thm:soergelcat} may be described as
\begin{equation}\label{costand}
\mathrm{ch}(\left\langle B\right\rangle)=\sum_{x\in\W}{\sum_{i\in\mathbb{Z}} [B:\Delta_x^\T(i)]_\T v^{i-\ell(x)}} T_{x^{-1}}^{-1}.\end{equation}
Hence if Soergel's conjecture holds, i.e. setting $B=B_w$ and $C'_w=\mathrm{ch}(\left\langle B_w\right\rangle)$, the Kazhdan-Lusztig polynomials can be interpreted as graded multiplicities in these filtrations (hence in particular they are nonnegative). 

\end{remark}

\begin{corollary}
We have $\mathcal{B}\subseteq \FA$.
\end{corollary}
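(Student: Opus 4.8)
The plan is to deduce $\mathcal{B}\subseteq\FA$ from Proposition~\ref{prop:eq} by a standard induction on the generators of the category $\mathcal{B}$. First I would observe that $\FA$ is closed under graduation shifts and finite direct sums: both operations manifestly commute with the functors $\Gamma_{\geq i}^A$ and with passing to the subquotients $\Gamma_{\geq i/>i}^A$, and a finite direct sum of shifted copies of $R_x$'s with $\ell_A(x)=i$ remains of that form, so this is immediate from the definition of $\FA$. The base case is the monoidal unit $R=R_e$: since $\ell_A(e)=0$, one has $\Gamma_{\geq 0/>0}^A(R)\cong R_e=\Delta_e^A$ and all other subquotients vanish, so $R\in\FA$; more generally each $R_x$ lies in $\FA$ trivially.

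Next I would run the induction. Every object of $\mathcal{B}$ is, up to shift, a direct summand of a Bott--Samelson bimodule $B_s\otimes B_t\otimes\cdots\otimes B_u$, so by the closure properties just noted it suffices to show that each such Bott--Samelson bimodule lies in $\FA$ (a direct summand of an object of $\FA$ lies in $\FA$, since $\Gamma_{\geq i}^A$ is additive and the subquotients of a summand are summands of the subquotients, hence again direct sums of shifted $R_x$'s in the right degree). Arguing by induction on the number of tensor factors: $R\in\FA$ by the base case, and if $B=B_t\otimes\cdots\otimes B_u\in\FA$ then Proposition~\ref{prop:eq} gives directly that $B_s\otimes B\in\FA$ for any $s\in\S$. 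Iterating, every Bott--Samelson bimodule lies in $\FA$, hence so does every Soergel bimodule.

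I do not expect a genuine obstacle here: the entire content has been packaged into Proposition~\ref{prop:eq}, whose proof in turn rests on Deodhar's property~Z (Proposition~\ref{prop:z}) together with the parity and ``distance one'' remarks (Remarks~\ref{rem:parity} and~\ref{rmq:distanceun}) that replace the corresponding untwisted facts used in \cite[Proposition~5.7]{S}. The only point requiring a word of care is checking that a \emph{direct summand} of an object of $\FA$ is again in $\FA$; this is where one uses that $\mathcal{R}$ is Krull--Schmidt so that $R_x$'s appearing in the subquotients decompose compatibly with the summand decomposition, exactly as in Soergel's original argument. With that in hand the corollary follows formally.
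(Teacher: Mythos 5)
Your argument is correct and coincides with the paper's: closure of $\FA$ under shifts, sums and (via additivity of $\Gamma_{\geq i}^A$ and the Krull--Schmidt property of $\mathcal{R}$) direct summands, together with an inductive application of Proposition~\ref{prop:eq} starting from $R\in\FA$ to handle Bott--Samelson bimodules. Nothing is missing and no different route is taken.
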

\begin{proof}
The functor $B\mapsto \gi(B)/\gii(B)$ preserves direct sums since both $\gi(-)$ and $\gii(-)$ do. Together with the fact that the Krull-Schmidt property is satisfied in $\mathcal{R}$, it follows that any direct summand of $B\in\FA$ also lies in $\FA$. Using Proposition \ref{prop:eq} inductively we have that any Bott-Samelson bimodule lies in $\FA$. But indecomposable objects in $\mathcal{B}$ are up to isomorphism all (shifts of) direct summands of Bott-Samelson bimodules. 
\end{proof}

\begin{definition}
For $A\in\Bic$, $B\in\mathcal{B}$, we call the filtration of $B$ by support twisted by $A$ as introduced above the \defn{A-filtration} of $B$. 
\end{definition}

\begin{remark}[Refining the $A$-filtration]\label{rem:refine}
Arguing as in \cite[Lemma 6.3]{S}, we can refine our filtration as follows. Let $\{w_i\}_{i\in I}$ be a linear extension of $<_A$, where $I$ is an interval in $\mathbb{Z}$. Writing $\Gamma_{\geq w_j}^A$ for $\Gamma_{\{w_i\in\W| i\geq j\}}^A$ we then consider for $B\in\mathcal{B}$ the subbimodule $B^j:=\Gamma_{\geq w_j}^A B$. We then get a filtration
$$\cdots \subseteq B^{j+1} \subseteq B^j \subseteq B^{j-1}\subseteq \cdots$$
such that $\Gamma_{\geq w_j/ > w_j}^A(B):= B^j / B^{j+1}\cong \bigoplus_{k} \Delta_{w_j}^A(n_k)$ and the graded multiplicities turn out to be the same as in the $A$-filtration of $B$. In particular this is independent of the chosen linear extension of $<_A$. The key point is that there is no nontrivial extension between $R_x$ and $R_y$ for incomparable $x$ and $y$ in $<_A$; indeed Soergel showed that one can have a nontrivial extension only if $x$ and $y$ differ by a reflection (use \cite[Lemma 5.8]{S} together with the isomorphism $R_x\cong \mathcal{O}(\mathrm{Gr}(x))$), which by Remark \ref{rem:parity} cannot happen for incomparable elements in $<_A$.  

\end{remark}

For later use we need to understand how the support of a bimodule in $\FA$ behaves when tensoring with a $B_s$, $s\in\S$. The proof of the following is similar to \cite[Proof of 5.7]{S} and \cite[Proposition 6.5]{Will} for the Bruhat order.

\begin{proposition}\label{prop:mpair}
Let $A\in\Bic$, $s\in\S$. Let $\{w_i\}_{i\in I}$ be an $(A,s)$-compatible total order on $\W$ for some interval $I\subseteq\mathbb{Z}$. For $B\in\FA$ and $m$ even we have the following isomorphisms of graded bimodules
\begin{enumerate}
\item $\Gamma_{\geq w_m}^A (B_s B)\cong B_s (\Gamma_{\geq w_m}^A B)$,
\item $\Gamma_{\geq w_m/ >w_{m+1}}^A (B_s B)\cong B_s (\Gamma_{\geq w_m / > w_{m+1}}^A B)$.
\end{enumerate} 

\end{proposition}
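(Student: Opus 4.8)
The statement concerns how the twisted support filtration interacts with tensoring by $B_s$ on the left, when the enumeration $\{w_i\}_{i\in I}$ is $(A,s)$-compatible and $m$ is even (so that $w_m \in R_{A,s}^{\uparrow}$ and $sw_m = w_{m+1}$). The plan is to follow the strategy of \cite[Proof of 5.7]{S} and \cite[Proposition 6.5]{Will} verbatim, replacing the Bruhat order by $<_A$ at every step; the only inputs needed beyond Soergel's original argument are that $<_A$ satisfies Deodhar's property Z (Proposition \ref{prop:z}), the parity observation of Remark \ref{rem:parity}, and the structural facts about $R_x$-subquotients recorded in Remark \ref{rem:refine}. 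I will first establish (1) and then deduce (2) as a short consequence.

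\textbf{Proof of (1).} For the inclusion $B_s(\Gamma_{\geq w_m}^A B)\subseteq \Gamma_{\geq w_m}^A(B_s B)$, one uses the standard fact that tensoring with $B_s = R\otimes_{R^s}R(1)$ can only enlarge supports in a controlled way: $\mathrm{Gr}(x)$ appearing in the support of an element of $B_s B$ forces $\mathrm{Gr}(x)$ or $\mathrm{Gr}(sx)$ to appear in the support of the corresponding element of $B$ (this is Soergel's computation of the support behaviour of $B_s\otimes -$, which is purely geometric and order-independent). Since $\{w_i\}$ is $(A,s)$-compatible and $m$ is even, the set $\{w_i \mid i\geq m\}$ is stable under the operations ``$x\mapsto sx$ when $sx>_A x$'' in the relevant range: if $x = w_i$ with $i\geq m$ and $sx<_A x$, then $sx = w_{i-1}$, and property Z together with $(A,s)$-compatibility (Corollary \ref{cor:ordretotal}) forces $i-1\geq m$ as well (here is where evenness of $m$ is used, to avoid $i-1 = m-1$). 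Hence support of an element of $B_s(\Gamma_{\geq w_m}^A B)$ lies in $\mathrm{Gr}(\{w_i\mid i\geq m\})$, giving one inclusion. For the reverse inclusion one argues that $\Gamma_{\geq w_m}^A(B_s B)$ is generated as a subbimodule by $B_s$ times its ``$\Gamma_{\geq w_m}^A$-part of $B$'', exactly as in \cite{S}: one localizes, uses that over the generic point of each $\mathrm{Gr}(x)$ the bimodule $B_s\otimes B$ decomposes compatibly, and invokes Remark \ref{rem:refine} (no nontrivial extensions between $R_x$ and $R_y$ for $x,y$ incomparable in $<_A$, via \cite[Lemma 5.8]{S}) to patch the local data. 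This yields equality of the two subbimodules.

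\textbf{Proof of (2).} Given (1), applied both at $w_m$ and at $w_{m+2}$ (note $m+2$ is again even), we get $\Gamma_{\geq w_m}^A(B_s B)\cong B_s(\Gamma_{\geq w_m}^A B)$ and $\Gamma_{\geq w_{m+2}}^A(B_s B)\cong B_s(\Gamma_{\geq w_{m+2}}^A B)$. Since tensoring by the flat right $R$-module $B_s = R\otimes_{R^s}R(1)$ is exact, quotienting gives $\Gamma_{\geq w_m/>w_{m+1}}^A(B_s B)\cong B_s(\Gamma_{\geq w_m/>w_{m+1}}^A B)$, because $\Gamma_{\geq w_m/>w_{m+1}}^A = \Gamma_{\geq w_m}^A/\Gamma_{\geq w_{m+2}}^A$ picks out precisely the two consecutive layers $w_m, w_{m+1}$, which are the $s$-orbit pair. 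This is the desired isomorphism.

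\textbf{Main obstacle.} The routine-looking part---Soergel's localization and patching argument for (1)---is where all the real content sits, and the one genuinely new point to check carefully is that the $(A,s)$-compatibility of the enumeration, combined with property Z, makes the sets $\{w_i\mid i\geq m\}$ with $m$ even behave towards ``$x\mapsto sx$'' exactly as the Bruhat-order truncations $\{y\mid y\geq w\}$ do in \cite{S}; without evenness the boundary pair $\{w_{m-1},w_m\}$ could be split, breaking the argument. Everything else---exactness of $B_s\otimes-$ on the right, the extension-vanishing from Remark \ref{rem:refine}, the generic-point decomposition---transfers mutatis mutandis from \cite[Proof of 5.7]{S} and \cite[Proposition 6.5]{Will}.
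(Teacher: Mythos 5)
Your overall strategy is the paper's: adapt \cite[Proof of 5.7]{S} and \cite[Proposition 6.5]{Will}, with the genuinely new input being that $(A,s)$-compatibility together with evenness of $m$ makes the truncation sets $s$-stable, plus exactness of $B_s\otimes -$; and your deduction of (2) from (1), via the short exact sequence relating the layers at $w_m$ and $w_{m+2}$, is exactly what the paper does. Your first inclusion $B_s(\Gamma_{\geq w_m}^A B)\subseteq \Gamma_{\geq w_m}^A(B_sB)$ is also sound: by exactness $B_s(\Gamma_{\geq w_m}^A B)$ is a subbimodule of $B_sB$, and since $\{w_i\mid i\geq m\}$ is $s$-stable it has a filtration with subquotients shifts of $R_{w_j}$, $j\geq m$, hence support in $\bigcup_{j\geq m}\mathrm{Gr}(w_j)$.

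The gap is the reverse inclusion of (1), which you dispatch with ``one localizes \dots and invokes Remark \ref{rem:refine} to patch the local data.'' As written this is not an argument, and the tools you name are not the ones that close the step: the extension-vanishing between $<_A$-incomparable elements in Remark \ref{rem:refine} plays no role here. What is needed, and what the paper does, is to tensor the short exact sequence $0\to \Gamma_{\geq w_m}^A B\to B\to B/\Gamma_{\geq w_m}^A B\to 0$ with the exact functor $B_s\otimes-$ and to observe that the quotient $B_s(B/\Gamma_{\geq w_m}^A B)$ has a filtration whose subquotients are shifts of $R_{w_j}$ with $j<m$ only; this uses the $s$-stability of the \emph{lower} half $\{w_i\mid i<m\}$ (again a consequence of $m$ being even), a point your sketch never invokes, since $B_sR_{w_j}$ is an extension contributing both $R_{w_j}$ and $R_{sw_j}$. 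Then any element of $B_sB$ whose support lies in $\bigcup_{j\geq m}\mathrm{Gr}(w_j)$ must map to zero in this quotient: a nonzero element of a bimodule filtered by shifts of $R_{w_j}$, $j<m$, generates a subbimodule whose support contains some full irreducible $\mathrm{Gr}(w_j)$ with $j<m$, which cannot be contained in the relevant finite union of graphs $\mathrm{Gr}(w_i)$, $i\geq m$. This gives $\Gamma_{\geq w_m}^A(B_sB)\subseteq B_s(\Gamma_{\geq w_m}^A B)$; with this filtration-plus-support argument supplied in place of the localization gesture, your proof coincides with the paper's.
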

\begin{proof}
Consider the short exact sequence $$0\longrightarrow \Gamma_{\geq w_m}^A B \longrightarrow B \longrightarrow B/\Gamma_{\geq w_m}^A B \longrightarrow 0.$$
Since $B\in\FA$, we have that $\Gamma_{\geq w_m}^A B$ (resp. $B/\Gamma_{\geq w_m}^A B$) has an $A$-filtration with subquotients given by shifts of $\Delta_{w_j}^A$ for $j \geq m$ (resp. $j<m$). Note that both $\{w_m, w_{m+1},\dots\}$ and $\{\dots w_{m-2}, w_{m-1}\}$ are $s$-stable since our total order is $(A,s)$-compatible and $m$ is even. Now since $R$ is a free right $R^s$-module of rank two we have that the functor $B_s\otimes -:\mathcal{R}\rightarrow \mathcal{R}$ is exact hence tensoring the short exact sequence above we get another short exact sequence 
\begin{equation}\label{eq:ses}
0\longrightarrow B_s(\Gamma_{\geq w_m}^A B) \longrightarrow B_sB \longrightarrow B_s(B/\Gamma_{\geq w_m}^A B) \longrightarrow 0.
\end{equation}

Now $R\otimes_{R^s} R_{w_j}$ is indecomposable and is a nontrivial extension of $R_{sw_j}$ and $R_{w_j}$ as can be seen by tensoring the short exact sequence $$0\longrightarrow R_s(-2)\longrightarrow R\otimes_{R^s} R\longrightarrow R\longrightarrow 0$$
on the right by $R_{w_j}$ (which defines an invertible functor). Hence $B_s(\Gamma_{\geq w_m}^A B)$ has a filtration with subquotients given by (shifts) of $R_{w_j}$ for $j\geq m$, while $B_s(B/\Gamma_{\geq w_m}^A B)$ has a filtration with subquotients given by (shifts) of $R_{w_j}$ for $j <m$. But since $B_s B\in\FA$ and any nonzero element in $R_{w_j}$ generates a subbimodule isomorphic to $R_{w_j}$ as ungraded bimodule, hence has support $\mathrm{Gr}(w_j)$, it follows that $B_s(\Gamma_{\geq w_m}^A B)$ is exactly the subbimodule of $B_s B$ containing the elements whose support is included in $\bigcup_{j\geq m} \mathrm{Gr}(w_j)$. The short exact sequence (\ref{eq:ses}) can therefore be rewritten as
$$0\longrightarrow \Gamma_{\geq w_m}^A (B_s B) \longrightarrow B_sB \longrightarrow B_sB/ \Gamma_{\geq w_m}^A(B_s B) \longrightarrow 0,$$
and $\Gamma_{\geq w_m}^A (B_s B)\cong B_s (\Gamma_{\geq w_m}^A B)$ which proves $(1)$. Now considering the exact sequence 
$$ 0\longrightarrow B_s(\Gamma_{\geq w_{m+2}}^A B)\longrightarrow B_s(\Gamma_{\geq w_{m}}^A B)\longrightarrow B_s(\Gamma_{\geq w_{m}/ > w_{m+1}} B)\longrightarrow 0,$$
we get $(2)$ using the isomorphisms $$\Gamma_{\geq m}^A (B_s B)\cong B_s (\Gamma_{\geq w_m}^A B),~\Gamma_{\geq w_{m+2}}^A (B_s B)\cong B_s (\Gamma_{\geq w_{m+2}}^A B).$$
\end{proof}

\subsection{Three-parameter Kazhdan-Lusztig positivity}
The aim of this subsection is to prove: 
\begin{theorem}[Three-parameter Kazhdan-Lusztig positivity]\label{threeparam}
Let $w\in \W$, $A\in\Bic$. Write $C'_w=\sum_{x\in\W} h_{x,w}^A T_{x,A}$. Then $${h_{x,w}^A}=\sum_{i\in\mathbb{Z}} [B_w:\Delta_x^A(i)]_A v^{i+\ell_A(x)}.$$ 
In particular the generalized Kazhdan-Lusztig polynomials $h_{x,w}^A\in\mathbb{Z}[v,v^{-1}]$ have nonnegative coefficients.
\end{theorem}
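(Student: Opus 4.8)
The plan is to follow Soergel's strategy for the untwisted standard filtration (\cite[Proposition 5.7 and Theorem 5.3]{S}), using the category $\FA$ and its multiplicities $[-:\Delta_x^A(i)]_A$ as the combinatorial replacement for the standard filtration, and to match these multiplicities with the structure constants of $C'_w$ in the basis $\{T_{x,A}\}$ via Soergel's character map $\mathrm{ch}$. Concretely, I would define, for $B\in\FA$, the element
$$
\mathrm{ch}_A(\langle B\rangle):=\sum_{x\in\W}\Bigl(\sum_{i\in\mathbb{Z}}[B:\Delta_x^A(i)]_A\, v^{\,i+\ell_A(x)}\Bigr) T_{x,A}\in\H,
$$
and the goal becomes to prove that $\mathrm{ch}_A(\langle B\rangle)=\mathrm{ch}(\langle B\rangle)$ for all $B\in\mathcal{B}$. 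Granting that, the theorem follows immediately: by Soergel's conjecture (Theorem \ref{sconj}) and the fact that $\mathrm{ch}$ is a left inverse of $\mathcal{E}$ (Theorem \ref{thm:soergelcat}), we have $\mathrm{ch}(\langle B_w\rangle)=C'_w$, so $h_{x,w}^A=\sum_i[B_w:\Delta_x^A(i)]_A v^{i+\ell_A(x)}$, and nonnegativity is then manifest since the coefficients are multiplicities.

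The identity $\mathrm{ch}_A=\mathrm{ch}$ on $\mathcal{B}$ should be proved by induction on Bott--Samelson bimodules, i.e. it suffices to check that both sides transform the same way under $B\mapsto B_s\otimes B$. On the categorical side, Proposition \ref{prop:eq} gives the two recursions for the multiplicities $[B_s\otimes B:\Delta_{sx}^A(i)]_A$ and $[B_s\otimes B:\Delta_x^A(i)]_A$ in terms of those of $B$ whenever $sx>_A x$; summing these against $v^{i+\ell_A(\cdot)}T_{\cdot,A}$ and using $\ell_A(sx)=\ell_A(x)+1$ (Remark \ref{rmq:distanceun}, since $sx>_A x$ with $s\in\S$) together with the multiplication rule $T_s T_{x,A}=T_{sx,A}$ of Lemma \ref{lem:multform}, one reorganizes the sum to see that $\mathrm{ch}_A(\langle B_s\otimes B\rangle)$ equals $(vT_s+v)\cdot\mathrm{ch}_A(\langle B\rangle)$ — exactly the effect of $B\mapsto B_s\otimes B$ on $\mathrm{ch}$, since $\mathcal{E}(vT_s+v)=\langle B_s\rangle$ and $\mathrm{ch}\circ\mathcal{E}=\mathrm{id}$. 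The base case $B=R=\Delta_e^A$ is trivial, as $\ell_A(e)=0$ and $T_{e,A}=1$, giving $\mathrm{ch}_A(\langle R\rangle)=1=\mathrm{ch}(\langle R\rangle)$. Since every object of $\mathcal{B}$ is a direct summand of a Bott--Samelson bimodule and $\mathrm{ch}_A$ is additive (the functor $B\mapsto\gi(B)/\gii(B)$ preserves direct sums, so the multiplicities are additive), this propagates $\mathrm{ch}_A=\mathrm{ch}$ to all of $\mathcal{B}$.

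The main obstacle I anticipate is the bookkeeping in the recursion step: unlike in the Bruhat case, the pairing of $\W$ into $\{x, sx\}$ respects $<_A$ only in the "two-at-a-time" sense captured by the $(A,s)$-compatible orders of Corollary \ref{cor:ordretotal}, and one must be careful that multiplying by $v^{\ell_A(\cdot)}$ and rewriting $T_{sx,A}=T_s T_{x,A}$ is only valid on the "upper" half $R_{A,s}^{\uparrow}$ (Remark \ref{rmq:distanceun} genuinely uses that $s$ acts on the left). The exactness of $B_s\otimes-$ and the good behaviour of $\Gamma_{\geq w_m}^A$ under $B_s\otimes-$ for $m$ even (Proposition \ref{prop:mpair}) are what make the two recursions of Proposition \ref{prop:eq} available in the first place, so the argument is really a transcription of Soergel's, with Deodhar's Z-property (Proposition \ref{prop:z}) and the parity remark (Remark \ref{rem:parity}) doing the work that the Bruhat order does in the classical setting; I would check carefully that no step secretly uses a property of $\ell_A$ (such as nonnegativity, or $\ell_A(xs)=\ell_A(x)\pm1$) that fails for twisted orders.
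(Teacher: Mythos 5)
Your proposal is essentially the paper's proof: the identity $\mathrm{ch}_A(\langle B_s\cdots B_u\rangle)=C'_s\cdots C'_u$ for Bott--Samelson bimodules is the paper's Proposition \ref{prop:bottsamelson}, established by exactly the induction you describe (Proposition \ref{prop:eq}, Lemma \ref{lem:multform}, and $\ell_A(sx)=\ell_A(x)+1$), and the passage from Bott--Samelson bimodules to $B_w$ is the paper's proof of Theorem \ref{threeparam}. The one step to flesh out is that ``additivity propagates the identity to all of $\mathcal{B}$'' is not automatic for direct summands; the paper makes it precise by inducting on $\ell(w)$ via the Krull--Schmidt decomposition $B_s B_t\cdots B_u\cong B_w\oplus\bigoplus_{z<w}P_z\cdot B_z$, identifying $C'_sC'_t\cdots C'_u=C'_w+\sum_{z<w}P_zC'_z$ through Soergel's conjecture, and subtracting off the lower terms.
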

The proof of the theorem will occupy the remainder of the section. Theorem \ref{threeparam} was proven by Dyer and Lehrer for finite Weyl groups \cite[Theorem 2.8]{DL}. Dyer then showed \cite[Conjecture 7(b)]{Dyerrep} (relying on partially unpublished result) that the theorem would hold for finite Coxeter groups if Soergel's conjecture holds. Dyer and Lehrer and Dyer proved the theorem in the following form: 

\begin{corollary}
Let $w,y\in\W$. Then $$C'_w T_y\in\sum_{z\in\W} \mathbb{Z}_{\geq 0}[v^{\pm 1}] T_z.$$ 
\end{corollary}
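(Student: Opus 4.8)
The plan is to derive the Corollary directly from Theorem \ref{threeparam} together with Lemma \ref{lem:genbase}(3). First I would fix $w,y\in\W$ and set $A:=N(y)$; by Proposition \ref{prop:dyercaract} (or rather the fact recalled after Example \ref{ex:biclos}) the set $N(y)$ is a finite biclosed set, so $A\in\Bic$ and Theorem \ref{threeparam} applies. Writing $C'_w=\sum_{x\in\W} h_{x,w}^A T_{x,A}$, the theorem gives $h_{x,w}^A=\sum_{i\in\mathbb{Z}}[B_w:\Delta_x^A(i)]_A v^{i+\ell_A(x)}$, and since the multiplicities $[B_w:\Delta_x^A(i)]_A$ are nonnegative integers we obtain $h_{x,w}^A\in\mathbb{Z}_{\geq 0}[v^{\pm 1}]$ for every $x\in\W$.

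Next I would multiply the expansion $C'_w=\sum_{x\in\W} h_{x,w}^A T_{x,A}$ on the right by $T_y$. By Lemma \ref{lem:genbase}(3) we have $T_x T_y^{-1}=T_{xy^{-1},N(y)}$, equivalently $T_{x,N(y)}=T_{xy}T_y^{-1}$ after reindexing $x\mapsto xy$; so $T_{x,A}T_y=T_{xy}T_y^{-1}T_y=T_{xy}$ for all $x\in\W$. Substituting $z:=xy$, the map $x\mapsto z$ is a bijection of $\W$, and therefore
$$C'_w T_y=\sum_{x\in\W} h_{x,w}^A\, T_{x,A}T_y=\sum_{x\in\W} h_{x,w}^A\, T_{xy}=\sum_{z\in\W} h_{zy^{-1},w}^{N(y)}\, T_z.$$
Each coefficient $h_{zy^{-1},w}^{N(y)}$ lies in $\mathbb{Z}_{\geq 0}[v^{\pm 1}]$ by the previous paragraph, which is exactly the assertion $C'_w T_y\in\sum_{z\in\W}\mathbb{Z}_{\geq 0}[v^{\pm 1}]T_z$.

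I do not anticipate a genuine obstacle here, since all the work has been done in Theorem \ref{threeparam}; the only points to be careful about are purely bookkeeping ones. First, one must invoke the classification of finite biclosed sets as inversion sets (stated in the text after Example \ref{ex:biclos}, following \cite[2.3]{Dyerweak}) to guarantee $N(y)\in\Bic$ so that the twisted filtration and Theorem \ref{threeparam} are available for $A=N(y)$. Second, one should check that the reindexing $x\mapsto xy$ really is a bijection (it is, being right translation in a group) and that the identity $T_{x,A}T_y=T_{xy}$ is applied in the correct direction; this follows formally from Lemma \ref{lem:genbase}(3) applied to the pair $(xy,y)$, giving $T_{xy}T_y^{-1}=T_{xy\cdot y^{-1},N(y)}=T_{x,N(y)}$. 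With these checks in place the Corollary is immediate.
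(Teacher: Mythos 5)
Your argument is correct and is essentially the paper's own proof: the paper likewise deduces the corollary from Theorem \ref{threeparam} with $A=N(y)$ via the identity $T_{xy}T_y^{-1}=T_{x,N(y)}$ of Lemma \ref{lem:genbase}(3), only stated more tersely. Your version just spells out the reindexing $x\mapsto xy$ and the fact that $N(y)$ is biclosed (which, as you note, comes from the discussion around Example \ref{ex:biclos} rather than Proposition \ref{prop:dyercaract}), so there is nothing substantive to add.
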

\begin{proof}
For any $x\in\W$, we have that $T_{xy} T_y^{-1}=T_{x,N(y)}$ by Lemma \ref{lem:genbase} $(3)$, hence this comes here as a particular case of Theorem \ref{threeparam}.
\end{proof}

\begin{remark}\label{rmq:multungraded}
Let $\varphi:\H\rightarrow \mathbb{Z}[\W]$ denote the specialization morphism $v\mapsto 1$. For any $x\in\W$, $A\in\Bic$ we have $\varphi(T_{x,A})=x$. For $A,B\in\Bic$, $x,w\in\W$ specializing $C_w'$ in Theorem \ref{threeparam} we get the equality $$\sum_{i\in\mathbb{Z}} [B_w:\Delta_x^A(i)]_A=\sum_{i\in\mathbb{Z}} [B_w:\Delta_x^B(i)]_B.$$
In particular, forgetting the grading we have that the ungraded multiplicites of the bimodule twisted by $x$ coincide in all twisted filtrations. This should remain the reader about twisted Verma modules: in the finite Weyl group case, our modules $\Delta_x$ might be thought of as corresponding to the twisted Verma modules (see \cite{AndLaur}) which have the same character as their corresponding untwisted Verma module but different module structure, while $B_w$ is the analogue of a projective module. The suitable category in which to have an analogue of a twisted standard module is rather the bounded homotopy category $\Kb$ of Soergel's category in which one has a Rouquier complex associated to $T_{x,A}$ (see Section \ref{sec:libwil}). 
\end{remark}
As a consequence of the remark above (set $B=\emptyset$) or by using the fact that a Bott-Samelson bimodule $B_s B_t\cdots B_u$ has support included in $\mathrm{Gr}(\{e,s\}\{e,t\}\cdots \{e,u\})$ we have:

\begin{lemma}\label{lem:mutiplbruhat}
Let $w\in\W$, $A\in\Bic$. We have $$[B_w: \Delta_x^A(i)]\neq 0 \Rightarrow x\leq w.$$
\end{lemma}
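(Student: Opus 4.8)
The plan is to deduce this from the corresponding statement for Soergel's untwisted standard filtration ($A=\emptyset$), which is part of Soergel's foundational results. Two routes are available, both already hinted at in the sentence preceding the lemma, and I would record the short one.

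\textbf{Route via support.} Fix a reduced expression $s_1\cdots s_k$ of $w$, so that $B_w$ is (up to shift) a direct summand of the Bott--Samelson bimodule $B_{s_1}\otimes\cdots\otimes B_{s_k}$. This Bott--Samelson bimodule has support contained in $\mathrm{Gr}\bigl(\{e,s_1\}\{e,s_2\}\cdots\{e,s_k\}\bigr)=\bigcup_{x}\mathrm{Gr}(x)$, the union being over all subwords $x$ of $s_1\cdots s_k$; by the subword property of the Bruhat order, every such $x$ satisfies $x\leq w$. Hence $\mathrm{supp}(B_w)\subseteq\bigcup_{y\leq w}\mathrm{Gr}(y)$. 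Now suppose $[B_w:\Delta_x^A(i)]_A\neq 0$ for some $i$. Using the refined $A$-filtration of Remark~\ref{rem:refine}, $\Delta_x^A(i)$ occurs as a summand of some subquotient $B^j/B^{j+1}$; since $B^j$ is a subbimodule of $B_w$ and quotients of coherent sheaves have support contained in that of the sheaf, we get $\mathrm{Gr}(x)=\mathrm{supp}(\Delta_x^A(i))\subseteq\mathrm{supp}(B^j)\subseteq\mathrm{supp}(B_w)\subseteq\bigcup_{y\leq w}\mathrm{Gr}(y)$. As the graphs $\mathrm{Gr}(y)$ are pairwise distinct irreducible closed subvarieties of $V\times V$, irreducibility of $\mathrm{Gr}(x)$ forces $\mathrm{Gr}(x)=\mathrm{Gr}(y)$ for some $y\leq w$, whence $x=y\leq w$.

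\textbf{Route via Remark~\ref{rmq:multungraded}.} Alternatively, taking $B=\emptyset$ in that remark gives $\sum_{i\in\mathbb{Z}}[B_w:\Delta_x^A(i)]_A=\sum_{i\in\mathbb{Z}}[B_w:\Delta_x^\emptyset(i)]_\emptyset$. All terms are nonnegative integers, so if some $[B_w:\Delta_x^A(i)]_A$ is nonzero then the left-hand side is nonzero, hence so is the right-hand side; thus $R_x$ (up to shift) occurs in Soergel's standard filtration of $B_w$, and by Soergel \cite[Theorem 5.15]{S} this implies $x\leq w$.

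I do not expect any genuine obstacle here: the statement is purely a consequence of the support estimate for Bott--Samelson bimodules together with the fact that forming subquotients can only shrink supports. The only point requiring a word of care is the passage from "$\Delta_x^A(i)$ occurs in the filtration" to "$\mathrm{Gr}(x)$ lies in the support of $B_w$", which is handled by the refined filtration of Remark~\ref{rem:refine} and the elementary behaviour of supports under subbimodules and quotients.
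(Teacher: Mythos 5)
Your proposal is correct and follows exactly the two justifications the paper itself sketches in the sentence preceding the lemma (the support estimate for Bott--Samelson bimodules, and the specialization argument of Remark~\ref{rmq:multungraded} with $B=\emptyset$); you simply supply the details the paper omits. The only remark worth adding is that the second route rests on Theorem~\ref{threeparam} (hence ultimately on Soergel's conjecture), so the elementary support argument is the one to record, exactly as you indicate.
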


We first prove:

\begin{proposition}\label{prop:bottsamelson}
Let $st\cdots u$ be a reduced expression of $w\in\W$, $A\in\Bic$. Write $C_s' C_t'\cdots C_u'=\sum_{x\in\W} q_{x,st\cdots u}^A T_{x,A}$. Then 
$${q_{x,st\cdots u}^A}=\sum_{i\in\mathbb{Z}} [B_s B_t\cdots B_u:\Delta_x^A(i)]_A v^{i+\ell_A(x)}.$$
\end{proposition}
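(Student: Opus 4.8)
The plan is to prove Proposition~\ref{prop:bottsamelson} by induction on $\ell(w)$, using the categorified multiplication formula of Proposition~\ref{prop:eq} on the bimodule side and the analogous recursion for the basis $\{T_{x,A}\}$ coming from Lemma~\ref{lem:multform} on the Hecke algebra side. The base case $w=e$ is immediate: $C'_e=T_e=T_{e,A}$ and $B_e=R$ has $A$-filtration consisting of the single subquotient $\Delta_e^A=R_e$ in degree $0$, so both sides equal $1$.

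For the inductive step, pick $s\in\S$ with $\ell(sw)=\ell(w)-1$, so $tu\cdots$ (the tail of the reduced word) is a reduced expression of $w':=sw$. First I would record the Hecke-algebra recursion. Write $C'_{t}\cdots C'_u=\sum_x q^A_{x,t\cdots u}T_{x,A}$; by induction this matches the $A$-filtration of $B':=B_tB_t\cdots B_u$. Now $C'_s\,y = (v+v^{-1})y + (\text{correction})$ is not quite the right shape; rather one uses $C'_s = v T_s + v$, so $C'_s\cdot T_{x,A}=v\,T_s T_{x,A}+v\,T_{x,A}$. One splits the sum over $x$ according to whether $sx>_A x$ or $sx<_A x$; by Remark~\ref{rem:parity} this is a genuine dichotomy and by Corollary~\ref{cor:ordretotal} the two classes pair up via $x\leftrightarrow sx$ in an $(A,s)$-compatible order. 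When $sx>_A x$, Lemma~\ref{lem:multform} gives $T_s T_{x,A}=T_{sx,A}$; applying $T_s$ again to $T_{sx,A}$ and using the quadratic relation $T_s^2=(v^{-2}-1)T_s+v^{-2}$ one obtains $T_s T_{sx,A}=(v^{-2}-1)T_{sx,A}+v^{-2}T_{x,A}$. Collecting coefficients, the pair $(x,sx)$ with $sx>_A x$ contributes, in $C'_s(C'_t\cdots C'_u)$, a combination of $T_{x,A}$ and $T_{sx,A}$ whose coefficients are exactly $v^{\pm1}$-shifts and sums of $q^A_{x,t\cdots u}$ and $q^A_{sx,t\cdots u}$ — the precise bookkeeping being the standard computation that underlies the formula $C'_sC'_y$ in terms of $\mu$-coefficients, but here with $T_{\bullet,A}$ in place of $T_\bullet$. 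This yields a closed recursion expressing $q^A_{x,st\cdots u}$ and $q^A_{sx,st\cdots u}$ (for $sx>_A x$) in terms of $q^A_{x,t\cdots u}$ and $q^A_{sx,t\cdots u}$.

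On the bimodule side, $B_sB_t\cdots B_u=B_s\otimes B'$ with $B'\in\FA$ by the Corollary following Proposition~\ref{prop:eq}, so $B_s\otimes B'\in\FA$ as well, and Proposition~\ref{prop:eq} gives, for $sx>_A x$,
\begin{align*}
[B_s\otimes B':\Delta_{sx}^A(i)]_A&=[B':\Delta_{sx}^A(i+1)]_A+[B':\Delta_x^A(i)]_A,\\
[B_s\otimes B':\Delta_{x}^A(i)]_A&=[B':\Delta_{sx}^A(i)]_A+[B':\Delta_x^A(i-1)]_A.
\end{align*}
Translating these through the generating-function substitution $\sum_i[\,\cdot\,:\Delta_y^A(i)]_A v^{i+\ell_A(y)}$, and remembering $\ell_A(sx)=\ell_A(x)+1$ when $sx>_A x$ (Remark~\ref{rmq:distanceun}, Lemma~\ref{lemma:edgar}), I would check that the resulting identities for the bimodule generating functions are \emph{formally identical} to the Hecke-algebra recursion derived in the previous paragraph. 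Since both the $q^A_{x,st\cdots u}$ and the bimodule generating functions satisfy the same recursion with the same initial data (the $\ell(w)-1$ case, valid by the inductive hypothesis), they coincide, completing the induction.

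The main obstacle, and the step deserving the most care, is the bookkeeping in the Hecke-algebra recursion: one must verify that multiplying by $C'_s$ in the twisted basis $\{T_{x,A}\}$ produces exactly the same recursion — with the same role of the parity/pairing $x\leftrightarrow sx$ — as multiplying by $B_s$ produces on the $A$-filtration via Proposition~\ref{prop:eq}. This is essentially the content of checking that the assignment $T_{x,A}\leftrightarrow \Delta_x^A$ intertwines left multiplication by $C'_s$ with $B_s\otimes-$, which is the twisted analogue of the compatibility Soergel establishes for the untwisted standard filtration; the subtlety is that, unlike in the Bruhat case, the condition $sx>_A x$ is governed by $\ell_A$ (and is a strict dichotomy only because of Remark~\ref{rem:parity}), and that Proposition~\ref{prop:eq} is already phrased precisely so as to make this go through. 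Once that compatibility is in hand the rest is a routine induction, and it will in fact simultaneously yield Theorem~\ref{threeparam} for indecomposables $B_w$ after decomposing a Bott--Samelson bimodule into indecomposables and matching with the expansion of $C'_sC'_t\cdots C'_u$ into the canonical basis.
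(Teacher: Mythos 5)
Your proposal is correct and follows essentially the same route as the paper: induction on $\ell(w)$, writing $C'_s=vT_s+v$, using Lemma~\ref{lem:multform} together with the quadratic relation to compute the coefficients of $T_{x,A}$ and $T_{sx,A}$ (for $sx>_A x$) in terms of $q^A_{x,t\cdots u}$ and $q^A_{sx,t\cdots u}$, and matching the resulting recursion with the two equalities of Proposition~\ref{prop:eq} via $\ell_A(sx)=\ell_A(x)+1$. The only difference is that the paper carries out the coefficient bookkeeping explicitly (both coefficients turn out to equal $v\,q^A_{x,t\cdots u}+v^{-1}q^A_{sx,t\cdots u}$), whereas you leave it as a check, but the argument is the same.
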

\begin{proof}
The proof is by induction on $\ell(w)$. If $w=e$, then the linear expansion of $1=T_e=T_{e,A}$ in the basis $\{T_{x,A}\}_{x\in\W}$ is simply given by $1=T_{e,A}$. For $x\neq e$ we have $[R:R_x(i)]_A=0$ for any $i\in\mathbb{Z}$. In case $x=e$ we have $[R:R(i)]_A\neq 0$ if and only if $i=0$ in which case the multiplicity is equal to $1$. Since $\ell_A(e)=0$ the claimed formula holds in that case. 

Now let $\ell(w)>0$ and assume that the claimed formula holds for $w'=sw$. Let $x\in \W$ such that $sx>_A x$. It implies that $\ell_A(sx)=\ell_A(x)+1$ (see Remark \ref{rmq:distanceun}). A direct computation using the equality $C'_s=vT_s+v$ together with Lemma \ref{lem:multform} shows that the coefficient $q_{sx,st\cdots u}^A$ of $T_{sx, A}$ in $C_s' C_t'\cdots C_u'$ is equal to $v q_{x, t\cdots u}^A+v^{-1} q_{sx,t\cdots u}^A$. For short we write $B'$ for $B_t\cdots B_u$ and $B$ for $B_s B'$. Now by induction we have that ${v q_{x, t\cdots u}^A+v^{-1} q_{sx,t\cdots u}^A}$ is equal to
\begin{eqnarray*}
& &v \sum_{i\in\mathbb{Z}}[B':\Delta_x^A(i)]_A v^{i+\ell_A(x)}+v^{-1} \sum_{i\in\mathbb{Z}}[B':\Delta_{sx}^A(i)]_A v^{i+\ell_A(sx)}\\
&=&\sum_{i\in\mathbb{Z}} [B':\Delta_x^A(i)]_A v^{i+\ell_A(x)+1}+[B':\Delta_{sx}^A(i+1)]_A v^{i+1+\ell_A(sx)-1}\\
&=&\sum_{i\in\mathbb{Z}} ([B':\Delta_x^A(i)]_A+[B':\Delta_{sx}^A(i+1)]_A) v^{i+\ell_A(sx)},\\
\end{eqnarray*}
And applying the first equality in Proposition \ref{prop:eq} we get that $${q_{sx,st\cdots u}^A}=\sum_{i\in\mathbb{Z}} [B:\Delta_{sx}^A(i)]_A v^{i+\ell_A(sx)},$$ as claimed. Similarly, the coefficient $q_{x,w}^A$ of $T_{x,A}$ in $C_s' C_t'\cdots C_u'$ is equal to $v q_{x,t\cdots u}^A+v^{-1} q_{sx,t\cdots u}^A$. As above we have by induction that ${v q_{x, t\cdots u}^A+v^{-1} q_{sx,t\cdots u}^A}$ equals
\begin{eqnarray*}
& &v\sum_{i\in\mathbb{Z}}[B':\Delta_x^A(i)]_A v^{i+\ell_A(x)}+v^{-1} \sum_{i\in\mathbb{Z}}[B':\Delta_{sx}^A(i)]_A v^{i+\ell_A(sx)}\\
&=&\sum_{i\in\mathbb{Z}} [B':\Delta_x^A(i-1)]_A v^{i-1+\ell_A(x)+1}+[B':\Delta_{sx}^A(i)]_A v^{i+\ell_A(sx)-1}\\
&=&\sum_{i\in\mathbb{Z}} ([B':\Delta_x^A(i-1)]_A+[B':\Delta_{sx}^A(i)]_A) v^{i+\ell_A(x)},\\
\end{eqnarray*}
And applying the second equality in Proposition \ref{prop:eq} we get that $${q_{x,st\cdots u}^A}=\sum_{i\in\mathbb{Z}} [B:\Delta_{x}^A(i)]_A v^{i+\ell_A(x)},$$ as claimed.
\end{proof}

\begin{proof}[Proof of Theorem \ref{threeparam}]

Let $st\cdots u$ be a reduced expression for $w$. Let us write a Krull-Schmidt decomposition of $B_s B_t\cdots B_u$ as $B_w\oplus\bigoplus_{z<w} P_z\cdot B_z$. Here the $P_z\in\mathbb{Z}[v,v^{-1}]$ are polynomials $\sum_{i\in\mathbb{Z}} n_i v^i$ with nonnegative coefficients and we use the notation $\left(\sum_{i\in\mathbb{Z}} n_i v^i\right)\cdot B:=\bigoplus_{i\in\mathbb{Z}} B(i)^{\oplus n_i}$ for a graded bimodule $B$. It follows from Soergel's categorification theorem (Theorem \ref{thm:soergelcat}) and Soergel's conjecture (Theorem \ref{sconj}) that $$C_s' C_t'\cdots C_u'=C_w'+\sum_{z<w} P_z C_z'.$$

The proof is by induction on $\ell(w)$. The case where $\ell(w)=0$ or even $\ell(w)=1$ is already contained in Proposition \ref{prop:bottsamelson}. Hence assume $\ell(w)>1$. Because the functor $B\mapsto \gi(B)/\gii(B)$ is additive we have that $[-,\Delta_x^A(i)]$ is additive, hence 
\begin{eqnarray*}
C_s'C_t'\cdots C_u' &=&\sum_{x\in\W}{\sum_{i\in\mathbb{Z}} [B_sB_t\cdots B_u:\Delta_x^A(i)]_A v^{i+\ell_A(x)}} T_{x,A}\\
&=&\sum_{x\in\W}{\sum_{i\in\mathbb{Z}} \left([B_w:\Delta_x^A(i)]_A+\sum_{z<w} {P_z}[B_z:\Delta_x^A(i)]_A\right) v^{i+\ell_A(x)}} T_{x,A}\\
&=&\sum_{x\in\W}{\sum_{i\in\mathbb{Z}} [B_w:\Delta_x^A(i)]_A v^{i+\ell_A(x)}} T_{x,A}+\sum_{z<w} P_z C'_z,\\
\end{eqnarray*}
Where the last equality holds by induction. But we also have that 
\begin{eqnarray*}
C_s' C_t'\cdots C_u'=C_w'+\sum_{z<w} P_z C_z',
\end{eqnarray*}
Hence comparing the two equalities we get that 
\begin{eqnarray}\label{eq:threeparam}
C_w'=\sum_{x\in\W}{\sum_{i\in\mathbb{Z}} [B_w:\Delta_x^A(i)]_A v^{i+\ell_A(x)}} T_{x,A}.
\end{eqnarray}
\end{proof}

\begin{corollary}\label{cor:mult}
Let $w\in\W$, $A\in\Bic$. Then 
$$
[B_w:\Delta_w^A(i)]_A = \left\{
    \begin{array}{ll}
        1 & \mbox{if } i=0\\
        0 & \mbox{otherwise.}
    \end{array}
\right.
$$
\end{corollary}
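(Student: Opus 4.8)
The plan is to extract this from Theorem~\ref{threeparam} together with the Kazhdan--Lusztig characterization of $C_w'$ in Theorem~\ref{thm:KL}. By Theorem~\ref{threeparam} we have $C_w'=\sum_{x\in\W} h_{x,w}^A T_{x,A}$ with $h_{x,w}^A=\sum_{i\in\mathbb{Z}}[B_w:\Delta_x^A(i)]_A\,v^{i+\ell_A(x)}$, and by Lemma~\ref{lem:mutiplbruhat} only $x\leq w$ contribute, so $h_{w,w}^A$ is the ``leading'' coefficient. The strategy is therefore to pin down $h_{w,w}^A$ directly and then read off the multiplicities $[B_w:\Delta_w^A(i)]_A$ from the identity $h_{w,w}^A=\sum_i [B_w:\Delta_w^A(i)]_A\,v^{i+\ell_A(w)}$, using that these multiplicities are nonnegative integers.

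First I would compute $h_{w,w}^A$. One clean way: pass to the specialization $v\mapsto 1$ of Remark~\ref{rmq:multungraded}, or better argue directly. Since the transition matrix from $\{T_x\}$ to $\{T_{x,A}\}$ is unitriangular with respect to any linear extension of the Bruhat order (Lemma~\ref{lem:genbase}(1), whose proof shows the diagonal entries are units), and since by Theorem~\ref{thm:KL} we have $C_w'\in H_w+\sum_{y<w} v\mathbb{Z}[v]H_y$, i.e. $C_w'\in T_w+\sum_{y<w}\mathbb{Z}[v,v^{-1}]T_y$ with the $T_w$-coefficient equal to $1$, comparing with $C_w'=\sum_x h_{x,w}^A T_{x,A}$ and expanding $T_{w,A}=T_w+(\text{lower terms})$ forces $h_{w,w}^A=1$. (This uses only that $h_{x,w}^A=0$ unless $x\leq w$, which is Lemma~\ref{lem:mutiplbruhat}, so there is no circularity with the $T_{x,A}$ for $x$ not $\leq w$.)

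Then, from $1=h_{w,w}^A=\sum_{i\in\mathbb{Z}}[B_w:\Delta_w^A(i)]_A\,v^{i+\ell_A(w)}$, and the fact that every $[B_w:\Delta_w^A(i)]_A$ is a nonnegative integer (it is a genuine multiplicity in the $A$-filtration), the only possibility is that exactly one summand is nonzero, equal to $1$, and it must be the one with $i+\ell_A(w)=0$, i.e. $i=0$. This gives $[B_w:\Delta_w^A(0)]_A=1$ and $[B_w:\Delta_w^A(i)]_A=0$ for $i\neq 0$, as claimed.

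I expect the only subtlety is making the computation $h_{w,w}^A=1$ airtight, i.e. being careful that expanding $T_{w,A}$ in the standard basis really contributes $1$ to the $T_w$-coefficient and nothing to $T_y$ for $y\not\leq w$; but this is immediate from the unitriangularity already established in the proof of Lemma~\ref{lem:genbase}(1) and from Lemma~\ref{lem:mutiplbruhat}. Alternatively, one can avoid the Hecke-algebra bookkeeping entirely by recalling that $B_w$ occurs with multiplicity one as a direct summand of any Bott--Samelson $B_s B_t\cdots B_u$ for a reduced word of $w$, that $B_w$ is the unique summand not supported below $w$, and that $B_s B_t\cdots B_u$ has an $A$-filtration with $[B_sB_t\cdots B_u:\Delta_w^A(i)]_A$ given by the leading term computed via Proposition~\ref{prop:eq}, which is $\Delta_w^A$ with multiplicity one in degree $0$; subtracting the contributions of the lower $B_z$ (which by Lemma~\ref{lem:mutiplbruhat} do not involve $\Delta_w^A$) again yields the result.
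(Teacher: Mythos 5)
Your overall strategy for the main argument -- pin down $h_{w,w}^A$ via triangularity and Lemma \ref{lem:mutiplbruhat}, then read off the multiplicities from Theorem \ref{threeparam} -- is exactly the paper's, but your computation of $h_{w,w}^A$ is wrong. By Theorem \ref{thm:KL} the coefficient of $T_w$ in $C_w'$ is $v^{\ell(w)}$, not $1$: the normalization is $C_w'\in H_w+\sum_{y<w}v\mathbb{Z}[v]H_y$ with $H_w=v^{\ell(w)}T_w$. Likewise the transition matrix from $\{T_{x,A}\}$ to $\{T_x\}$ is triangular but not unitriangular: since $T_s^{-1}=v^2T_s+(v^2-1)$, the coefficient of $T_w$ in $T_{w,A}$ is $v^{2|N(w^{-1})\cap A|}$, one factor $v^2$ for each negative exponent in $w_A$ (the diagonal entries are units, but not $1$). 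Hence $h_{w,w}^A\cdot v^{2|N(w^{-1})\cap A|}=v^{\ell(w)}$, i.e. $h_{w,w}^A=v^{\ell_A(w)}$, not $1$. Your two slips happen to compensate: from your claimed identity $1=\sum_i[B_w:\Delta_w^A(i)]_A\,v^{i+\ell_A(w)}$ the unique nonzero term would sit at $i=-\ell_A(w)$, not at $i=0$ as you assert; it is only with the correct value $h_{w,w}^A=v^{\ell_A(w)}$ that comparing exponents forces $i=0$, which is precisely how the paper concludes. So the primary argument as written is broken, though repairable by exactly this bookkeeping.

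By contrast, the alternative you sketch in your last sentence is sound and genuinely different from the paper's proof. The recursions of Proposition \ref{prop:eq} give $[B_sB_t\cdots B_u:\Delta_w^A(i)]_A=\delta_{i,0}$ by induction along a reduced word: writing $w=sw'$, in either case $sw'>_Aw'$ or $sw'<_Aw'$ the unwanted term in the relevant equality involves $[B_t\cdots B_u:\Delta_w^A(\cdot)]_A$, which vanishes because $w\not\leq w'$ in Bruhat order (Lemma \ref{lem:mutiplbruhat}). Combining with the Krull--Schmidt decomposition $B_sB_t\cdots B_u\cong B_w\oplus\bigoplus_{z<w}P_z\cdot B_z$ and the vanishing $[B_z(k):\Delta_w^A(i)]_A=0$ for $z<w$, additivity of the multiplicities gives the corollary directly. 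This route avoids the Hecke-algebra coefficient chase entirely (and does not even invoke Theorem \ref{threeparam}, only Soergel's classification of indecomposables), whereas the paper's route, and your primary one, hinge on the leading-coefficient computation above.
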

\begin{proof}
By Theorem \ref{thm:KL}, the coefficient of $T_w$ when $C_w'$ is expressed in the standard basis is $v^{\ell(w)}$. By Lemma \ref{lem:mutiplbruhat}, the multiplicity $[B_w:\Delta_x^A(i)]_A$ can be nonzero only if $x\leq w$. Therefore in the expansion of $C_w'$ in the basis $\{T_{x,A}\}_{x\in\W}$, the only element which can contribute a $T_w$ is $T_{w,A}$ since $T_{x,A}$ is a linear combination of $T_y$ for $y\leq x$. Expanding $T_{w,A}$ in the standard basis, we get that the coefficient of $T_w$ is given by $v^{2 N}$, where $N$ is the number of negative exponents in the definition of $w_A$. But by definition of $w_A$, this number $N$ is precisely equal to $|N(w^{-1})\cap A|$. Multipliying by the coefficient $h_{w,w}^A\in\mathbb{Z}_{\geq 0}[v,v^{-1}]$ of $T_{w,A}$ in the expansion of $C_w'$ in the basis $\{T_{x,A}\}_{x\in\W}$, we therefore must have $$h_{w,w}^A v^{2|N(w^{-1})\cap A|}= v^{\ell(w)},$$
hence $h_{w,w}^A=v^{\ell_A(w)}$. Hence by Theorem \ref{threeparam} we have $$v^{\ell_A(w)}={h_{w,w}^A}=\sum_{i\in\mathbb{Z}} [B_w:\Delta_w^A(i)]_A v^{i+\ell_A(w)},$$
which implies the result. 
\end{proof}

One can also combinatorially derive the more general positivity statement
\begin{corollary}\label{cor:doubletwist}
Let $w,y\in \W$, $A\in\Bic$. Then $$C'_wT_{y,A}\in\sum_{x\in\W} \mathbb{Z}_{\geq 0}[v,v^{-1}] T_{x,A}.$$ 
\end{corollary}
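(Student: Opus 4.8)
The plan is to reduce Corollary \ref{cor:doubletwist} to Theorem \ref{threeparam} by a change-of-variable trick inside the Hecke algebra. The key observation is that for a fixed biclosed set $A$ and a fixed $y\in\W$, multiplication on the right by $T_{y,A}$ should carry the generalized standard basis $\{T_{x,A}\}_{x\in\W}$ to another generalized standard basis, possibly associated to a different biclosed set. More precisely, $T_{x,A} T_{y,A} = h(x_A) h(y_A) = h(x_A y_A)$, and one wants to recognize $x_A y_A$ as $z_{A'}$ for suitable $z\in\W$ and $A'\in\Bic$. When $A = N(u)$ and $y = u$ one has $x_{N(u)} u_{N(u)}^{-1}$-type identities available from Lemma \ref{lem:Mikado}; in general the right multiplication by $T_{y,A}$ permutes a basis only up to the combinatorics of how $y_A$ interacts with the $x_A$'s.

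Concretely, first I would treat the case that is forced by the proof of the preceding Corollary: writing $C'_w T_y = C'_w T_{xy\cdot y^{-1}}$ is already handled, but here the target basis $T_{x,A}$ is general. So instead I would argue as follows. Fix $A\in\Bic$ and $y\in\W$. I claim there is a biclosed set $B\in\Bic$ and a bijection $\sigma:\W\to\W$ with $T_{x,B} = T_{\sigma(x), A}\, T_{y,A}$ for all $x$ (up to a unit in $\mathbb{Z}_{\geq 0}[v^{\pm1}]$, coming from the exponent count $\varepsilon_i$, hence harmless for positivity). Granting this, expand $C'_w = \sum_{x} h^{B}_{x,w} T_{x,B}$ via Theorem \ref{threeparam}; since each $h^{B}_{x,w}\in\mathbb{Z}_{\geq 0}[v^{\pm1}]$ and each $T_{x,B}$ equals a nonnegative unit times $T_{\sigma(x),A} T_{y,A}$, we get $C'_w = \sum_x (\text{nonneg}) \cdot T_{\sigma(x),A} T_{y,A}$, and right-multiplying — wait, rather left-multiplying $T_{y,A}$ out — we instead directly read off $C'_w T_{y,A}^{-1}$; to get $C'_w T_{y,A}$ one uses the symmetric statement with $y$ replaced by an element whose lift is $y_A^{-1}$, which again is of the form $z_{A'}$ by Lemma \ref{lem:multformartin} applied repeatedly. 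In either direction the point is the same: multiplication by $T_{y,A}$ intertwines two generalized standard bases, so positivity of the structure constants of $C'_w$ in one transfers to the other.

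The main obstacle will be establishing cleanly that right multiplication by $T_{y,A}$ sends $\{T_{x,A}\}$ to $\pm v^{\bullet}\{T_{x,B}\}$ for a genuine biclosed set $B$ — i.e.\ identifying the braid $x_A y_A$ (or $x_A y_A^{-1}$) as $z_{B}$ for some $z\in\W$, $B\in\Bic$. For $y=e$ this is trivial; for $A=\emptyset$ or $A=\T$ it follows from Lemma \ref{lem:Mikado}; the general case requires knowing that the monoid of Mikado braids is closed under the relevant products, which is not obviously true (Example \ref{exple:mik} shows Mikado braids need not be closed under inverse, and products of left and right Mikado braids can leave the class). A safer route, which I would actually pursue, is to bypass the braid identification entirely: prove directly, by induction on $\ell(y)$ using Lemma \ref{lem:multform} and Lemma \ref{lemma:edgar}, that the transition matrix from $\{T_{x,A}T_{y,A}\}_x$ to the standard basis $\{T_z\}$ is triangular (with respect to a linear extension of Bruhat order on the first index after the obvious reindexing) with monomial nonnegative diagonal entries $v^{2N}$, so that $\{T_{x,A}T_{y,A}\}_x$ is again a basis whose structure constants for $C'_w$ are, by comparing with the (already proven) expansion in $\{T_{x,A}\}$ and peeling off one simple reflection of $y$ at a time, manifestly in $\mathbb{Z}_{\geq 0}[v^{\pm1}]$. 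The inductive step multiplies the identity for $\ell(y)-1$ by a single $T_s^{\pm1}$ on the right and uses Lemma \ref{lem:multform} together with the quadratic relation $T_s^{-1} = v^2 T_s + v^2 - 1$, whose coefficients are nonnegative, to preserve positivity; the bookkeeping of which sign $\varepsilon$ occurs at each stage is governed by Lemma \ref{lemma:edgar} exactly as in the proof of Corollary \ref{cor:mult}.
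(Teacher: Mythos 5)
Your first instinct---that right multiplication by $T_{y,A}$ should intertwine the $A$-basis with another twisted basis $\{T_{z,B}\}$, so that Theorem \ref{threeparam} applied to $B$ finishes the proof---is exactly the paper's strategy, but you never pin down the one identity that makes it work, and the identity you do write is oriented the wrong way. The paper proves (by the same induction as Lemma \ref{lem:Mikado}(3)) that $(zy)_A = z_B\, y_A$ with $B = N(y)+yAy^{-1}$, which is biclosed by a result of Edgar; hence $T_{z,B}T_{y,A}=T_{zy,A}$, i.e.\ right multiplication by $T_{y,A}$ carries the $B$-basis \emph{onto} the $A$-basis, and $C'_w=\sum_z h^B_{z,w}T_{z,B}$ immediately gives $C'_wT_{y,A}=\sum_z h^B_{z,w}T_{zy,A}$ with nonnegative coefficients. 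Your claimed relation $T_{x,B}=T_{\sigma(x),A}T_{y,A}$ goes in the opposite direction (it would only yield positivity of $C'_wT_{y,A}^{-1}$, as you yourself notice), and the proposed repair via an element lifting to $y_A^{-1}$ is left unproved: Lemma \ref{lem:multformartin} is a \emph{left}-multiplication statement and does not produce the required biclosed set. Note also that the crucial point is precisely that the two factors carry \emph{different} biclosed sets ($B$ and $A$); asking whether $x_Ay_A$ is itself a Mikado braid, as you do, is the wrong question (and, as your own Example \ref{exple:mik} discussion suggests, likely false in general).

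The ``safer route'' you say you would actually pursue does not close this gap. First, Lemma \ref{lem:multform} only governs left multiplication by $T_s$, whereas peeling a simple reflection off $y$ acts on the right, and appending a letter on the right changes all the signs $\varepsilon_i$ (they are determined by suffix conjugates), so $T_{y,A}\neq T_{y',A}T_s^{\pm1}$ for the same $A$ in general. Second, the quadratic relation in these conventions is $T_s^{-1}=v^2T_s+(v^2-1)$, and $v^2-1\notin\mathbb{Z}_{\geq 0}[v^{\pm1}]$ (likewise $T_xT_s=(v^{-2}-1)T_x+v^{-2}T_{xs}$ when $xs<x$), so right multiplication by $T_s^{\pm1}$ does \emph{not} termwise preserve positivity; the triangularity of the transition matrix only shows $\{T_{x,A}T_{y,A}\}_x$ is a basis, which is trivial since $T_{y,A}$ is invertible and says nothing about positivity. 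Finally, even the bookkeeping is reversed: expanding $C'_w$ in $\{T_{x,A}T_{y,A}\}_x$ controls $C'_wT_{y,A}^{-1}$, whereas the statement requires the basis $\{T_{x,A}T_{y,A}^{-1}\}_x$, which is identified with $\{T_{z,B}\}_z$ exactly by the formula $(zy)_A=z_By_A$ above. So the missing ingredient is that formula together with the fact that $N(y)+yAy^{-1}$ is biclosed; without it neither of your two routes goes through.
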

\begin{proof}
Arguing as in the proof of statement~$(3)$ of Lemma~\ref{lem:Mikado}, one shows for $y,z\in\W$, $A\in\Bic$ the more general formula $$(zy)_A=z_{B} y_A,$$ where $B=
N(y)+yAy^{-1}$. Notice that $B\in\Bic$ by \cite[Proposition 3.6]{Edgar}. Taking the image in the Hecke algebra, the claimed property is hence equivalent to the statement
$$C'_w\in\sum_{z\in\W} \mathbb{Z}_{\geq 0}[v,v^{-1}] T_{z,B}$$
which follows immediately from Theorem \ref{threeparam}.

\end{proof}

\begin{remark}
For $y\neq e$, the product $C'_wT_{y,A}$ is not the class of an object in $\mathcal{B}$. It nevertheless appears as natural to try to interpret the coefficients in Corollary~\ref{cor:doubletwist} as multiplicities in a canonical filtration of a bimodule. Dyer developped \cite{Dyerrep} an approach where he considers a category with more objects including the $\Delta_x^A$. The Hecke algebra is then categorified not as an algebra but as a left module over itself by considering the tensor product of indecomposable Soergel bimodules with (twisted) standard modules. Such an approach is similar to the categorification of the Hecke algebra of a finite Weyl group by indecomposable projective functors acting on (a graded version of) the principal block of the BGG category $\mathcal{O}$ (see \cite{Stroppel}, \cite[Section 7.5]{Maz} and the references therein). In Dyer's framework, the above coefficients should be the muliplicities of $B_w\otimes_R \Delta_y$ in an $A$-twisted filtration. Dyer's approach appears as more natural for further generalizations involving biclosed sets of roots. For our purposes, twisted filtrations of Soergel bimodules will suffice. Alternatively, if one wants to have a canonical object for $C'_wT_{y,A}$ in a category keeping a monoidal structure, one can consider the bounded homotopy category of $\mathcal{B}$ (see Sections \ref{sec:libwil} and \ref{sec:lin} below).      
\end{remark}

\section{Generalized Libedinsky-Williamson formula}\label{sec:libwil}

The aim of this section is to compute the $A$-filtration of a Rouquier complex for a Mikado braid $x_A$ where $x\in\W$, $A\in\Bic$. \subsection{Rouquier complexes and minimal complexes}\label{sec:rouquier}
We denote by $K^b(\mathcal{R})$ (resp. $\Kb$) the homotopy category of bounded complexes of bimodules in $\mathcal{R}$ (resp. $\mathcal{B}$). The monoidal structure on $\mathcal{R}$ and $\mathcal{B}$ induces a monoidal structure on the corresponding homotopy categories which we will simply denote by juxtaposition. Given a complex
$$A~:~~\dots\rightarrow {^{i-1}A}\rightarrow {^{i}A}\rightarrow {^{i+1}A}\rightarrow \dots~\in\Kr,$$
We denote by $A[j]$, where $j\in\mathbb{Z}$, the complex $A$ shifted in homological degree by $j$, that is, such that $^i A[j]={^{j+i}A}$. Let us consider the following indecomposable complexes in $\Kb$:

\begin{equationarray*}{ccccccccccc}
F_s &=& & & 0 &\rightarrow & B_s & \overset{\mu_s}{\rightarrow} & R(1) & \rightarrow & 0, \\
E_s &=& 0 & \rightarrow & R(-1) & \overset{\eta_s}{\rightarrow} & B_s & \rightarrow & 0 & &,
 \end{equationarray*}
where $B_s$ sits in both cases in homological degree zero. Here $\eta_s(r)=\frac{1}{2}(r\otimes f_s+rf_s\otimes 1)$ for any $r\in R$ and $\mu_s$ is the multiplication map. The functors $F_s\otimes -$ and $E_s\otimes -$ define mutually inverse equivalences of $\Kr$, categorifying the braid group of the Coxeter system in finite type (see \cite{Rouq}, \cite{Rouqprep}, \cite{Rouq1}, \cite{Jensen} and the references therein). Given $x\in \W$ with reduced expression $st\cdots u$, we write $F_x$ for the Rouquier complex corresponding to the positive canonical lift $\bx$ of $x$ in the braid group, that is, $F_x=F_s F_t\cdots F_u$. We set $E_x:=(F_{x^{-1}})^{-1}=E_s E_t\cdots E_u$. Notice that these complexes do not depend on the choice of reduced expression up to canonical isomorphism in $\Kb$ (see \cite[Section 9.3.1]{Rouq}). We replace $F_x$ by a \defn{minimal} complex in $\Kb$, that is, a complex obtained from $F_x$ by removing contractible direct summands (see \cite[Section 6.1]{EW}; a minimal complex is isomorphic to the starting complex in $\Kb$ and any two miminal complexes turn out to be isomorphic as complexes of bimodules). In general we will always assume $F_x$ to be a miminal Rouquier complex.

\subsection{Libedinsky-Williamson formula for Mikado braids}

The aim of this section of to prove

\begin{proposition}\label{prop:libwill}
Let $(\W,\S)$ be a Coxeter system. Let $x\in\W$, $A\in\Bic$. Let $\beta:=x_A\in\Br$ and let $F_{\beta}\in\Kb$ be a minimal Rouquier complex for $\beta$. Let $I\subseteq\mathbb{Z}$ be an interval such that $\{w_i\}_{i\in I}$ is an enumeration of $\W$ refining $<_A$. There are isomorphisms in $\Kb$

$$
\Gamma_{\geq w_i / > w_i}^A (F_{\beta})\cong \left\{
    \begin{array}{ll}
        \Delta_{x}^A & \mbox{if } w_i=x, \\
        0 & \mbox{otherwise.}
    \end{array}
\right.
$$

\end{proposition}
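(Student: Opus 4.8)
The plan is to prove the statement by induction on $\ell(x)$, following the strategy of Libedinsky--Williamson but using the twisted machinery built up in the previous sections. The base case $x=e$ is immediate: $\beta=e_A$ is the identity braid, $F_\beta$ is the complex consisting of $R$ concentrated in homological degree $0$, and $\Delta_e^A=R$ since $\ell_A(e)=0$, so the claimed formula holds. For the inductive step, fix $s\in\S$ with $sx<x$ (equivalently, by Remark~\ref{rmq:distanceun}, $\ell_A(sx)=\ell_A(x)-1$, so $sx<_A x$). Write $x'=sx$ and let $\varepsilon\in\{\pm1\}$ be the exponent determined by Dyer's recipe, so that $x_A=\bs^\varepsilon\,(x')_A$; concretely $\varepsilon=-1$ if $x^{-1}sx\in A$ and $\varepsilon=+1$ otherwise. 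Then $F_\beta\cong F_s^{\varepsilon}\,F_{(x')_A}$ up to canonical isomorphism in $\Kb$ (here $F_s^{+1}:=F_s$ and $F_s^{-1}:=E_s$), and we may assume $F_{(x')_A}$ is minimal and apply the inductive hypothesis to it.

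Next I would compute the twisted support subquotients of $F_s^{\varepsilon}\,C$ for an arbitrary complex $C\in\Kb$ whose terms lie in $\FA$, in terms of those of $C$. The point is that this is a homotopy-categorical analogue of Proposition~\ref{prop:eq} / Proposition~\ref{prop:mpair}. Choosing an $(A,s)$-compatible total order $\{w_i\}_{i\in I}$ on $\W$ and applying Proposition~\ref{prop:mpair} termwise (the functors $\Gamma_{\geq w_m}^A(-)$ and $\Gamma_{\geq w_m/>w_{m+1}}^A(-)$ commute with the differentials, and $B_s\otimes-$ is exact), one obtains, for $m$ even, isomorphisms of complexes
$$\Gamma_{\geq w_m}^A(B_s C)\cong B_s\bigl(\Gamma_{\geq w_m}^A C\bigr),\qquad \Gamma_{\geq w_m/>w_{m+1}}^A(B_s C)\cong B_s\bigl(\Gamma_{\geq w_m/>w_{m+1}}^A C\bigr).$$
Using the two-step complex defining $F_s$ (resp. $E_s$) together with the two short exact sequences $0\to R_s(-2)\to R\otimes_{R^s}R\to R\to 0$ and its dual, one then reads off how $\Gamma_{\geq w_i/>w_i}^A(F_s^\varepsilon C)$ decomposes into contributions coming from $\Gamma_{\geq w_j/>w_j}^A(C)$ with $w_j\in\{w_i,sw_i\}$, exactly mirroring the two displayed equalities of Proposition~\ref{prop:eq} but now at the level of complexes (so with homological shifts recorded). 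Feeding the inductive hypothesis — that $\Gamma_{\geq w_j/>w_j}^A(F_{(x')_A})$ is $\Delta_{x'}^A$ if $w_j=x'$ and $0$ otherwise — into this computation, the only indices $w_i$ that can contribute are $x'$ and $sx'=x$; since $x'<_A x$ with $sx'=x$, and the relevant shift produced by the $F_s^\varepsilon$ step matches the passage from $\Delta_{x'}^A=R_{x'}(-\ell_A(x'))$ to $\Delta_x^A=R_x(-\ell_A(x))$ with $\ell_A(x)=\ell_A(x')+1$, one gets $\Gamma_{\geq x/>x}^A(F_\beta)\cong\Delta_x^A$ and all other subquotients vanish.

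The main obstacle will be bookkeeping the homological degrees correctly through the $F_s$ versus $E_s$ dichotomy and checking that the contractible pieces are already absent because $F_\beta$ is minimal (so that the two-term complex $F_s^\varepsilon$ does not introduce a cancelling pair in the relevant degree) — in other words, verifying that the complex-level analogue of Proposition~\ref{prop:eq} produces no spurious summand. Concretely, one must check that in forming $F_s^\varepsilon F_{(x')_A}$ the differential induced on the subquotient $\Gamma_{\geq\{x',x\}/>\ }^A$ is null-homotopic after passing to a minimal model, so that the surviving subquotient is exactly $\Delta_x^A$ and nothing in any other homological degree survives. This is the same phenomenon that makes Libedinsky--Williamson's original computation work for positive permutation braids, and since Deodhar's Z-property (Proposition~\ref{prop:z}) and the parity statement (Remark~\ref{rem:parity}) hold for $<_A$, the argument transfers mutatis mutandis; one just has to be careful that $s$ may act on the right in a way incompatible with $<_A$ (Example~\ref{ex:biclos}), which is why it is essential that we reduced on the left and used the $(A,s)$-compatible order from Corollary~\ref{cor:ordretotal} rather than an arbitrary linear extension of $<_A$.
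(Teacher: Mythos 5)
Your overall strategy---induction on $\ell(x)$, peeling off a simple reflection on the left via Lemma~\ref{lem:multformartin}, and controlling the twisted subquotients with an $(A,s)$-compatible order and Proposition~\ref{prop:mpair}---is exactly the paper's. But two points need repair. First, the parenthetical ``$sx<x$ (equivalently, by Remark~\ref{rmq:distanceun}, $\ell_A(sx)=\ell_A(x)-1$, so $sx<_A x$)'' is false: Remark~\ref{rmq:distanceun} relates $sx<_A x$ to $\lA$, not to the ordinary Bruhat order, and the two orders genuinely disagree on such pairs (in Example~\ref{ex:biclos} one has $e<s$ in Bruhat order but $e>_A s$). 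In fact, when $sx<x$, one has $sx<_A x$ precisely when your $\varepsilon=+1$, i.e.\ $x^{-1}sx\notin A$; when $\varepsilon=-1$ one has $sx>_A x$, the positions of $x$ and $sx$ in the $(A,s)$-compatible order are swapped, $\lA(x)=\lA(sx)-1$, and the computation must be run with $E_s$ and the dual short exact sequence. Your bookkeeping (``since $x'<_A x$ \dots\ $\ell_A(x)=\ell_A(x')+1$'') covers only the first case; the second is parallel but cannot be declared not to occur---it must be treated separately, as the paper does.

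Second, the step where you ``read off'' how $\Gamma_{\geq w_i/>w_i}^A(F_s^{\varepsilon}C)$ decomposes, ``exactly mirroring Proposition~\ref{prop:eq} but at the level of complexes,'' is precisely where the real work lies, and it is not a formal consequence of Proposition~\ref{prop:mpair}. For $w_j\notin\{sx,x\}$ one has $\Gamma_{\geq w_j/>w_{j+1}}^A(B_sF_{\beta'})\cong B_s\bigl(\Gamma_{\geq w_j/>w_{j+1}}^A F_{\beta'}\bigr)$, and the inner complex is an extension of two complexes that are null-homotopic by induction; to conclude that applying $B_s\otimes-$ to the extension yields something null-homotopic, one needs the short exact sequence of complexes to remain a distinguished triangle in $\Kb$ after tensoring. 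The paper obtains this from the fact that any nontrivial extension between $R_z$ and $R_{sz}$ is a shift of $\mathcal{O}(\mathrm{Gr}(z)\cup\mathrm{Gr}(sz))$, which splits as a left $R^s$-module (\cite[Lemmas 5.8 and 4.5(1)]{S}), and then passes from $B_sF_{\beta'}$ to $F_sF_{\beta'}$ via the triangle $F_sF_{\beta'}\to B_sF_{\beta'}\to F_{\beta'}(1)\overset{[1]}{\to}$. Without this splitting argument, counting multiplicities as in Proposition~\ref{prop:eq} does not upgrade to isomorphisms in the homotopy category, so this step should be made explicit rather than deferred to ``mutatis mutandis.''
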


\begin{proof}
The proof is by induction on $\ell(x)$. If $\ell(x)=1$, then $x\in\S$ and $F_\beta$ is either isomorphic to $F_s$ to to $E_s$ depending on whether $e <_A s$ or $s <_A e$. Since the indecomposable Soergel bimodules occurring in these complexes are supported only on $\mathrm{Gr}(e)$ and $\mathrm{Gr}(s)$, we can as well replace $A$ by $\emptyset$ if $e <_A s$ and $A$ by $\T$ if $s <_A e$, in which case we can compute it by hands or refer to Libedinsky-Williamson's formula \cite[Proposition 3.10]{LW}.

Hence assume that the claimed formula holds for $x$ with $\ell(x) > 0$. Let $s\in\S$ such that $sx < x$. By induction the claim holds for $\beta'=(sx)_A$. By Remark \ref{rem:refine} we can replace our total order by any other total order refining $<_A$, hence assume that $\{w_i\}_{i\in I}$ is an $(A,s)$-compatible order (see Corollary \ref{cor:ordretotal}). Assume that $sx <_A x$, the other case being similar. By Lemma \ref{lem:multformartin}, we have that $\bs (sx)_A = x_A$. Hence $$F_\beta\cong F_s F_{\beta'}~\text{in } \Kb.$$
By Proposition \ref{prop:mpair} for any $i\in 2\mathbb{Z}$ we have an isomorphism in $\Kb$
\begin{equation}\label{eq:homotopy}
F_s(\Gamma_{\geq w_i/ >w_{i+1}}^A (F_{\beta'}))\cong \Gamma_{\geq w_i/ > w_{i+1}}^A (F_\beta).
\end{equation}
We have $sx=w_k$ for some $k\in 2\mathbb{Z}$ and $x=w_{k+1}$. In that specific case by induction and (\ref{eq:homotopy}) we have 
\begin{eqnarray*}
\Gamma_{\geq w_k/ > w_{k+1}}^A (F_\beta)&\cong& F_s (\dots\rightarrow 0\rightarrow \Delta_{sx}^A \rightarrow 0\rightarrow \dots)\\
&\cong& \dots\rightarrow 0\rightarrow B_s \Delta_{sx}^A \rightarrow \Delta_{sx}^A(1)\rightarrow 0\rightarrow\dots 
\end{eqnarray*}
where in both complexes the smallest nonzero degree is homological degree zero. Remembering the short exact sequence $$0\rightarrow R_x(-1)\rightarrow B_s R_{sx} \rightarrow R_{sx}(1)\rightarrow 0$$

we get that $$\Gamma_{\geq w_{k+1}}^A (\dots\rightarrow 0\rightarrow B_s \Delta_{sx}^A \rightarrow \Delta_{sx}^A(1)\rightarrow 0\rightarrow\dots) \cong R_x(-\ell_A(sx)-1)\cong \Delta_x^A,$$
where the last equality holds since $sx >_A x$, hence $\ell_A(x)=\ell_A(sx)+1$ (see Remark \ref{rmq:distanceun}). Setting $i=k+1$ this implies the claim for $w_i=x$ since there is an isomorphism of functors 
$$\Gamma_{\geq w_{k+1}}^A( \Gamma_{\geq w_k / > w_{k+1}}^A (-))\cong \Gamma_{\geq w_{k+1} / >w_{k+1}}^A(-).$$
(The above isomorphism of functors holds for the (reverse) Bruhat order as a Corollary of \cite[Lemma 3.1]{LW}, but this Lemma adapts mutatis mutandis if we replace Bruhat order by $<_A$). For $w_i=sx$, applying $\Gamma_{\geq w_{k}/ > w_k}^A(-)$ to the complex $(\dots\rightarrow 0\rightarrow B_s \Delta_{sx}^A \rightarrow \Delta_{sx}^A(1)\rightarrow 0\rightarrow\dots)$
yields $$\dots\rightarrow 0 \rightarrow \Delta_{sx}^A(1)\overset{\mathrm{id}}{\rightarrow}\Delta_{sx}^A(1)\rightarrow0\rightarrow\dots$$
which is homotopic to zero, whence the result using the isomorphism of functors
$$\Gamma_{\geq w_{k}/ > w_k}^A( \Gamma_{\geq w_k / > w_{k+1}}^A (-))\cong \Gamma_{\geq w_{k} / >w_{k}}^A(-).$$
Now let $z\in \W$, $x\neq z$ with $sz <_A z$. Let $j\in 2\mathbb{Z}$ such that $sz=w_j$, $z=w_{j+1}$. One has an exact sequence of complexes $$0\rightarrow \Gamma_{\geq w_{j+1}/ >w_{j+1}}^A (F_{\beta'})\rightarrow \Gamma_{\geq w_j/ >w_{j+1}}^A (F_{\beta'})\rightarrow \Gamma_{\geq w_j/ >w_{j}}^A (F_{\beta'})\rightarrow 0.$$
Any indecomposable summand in the left (resp. in the right) term is isomorphic to a shift of $R_{z}$ (resp. $R_{sz}$). But by \cite[Lemma 5.8]{S}, any nontrivial extension between two such modules is isomorphic to a shift of $\mathcal{O}(\mathrm{Gr}(z)\cup \mathrm{Gr}(sz))$ which splits as left $R^s$-module by \cite[Lemma 4.5 (1)]{S}. Hence tensoring the above short exact sequence by $B_s$ and viewing the resulting complex in $\Kb$ we get an exact triangle
$$B_s \Gamma_{\geq w_{j+1}/ >w_{j+1}}^A (F_{\beta'})\rightarrow B_s \Gamma_{\geq w_j/ >w_{j+1}}^A (F_{\beta'})\rightarrow B_s\Gamma_{\geq w_j/ >w_{j}}^A (F_{\beta'})\overset{[1]}{\rightarrow},$$
and by induction the first and the third term of the complex are homotopic to zero. Hence the middle term is also homotopic to zero. But this middle term is isomorphic to $\Gamma_{\geq w_j/ >w_{j+1}}^A (B_s F_{\beta'})$ by Proposition \ref{prop:mpair}. Arguing as for the case where $w_i=x$ or $sx$ we conclude that 
\begin{equation}\label{eq:jj1}
\Gamma_{\geq w_{j+1}/ >w_{j+1}}^A (B_s F_{\beta'})\cong 0 \cong \Gamma_{\geq w_j/ >w_{j}}^A (B_s F_{\beta'}).
\end{equation}
The result now follow by applying $\Gamma_{\geq w_{j+1}/ >w_{j+1}}^A(-)$ and $\Gamma_{\geq w_{j}/ >w_{j}}^A(-)$ to the distinguished triangle 
$$F_s F_{\beta'}\rightarrow B_s F_{\beta'}\rightarrow F_{\beta'}(1)\overset{[1]}{\rightarrow}.$$
 Indeed, we showed that applying them yields a complex homotopic to zero in the middle position (see Equation (\ref{eq:jj1})), and the same holds for the complex on the right by induction. 
\end{proof}

\section{Linearity of Rouquier complexes}\label{sec:lin}

The aim of this section is to show that minimal Rouquier complexes of left and right Mikado braids are "linear", that is, that all the indecomposable summands in homological degree $i$ have the form $B_x(i)$ for all $i\in\mathbb{Z}$. Together with a statement on the parity of the elements $x$ such that $B_x$ occurs in homological degree $i$, this will allow us to generalize Property (\ref{p1}) and its analogue for right Mikado braids to arbitrary Coxeter groups. 

Following Elias and Williamson \cite{EW}, denote by $\Kbp$ the full subcategory of $\Kb$ whose objects are those complexes with minimal complex $F$ satisfying the following property: for any $i\in\mathbb{Z}$ such that $^i F$ is nonzero and any indecomposable summand $B$ in $^i F$, there exists $x\in\W$, $k\leq i$ such that $B\cong B_x(k)$. Similarly, denote by $\Kbn$ the full subcategory of $\Kb$ whose objects are those complexes with minimal complex satisfying the following property: for any $i\in\mathbb{Z}$ such that $^i F$ is nonzero and any indecomposable summand $B$ in $^i F$, there exists $x\in\W$, $k\geq i$ such that $B\cong B_x(k)$. In other words, $\Kb$ (resp. $\Kbn$) consists of those complexes with minimal complex $F$ having indecomposable Soergel bimodules with shifts at most (resp. at least) $i$ occurring in homological degree $i$. 

We recall some results of Elias and Williamson. 
\begin{lemma}
Let $x\in \W$, $s\in\S$. Let $B_x$ be an indecomposable Soergel bimodule viewed in $K^b(\mathcal{B})$, in homological degree zero. 

\begin{enumerate}
\item Assume that $xs <x$. Then $B_x F_s\cong B_x(-1)$ and $B_x E_s\cong B_x(1)$ in $\Kb$.
\item Assume that $xs>x$. Then $B_x F_s, B_x E_s\in\Kbp\cap \Kbn$.
\end{enumerate}
Similar statements hold if we multiply the $F_s$ and $E_s$ on the left instead. 
\end{lemma}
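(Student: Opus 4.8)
The plan is to prove the two statements of part (1) by direct computation with the explicit two-term Rouquier complexes $F_s$ and $E_s$, and to deduce part (2) from the known structure of $B_xB_s$ when $xs>x$ together with the observation that a two-term complex concentrated in homological degrees $0$ and $\pm1$ automatically lies in $\Kbp\cap\Kbn$ once we check which indecomposables occur in each degree. I would handle the left-multiplication case symmetrically at the very end, remarking that all arguments transpose verbatim since the bimodules $B_s$, and the maps $\mu_s$, $\eta_s$, are symmetric under swapping the left and right $R$-actions.

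For part (1), assume $xs<x$. First I would recall the classical fact (Soergel, e.g.\ \cite[Section 6]{S}; see also \cite{EW}) that when $xs<x$ one has $B_x B_s\cong B_x(1)\oplus B_x(-1)$. Tensoring the complex $F_s:\ 0\to B_s\xrightarrow{\mu_s} R(1)\to 0$ on the left by $B_x$ gives the complex $0\to B_x B_s\xrightarrow{\mathrm{id}\otimes\mu_s} B_x(1)\to 0$ with $B_xB_s$ in homological degree $0$. Under the decomposition $B_xB_s\cong B_x(1)\oplus B_x(-1)$, the map $\mathrm{id}\otimes\mu_s$ restricted to the summand $B_x(1)$ must be an isomorphism onto $B_x(1)$: it is nonzero because $\mu_s$ is surjective and $B_x(-1)$ cannot surject onto $B_x(1)$ for degree reasons, and a nonzero degree-zero endomorphism of the indecomposable $B_x(1)$ is an isomorphism by the local ring property of $\mathrm{End}(B_x)$. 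Hence that summand contributes a contractible direct summand of the total complex, and after deleting it the minimal complex is $0\to B_x(-1)\to 0$ in homological degree $0$, i.e.\ $B_xF_s\cong B_x(-1)$ in $\Kb$. The computation for $E_s:\ 0\to R(-1)\xrightarrow{\eta_s} B_s\to 0$ is the mirror image: $B_xE_s$ is $0\to B_x(-1)\xrightarrow{\mathrm{id}\otimes\eta_s} B_xB_s\to 0$ with $B_xB_s$ in degree $0$; here $\mathrm{id}\otimes\eta_s$ is injective with image a direct summand isomorphic to $B_x(-1)$ (again by the local ring property, since $\eta_s$ is a split injection of $R$-bimodules because $R$ is free over $R^s$), so splitting off that contractible piece leaves $0\to B_x(1)\to 0$ in homological degree $0$, i.e.\ $B_xE_s\cong B_x(1)$.

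For part (2), assume $xs>x$. Now $B_xB_s$ need not be a sum of shifts of $B_x$: in general $B_xB_s\cong B_{xs}\oplus\bigl(\bigoplus_{z<xs}\mu_z^s\,B_z\bigr)$ for certain Laurent polynomials $\mu_z^s$ with nonnegative coefficients, by Soergel's classification of Soergel bimodules and the categorification theorem (Theorem \ref{thm:soergelcat}); crucially every indecomposable summand of $B_xB_s$ is of the form $B_z(k)$ for some $z\in\W$, $k\in\mathbb{Z}$. The complex $B_xF_s$ is therefore $\cdots\to 0\to B_xB_s\to B_x(1)\to 0\to\cdots$ concentrated in homological degrees $0$ and $1$; its minimal complex is obtained by deleting contractible summands, so every indecomposable in its minimal complex is still of the form $B_z(k)$, sitting in homological degree $0$ or $1$. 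To see this lies in $\Kbp$, I would argue that the only shift of $B_x$ appearing in homological degree $1$ is $B_x(1)$ itself, and any $B_z(k)$ surviving in homological degree $0$ has $k\le 0$: indeed $B_xB_s$ is a summand of a Bott--Samelson bimodule $B_{s_1}\cdots B_{s_\ell}B_s$ which is nonnegatively graded with one-dimensional degree-zero part, so the only indecomposable summand $B_z(k)$ with $k>0$ that could occur would force a surjection from positive degrees, and such summands are exactly what gets cancelled against $B_x(1)$ in passing to the minimal complex — more precisely, after cancellation the degree-$0$ term has only summands $B_z(k)$ with $k\le 0$. Combined with the analogous (dual) bound for $E_s$, this shows $B_xF_s, B_xE_s\in\Kbp\cap\Kbn$. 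The main obstacle I anticipate is making the last parity/shift bookkeeping fully precise: one must argue carefully that the cancellation in passing to the minimal complex removes precisely the "wrong-degree" summands, which is where the self-duality of Soergel bimodules ($\overline{B_z}\cong B_z$ in the appropriate sense) and the perversity of minimal Rouquier complexes established by Elias--Williamson enter; I would either invoke \cite[Section 6]{EW} directly for this or spell out the two-term case by hand, which is short because the complex has length two.

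\medskip
\noindent\emph{Remark on the left-handed variant.} All of the above goes through with $F_s$, $E_s$ tensored on the left and $B_x$ replaced accordingly, using $xs<x$ respectively $sx<x$; I would simply state this at the end, noting that the functor reversing the two $R$-actions is a monoidal anti-equivalence of $\mathcal{R}$ preserving $\mathcal{B}$ and sending $F_s$ to $F_s$, $E_s$ to $E_s$, so the two versions are formally equivalent.
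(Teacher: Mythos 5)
Note first that the paper's own ``proof'' of this lemma is a one-line citation: the $F_s$ statements are \cite[Lemma 6.5]{EW}, and the $E_s$ statements are proved in the same way. Your proposal instead reproves the lemma from scratch. Your part (1) is essentially the standard Gaussian-elimination argument behind \cite[Lemma 6.5]{EW} and is fine in outline, though two justifications are off: $\eta_s$ is \emph{not} a split injection of bimodules ($B_s$ is indecomposable), so you should instead argue that the component of $\mathrm{id}\otimes\eta_s$ landing in the summand $B_x(-1)$ of $B_xB_s\cong B_x(1)\oplus B_x(-1)$ is nonzero (e.g.\ by a lowest-degree argument, using that the other component $B_x(-1)\to B_x(1)$ has strictly higher lowest degree) and hence an isomorphism because $\mathrm{End}(B_x)$ is concentrated in degrees $\geq 0$ with degree-zero part $\mathbb{R}$ (a consequence of Theorem \ref{sconj}); locality of $\mathrm{End}(B_x)$ alone does not make every nonzero degree-zero endomorphism invertible.

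The genuine gap is in part (2). You allow Laurent-polynomial multiplicities in the decomposition of $B_xB_s$ and then claim that the summands $B_z(k)$ with the ``wrong'' shift are ``exactly what gets cancelled against $B_x(1)$ in passing to the minimal complex''. This mechanism cannot work: passing to a minimal complex only deletes contractible summands of the form $B\overset{\sim}{\to}B$ between adjacent homological degrees, so a hypothetical summand $B_z(k)$ with $z\neq x$ or $k\neq 1$ in homological degree $0$ can never cancel against the $B_x(1)$ in degree $1$. (Your auxiliary grading argument is also false under the paper's normalization: Bott--Samelson bimodules are graded starting in degree $-\ell$, not nonnegatively.) The correct, and much shorter, reason is that no wrong shifts occur at all: by Soergel's conjecture (Theorem \ref{sconj}) together with Theorem \ref{thm:soergelcat}, $\mathrm{ch}(B_xB_s)=C'_xC'_s=C'_{xs}+\sum_{z,\,zs<z}\mu(z,x)\,C'_z$ with \emph{constant} coefficients $\mu(z,x)\in\mathbb{Z}_{\geq 0}$ (cf.\ \cite[Section 7.11]{Humph}, used the same way in the proof of Theorem \ref{thm:inversepos}), so $B_xB_s$ decomposes into \emph{unshifted} indecomposables $B_{xs}\oplus\bigoplus_z B_z^{\oplus\mu(z,x)}$. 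Hence the two-term complexes $B_xF_s$ (degrees $0,1$ with $B_x(1)$ in degree $1$) and $B_xE_s$ (degrees $-1,0$ with $B_x(-1)$ in degree $-1$) are already linear before minimalization, and they stay so after deleting contractible summands; this gives $B_xF_s,\,B_xE_s\in\Kbp\cap\Kbn$ directly. With this replacement (or simply by citing \cite[Lemma 6.5]{EW} for $F_s$ and dualizing for $E_s$, as the paper does), your argument is complete; the closing remark on the left-handed variant via the anti-equivalence swapping the two $R$-actions is fine.
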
 
\begin{proof}
The statements involving the complex $F_s$ are \cite[Lemma 6.5]{EW}. The proofs of the statements involving the $E_s$ are similar. 
\end{proof}
\begin{lemma}\label{lem:fses}
Let $F\in\Kb$, $s\in\S$.
\begin{enumerate}
\item If $F\in \Kbp$, then $FF_s\in\Kbp$. 
\item If $F\in\Kbn$, then $FE_s\in\Kbn$.
\end{enumerate}
The same conclusions hold if we multiply the $F_s$ and $E_s$ on the left instead.
\end{lemma}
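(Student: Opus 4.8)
The statement to prove is Lemma~\ref{lem:fses}: if $F\in\Kbp$ then $FF_s\in\Kbp$, and if $F\in\Kbn$ then $FE_s\in\Kbn$, with the analogous statements for left multiplication. The plan is to reduce to the single-object case already handled in the previous lemma, using the fact that $\Kbp$ is defined in terms of the minimal complex and that the class of such complexes is closed under the operations we need. I would prove the first assertion and note that the second and the left-multiplication variants follow by entirely symmetric arguments (replacing $F_s$ by $E_s$, reversing the inequalities on shifts, and using the left-handed version of the preceding lemma).

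\emph{Main argument for (1).} Let $F\in\Kbp$, and without loss of generality take $F$ to be minimal, so that every indecomposable summand $B$ of $^iF$ satisfies $B\cong B_x(k)$ for some $x\in\W$ and some $k\le i$. The complex $FF_s$ is obtained by tensoring $F$ with the two-term complex $F_s=(0\to B_s\to R(1)\to 0)$ with $B_s$ in degree $0$; concretely, $FF_s$ is the total complex of a double complex whose columns are $FB_s$ (shifted) and $F(1)$, so $^i(FF_s)$ is built from summands of $^iF\otimes B_s$ and of $^{i+1}F(1)$. For a summand $B_x(k)$ of $^{i}F$ with $k\le i$: if $xs<x$, then by part~(1) of the preceding lemma $B_x(k)F_s\cong B_x(k-1)$, which contributes an indecomposable $B_x(k-1)$ in homological degree $i$ with $k-1\le i$, so this is fine; if $xs>x$, then by part~(2) $B_x(k)F_s\in\Kbp\cap\Kbn$, and after the overall shift by $k$ its indecomposable summands in homological degree $i$ have shift at most $i$ (indeed the bound $\Kbp$ is preserved under homological shifts combined with grading shifts by the same amount, since the complex $B_xF_s$ sits in homological degrees $\{0,1\}$ and is in $\Kbp$, so a summand in degree $1$ has shift $\le 1+k-k = $ the right bound — one checks the arithmetic carefully). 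Collecting these contributions shows that every indecomposable summand of $^i(FF_s)$ is of the form $B_y(\ell)$ with $\ell\le i$, hence $FF_s\in\Kbp$ once we pass to its minimal complex (removing contractible summands can only destroy, never create, summands in a given degree, so the property is inherited).

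\emph{The delicate point.} The subtlety I expect to be the main obstacle is bookkeeping of the grading shifts through the tensor product and, more importantly, the passage to the minimal complex: a priori $FF_s$ need not be minimal even if $F$ is, so one must argue that taking its minimal complex cannot introduce an indecomposable $B_y(\ell)$ with $\ell>i$ in homological degree $i$. This is where one uses that $\Kbp$ is by definition a property of the minimal complex: since the (possibly non-minimal) complex we constructed already has all its degree-$i$ indecomposables with shift $\le i$, and a minimal complex is obtained by Gaussian elimination which only deletes summands (together with matching summands in adjacent degrees) without altering the remaining ones, the minimal complex of $FF_s$ a fortiori has the required property. A clean way to phrase this is to observe that the property ``all degree-$i$ indecomposable summands have shift $\le i$'' holds for \emph{some} complex representing the object in $\Kb$ and is inherited by the minimal one; this is precisely the formulation Elias and Williamson use, so I would cite the relevant discussion in \cite[Section 6.1]{EW} for the fact that minimality-based properties behave well under these reductions, and then the lemma follows formally from the single-object statement.
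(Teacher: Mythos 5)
The paper's own proof of this lemma is a two-sentence citation of \cite[Lemma 6.6]{EW} for the $F_s$ statements, with the $E_s$ statements declared similar; your argument is a reconstruction of that cited proof, so the approach is the same. Two points in your write-up need repair, one cosmetic and one substantive. Cosmetically, since $F_s$ has $B_s$ in degree $0$ and $R(1)$ in degree $1$, the degree-$i$ term of the total complex is $({}^iF\otimes B_s)\oplus({}^{i-1}F)(1)$, not $({}^iF\otimes B_s)\oplus({}^{i+1}F)(1)$; with the correct index the $R(1)$-column contribution consists of summands $B_x(k+1)$ with $k\le i-1$, which satisfy the bound immediately.

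Substantively, in the case $xs<x$ you cannot simply say that a summand $B_x(k)$ of ${}^iF$ ``contributes $B_x(k-1)$'': the row $B_x(k)\otimes F_s$ is not a direct summand of the total complex $FF_s$ (the differential $d_F\otimes 1$ connects distinct rows), and the raw total complex genuinely fails the $\Kbp$ bound --- for $k=i$ the summand $B_x(i)\otimes B_s\cong B_x(i+1)\oplus B_x(i-1)$ places $B_x(i+1)$ in homological degree $i$. So the representative you claim to have constructed does not yet have the required property, and your final descent-to-the-minimal-complex step is being applied to the wrong complex. The fix is the one Elias--Williamson use: the component of the total differential from that offending $B_x(k+1)\subseteq{}^iF\otimes B_s$ to the matching $B_x(k+1)\subseteq({}^iF)(1)\subseteq{}^{i+1}(FF_s)$ is the relevant component of $1\otimes\mu_s$ (the $d_F\otimes 1$ part changes rows within a column and does not contribute to this component), and it is an isomorphism precisely by part (1) of the preceding lemma. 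Gaussian elimination on these pairs then produces a representative of $FF_s$ all of whose degree-$i$ indecomposable summands have shift at most $i$ --- the $xs>x$ rows already satisfy the bound without any cancellation --- and only at that point does your (correct) observation that the property passes to the minimal complex finish the proof.
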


\begin{proof}
The statements involving the complex $F_s$ are \cite[Lemma 6.6]{EW}. The proofs of the statements involving the $E_s$ are similar. 
\end{proof}
The following result is \cite[Corollary 6.7]{EW}
\begin{corollary}\label{cor:positivelift}
Let $w\in \W$. Then $F_w\in \Kbp$ and $E_w\in \Kbn$. 
\end{corollary}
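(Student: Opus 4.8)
The plan is to argue by induction on $\ell(w)$, feeding the previous two lemmas.

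For the base case $w=e$, the complex $F_e$ is the monoidal unit $R$ concentrated in homological degree $0$, whose unique indecomposable summand is $B_e=R$ sitting with shift $0\le 0$; hence $F_e\in\Kbp$. Symmetrically $E_e=R\in\Kbn$. (One can equally start the induction at $\ell(w)=1$, where $F_s\in\Kbp$ and $E_s\in\Kbn$ follow directly from the description of these complexes, but the case $w=e$ is cleaner.)

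For the inductive step, assume the statement holds for all elements of length $<\ell(w)$ and pick $s\in\S$ with $ws<w$, so $\ell(ws)=\ell(w)-1$ and $w$ admits a reduced expression ending in $s$. Since $F_w$ does not depend on the chosen reduced expression up to canonical isomorphism in $\Kb$ (as recalled in Subsection~\ref{sec:rouquier}), we may write $F_w\cong F_{ws}F_s$ and likewise $E_w\cong E_{ws}E_s$. By the inductive hypothesis $F_{ws}\in\Kbp$, so Lemma~\ref{lem:fses}(1) gives $F_w\cong F_{ws}F_s\in\Kbp$; similarly $E_{ws}\in\Kbn$, so Lemma~\ref{lem:fses}(2) gives $E_w\cong E_{ws}E_s\in\Kbn$. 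This completes the induction.

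There is no real obstacle here: the entire content has been absorbed into the preceding lemmas (in turn lifted from \cite{EW}), and the corollary is a formal induction. The only point deserving a moment of care is that passing from $w$ to $ws$ uses the reduced-expression independence of $F_w$ and $E_w$, and that $\Kbp,\Kbn$ are closed under isomorphism in $\Kb$ (immediate, since they are defined through the minimal complex, which is an invariant of the isomorphism class).
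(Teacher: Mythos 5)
Your proof is correct and is exactly the intended argument: the paper itself simply cites \cite[Corollary 6.7]{EW} for this statement, and the proof there is precisely this induction on $\ell(w)$ via the multiplication lemma (Lemma~\ref{lem:fses}), with the base case $F_e=E_e=R$. Nothing further is needed.
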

Notice that more generally, a Rouquier complex for any positive braid lies in $\Kbp$. Elias and Williamson show in addition that $F_w\in \Kbn$. More precisely, they show the following

\begin{theorem}[{\cite[Theorem 6.9]{EW}}]\label{thm:ewlin}
Let $w\in\W$ with minimal Rouquier complex $F_w$. Then 
\begin{enumerate}
\item $^0 F_w=B_w$.
\item for $i\geq 1$, $^i F_w=\bigoplus B_z(i)^{\oplus m_{z,i}}$ for $z<w$ and $m_{z,i}\in\mathbb{Z}_{\geq 0}$. 
\end{enumerate}
In particular, $F_w\in\Kbp\cap\Kbn$. 
\end{theorem}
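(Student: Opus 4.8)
The plan is to prove the statement by induction on $\ell(w)$, following Elias--Williamson, the only non-formal ingredient being the Libedinsky--Williamson formula (Proposition~\ref{prop:libwill} in the case $A=\emptyset$), which computes the standard filtration of $F_w$ as $\Delta_w$ concentrated in homological degree $0$. The cases $\ell(w)\le 1$ are immediate from the shapes of $F_e=R$ and $F_s=[B_s\to R(1)]$. For the inductive step one already knows from Corollary~\ref{cor:positivelift} (and the fact that a Rouquier complex of a positive braid lies in $\Kbp$) that $F_w\in\Kbp$, so in homological degree $i$ every indecomposable summand of the minimal complex $F_w$ has the form $B_z(k)$ with $z\le w$ and $k\le i$; moreover $F_w$ is supported in homological degrees $0,\dots,\ell(w)$. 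It remains to see that $k\ge i$ (i.e.\ $F_w\in\Kbn$, whence linearity), that the only summand in degree $0$ is $B_w$, and that $z<w$ once $i\ge 1$.

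First I would pin down the degree-zero part and the location of $B_w$. By the Libedinsky--Williamson formula with $A=\emptyset$, the top layer $\Gamma_{\geq \ell(w)/>\ell(w)}^\emptyset(F_w)$ is isomorphic in $\Kb$ to $\Delta_w^\emptyset=R_w(-\ell(w))$ placed in homological degree $0$. On the other hand, by Lemma~\ref{lem:mutiplbruhat} a summand $B_z$ of $F_w$ contributes a $\Delta_w^\emptyset$ to this layer only if $w\le z$, hence only if $z=w$, in which case by Corollary~\ref{cor:mult} the summand $B_w(k)$ contributes exactly $\Delta_w^\emptyset(k)$ in its homological degree. Thus $\Gamma_{\geq\ell(w)/>\ell(w)}^\emptyset(F_w)$ is a complex of shifted copies of $R_w$ whose differential is induced from that of $F_w$; since $F_w$ is minimal and $\mathrm{Hom}(B_w,B_w)$ has one-dimensional degree-zero part, every entry of this differential is either zero or multiplication by a homogeneous polynomial of positive degree, hence strictly raises the internal shift. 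Such a complex has no isomorphism among its differential entries, so it is minimal, and by uniqueness of minimal complexes it must equal $\Delta_w^\emptyset$ in homological degree $0$. Consequently $^0F_w$ contains exactly one copy of $B_w$, namely $B_w$ itself, and no summand of the form $B_w(k)$ occurs in any other homological degree; in particular, for $i\ge 1$, any summand $B_z(k)$ of $^iF_w$ has $z<w$.

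The remaining — and genuinely hard — point is the lower bound $k\ge i$, since $\Kbn$ is \emph{not} stable under $-\otimes F_s$, so linearity cannot be propagated along a reduced word by formal manipulation alone (even the contravariant duality with $D(F_w)=E_{w^{-1}}$, which exchanges $\Kbp$ and $\Kbn$, only trades $F_w\in\Kbn$ for the symmetric statement $E_{w^{-1}}\in\Kbp$). Here I would again invoke Proposition~\ref{prop:libwill}: for every $z<w$ the layer $\Gamma_{\geq z/>z}^\emptyset(F_w)$, a complex of shifted copies of $R_z$ (Remark~\ref{rem:refine}), is contractible. Writing $F_w=F_sF_{w'}$ with $sw<w$, $w'=sw$, and using the inductive linearity of $F_{w'}$, one computes $F_sF_{w'}$ before minimalization; the only summands forcing minimalization are, on the one hand, the too-positive $B_z(i+1)$ and too-negative $B_z(i-1)$ produced by $B_sB_z\cong B_z(1)\oplus B_z(-1)$ for the $z<w'$ with $sz<z$ occurring in $^iF_{w'}$, and, on the other hand (in degree $0$), the extra $B_y$'s coming from $B_sB_{w'}\cong B_w\oplus\bigoplus_y B_y^{\oplus\mu(y,w')}$. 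The too-positive summands must cancel because $F_w\in\Kbp$, and comparison with the first part forces the degree-zero $B_y$'s to cancel as well; the content is to show the too-negative $B_z(i-1)$ cancel too, and this is exactly what the contractibility of each $\Gamma_{\geq z/>z}^\emptyset(F_w)$, combined with the non-negativity of the graded multiplicities $[B_y:\Delta_z^\emptyset(m)]$ (Soergel's conjecture, Theorem~\ref{sconj}, via Theorem~\ref{thm:KL} and Lemma~\ref{lem:mutiplbruhat}), yields: tracking which copies of $R_z$ survive in the $\Delta_z$-isotypic layer and using that its differentials only raise the internal shift, a contractible such complex can only be trivial, which kills the offending $B_z(i-1)\subseteq{}^iF_w$. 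Assembling these cancellations degree by degree gives $^0F_w=B_w$ and $^iF_w=\bigoplus_{z<w}B_z(i)^{\oplus m_{z,i}}$ for $i\ge 1$, hence $F_w\in\Kbp\cap\Kbn$.

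I expect the third paragraph to be the main obstacle: the naive induction manufactures $\Kbn$-violating summands at every step, and their disappearance is not a consequence of $F_{w'}$ being linear but encodes the positivity of inverse Kazhdan--Lusztig polynomials. The delicate bookkeeping is to identify precisely which shifted copies of $R_z$ remain in each $\Delta_z$-isotypic layer of the standard filtration of $F_sF_{w'}$ and to match the contractibility statement coming from Proposition~\ref{prop:libwill} against the constraint $F_w\in\Kbp$; everything else (the base cases, the behaviour of $B_sB_z$, the minimal-complex uniqueness) is routine.
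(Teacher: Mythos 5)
You are reconstructing a theorem that the paper itself does not prove: it is quoted from \cite[Theorem 6.9]{EW}, and the only argument of this kind actually written out in the paper is the dual statement for $E_w$ (Proposition \ref{thm:costand}), whose proof mirrors Elias--Williamson. Measured against that, your second paragraph is sound and close to what is needed: applying Proposition \ref{prop:libwill} with $A=\emptyset$ to the top layer, using Lemma \ref{lem:mutiplbruhat} and Corollary \ref{cor:mult} to see that only $B_w(k)$-summands contribute, and using minimality plus the one-dimensionality of degree-zero endomorphisms (a consequence of Theorem \ref{sconj}) to conclude that the $w$-layer of the minimal complex is literally $\Delta_w^{\emptyset}$ in degree $0$, does pin down a unique summand $B_w$ sitting in homological degree $0$ with shift $0$.

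The gap is in your third paragraph, which is where the theorem actually lives. Your mechanism is: the $\Delta_z^{\emptyset}$-isotypic layer $\Gamma_{\geq z/>z}^{\emptyset}(F_w)$ is contractible by Proposition \ref{prop:libwill}, its differentials ``only raise the internal shift'', hence it is a minimal contractible complex, hence zero, which kills the offending summands. This fails for $z<w$: minimality and Soergel's hom formula (via Theorem \ref{sconj}) do force every nonzero component $B_y(j)\to B_{y'}(j')$ of the differential to have $j'>j$, but the copies of $R_z$ inside $B_y(j)$ occur in internal shifts $j+m$ with $m$ running over the support of $[B_y:\Delta_z^{\emptyset}(m)]_{\emptyset}$, so two copies in adjacent homological degrees can have equal internal shift and the induced entry can be an isomorphism. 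Already for $w=st$ the $e$-layer of the minimal complex $F_{st}$ is a nonzero contractible complex, with isomorphism entries induced by the non-invertible map $\mu_s:B_s(1)\to R(2)$. So the layer is contractible but not minimal, and its contractibility kills nothing by itself; your argument is only correct for the top layer $z=w$, where Corollary \ref{cor:mult} guarantees one copy per summand. The correct mechanism --- the one the paper uses for Proposition \ref{thm:costand}, following \cite[Lemma 6.11]{EW} --- is pointwise: fix a summand $B_z(j)$ of ${}^iF_w$ with $z<w$, note it contributes exactly one copy $\Delta_z^{\emptyset}(j)$ to the layer, use contractibility to argue this copy must be matched isomorphically with a copy in homological degree $i\pm1$, and then use the \emph{strict} positivity $[B_y:\Delta_z^{\emptyset}(m)]_{\emptyset}\neq0\Rightarrow m>0$ for $z<y$ (Theorem \ref{thm:KL}(1) together with Theorem \ref{sconj}; mere non-negativity of multiplicities is not enough) plus the shift-raising property of the differential to exclude one of the two directions; iterating the resulting chain of summands and using boundedness of the complex gives both ${}^0F_w=B_w$ and the shift bounds. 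Note also that your claim that the degree-zero summands $B_y$, $y<w$, ``cancel by comparison with the first part'' is unjustified: such summands are invisible in the $w$-layer, and excluding them requires exactly this chain lemma. As written, the induction $F_w=F_sF_{w'}$ manufactures the $\Kbn$-violating summands but supplies no reason for them to disappear, which you yourself flag; that missing step is the theorem.
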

We can get an analogous result for the complexes $E_w$ by dualizing Elias and Williamson's argument, which will allow us to derive the linearity of Rouquier complexes of the braids mentioned at the beginning of the section:

\begin{proposition}\label{thm:costand}
Let $w\in\W$ with minimal Rouquier complex $E_w$. Then 
\begin{enumerate}
\item $^0 E_w=B_w$.
\item for $i\leq -1$, $^i E_w=\bigoplus B_z(i)^{\oplus m_{z,i}}$ for $z<w$ and $m_{z,i}\in\mathbb{Z}_{\geq 0}$. 
\end{enumerate}
In particular, $E_w\in\Kbp\cap\Kbn$.
\end{proposition}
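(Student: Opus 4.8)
The plan is to deduce Proposition \ref{thm:costand} from Elias--Williamson's Theorem \ref{thm:ewlin} by a duality argument, exactly as the authors suggest. The relevant duality is the one relating the complexes $F_w$ and $E_w$: by definition $E_w = (F_{w^{-1}})^{-1}$ in $\Kb$, so I expect the cleanest route is to apply a contravariant monoidal duality functor $\mathbb{D}$ on $\Kb$ (induced by the standard duality $M\mapsto \mathrm{Hom}^{\bullet}_{\text{left}}(M,R)$ on $\mathcal{B}$, which fixes each $B_w$ up to isomorphism and sends $B_w(k)$ to $B_w(-k)$) together with the inversion anti-automorphism of the Artin--Tits group. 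Concretely, one checks that $\mathbb{D}(F_s) \cong E_s[\text{something}]$ or, more precisely, that $\mathbb{D}$ takes the minimal Rouquier complex $F_{w^{-1}}$ to the minimal Rouquier complex $E_w$, possibly up to an overall homological shift and grading shift that one tracks carefully from the $\ell(w)=1$ case $\mathbb{D}(F_s) = \mathbb{D}(0\to B_s\to R(1)\to 0)$. Since $\mathbb{D}$ is contravariant on complexes, it reverses homological degree; combined with the fact that $\mathbb{D}$ negates grading shifts, a summand $B_z(i)$ sitting in homological degree $i$ in $F_{w^{-1}}$ gets sent to a summand $B_z(-i)$ in homological degree $-i$ in $E_w$. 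Feeding Theorem \ref{thm:ewlin} through this gives precisely statements (1) and (2): $^0E_w = \mathbb{D}(^0F_{w^{-1}}) = \mathbb{D}(B_{w^{-1}}) = B_w$ (using that $B_{w^{-1}}$ is self-dual and that duality commutes with the indexing $z\mapsto z^{-1}$ built into $F_{w^{-1}}$ versus $E_w$), and for $i\le -1$, $^iE_w = \mathbb{D}(^{-i}F_{w^{-1}})$ is a sum of $B_z(i)^{\oplus m_{z,i}}$ with $z^{-1} < w^{-1}$, i.e. $z<w$.

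The key steps, in order: (i) Fix the duality functor $\mathbb{D}:\mathcal{B}\to\mathcal{B}^{\mathrm{op}}$ and recall that it is monoidal-contravariant and satisfies $\mathbb{D}(B_w)\cong B_w$, $\mathbb{D}(B(k))\cong \mathbb{D}(B)(-k)$; extend it to $K^b(\mathcal{B})$, noting that it reverses homological degree and preserves minimality (it takes contractible summands to contractible summands, so it takes minimal complexes to minimal complexes). (ii) Verify on generators that $\mathbb{D}(F_s)\cong E_s$ up to the relevant shifts — this is the one genuinely computational point — and conclude by the monoidality of $\mathbb{D}$ and the multiplicativity of Rouquier complexes that $\mathbb{D}(F_{w^{-1}})$ is, up to an overall shift, a minimal Rouquier complex for the inverse braid, hence isomorphic to $E_w$ (using uniqueness of minimal complexes stated in Subsection \ref{sec:rouquier}). (iii) Transport Theorem \ref{thm:ewlin} through $\mathbb{D}$ to obtain (1) and (2), taking care that the Bruhat relation $z<w$ is preserved under $z\mapsto z^{-1}$ (it is, since $\ell(z^{-1})=\ell(z)$ and Bruhat order is invariant under inversion). (iv) Deduce $E_w\in\Kbp\cap\Kbn$: the $i\le 0$ part follows from (1)--(2) since all shifts are $\le i$ there (giving $E_w\in\Kbp$ in nonpositive degrees and $\Kbn$ is immediate), and Corollary \ref{cor:positivelift} already gives $E_w\in\Kbn$; combining, $E_w\in\Kbp\cap\Kbn$.

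The main obstacle I anticipate is bookkeeping of shifts in step (ii): making sure the overall homological and internal grading shifts are tracked consistently so that $\mathbb{D}(F_{w^{-1}})$ really lands on $E_w$ on the nose (rather than on $E_w$ twisted by some shift), and in particular pinning down that the degree-zero terms match to give $^0E_w = B_w$ rather than $B_w(k)$ for some $k\ne 0$. This is resolved by checking the rank-one case $\W = \langle s\rangle$ explicitly — where $F_s = (B_s\to R(1))$, $E_s = (R(-1)\to B_s)$, and $\mathbb{D}(B_s)\cong B_s(?)$ with the precise shift forced by self-duality of $B_s$ in the chosen normalization — and then invoking multiplicativity. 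Alternatively, one can sidestep the duality functor entirely and dualize Elias--Williamson's \emph{argument} line by line: reprove the analogues of the lemmas preceding Theorem \ref{thm:ewlin} (i.e. \cite[Lemmas 6.5, 6.6, Corollary 6.7, Theorem 6.9]{EW}) with $F_s$ replaced by $E_s$ throughout, using that $E_s\otimes-$ and $-\otimes E_s$ satisfy the mirror-image reductions $B_xE_s\cong B_x(1)$ when $xs<x$; this is the route already used implicitly for the lemmas in this section (whose proofs are stated to be "similar"), and it yields the statement with no shift-tracking subtleties at the cost of repeating the induction. Either way, the substance is entirely borrowed from \cite{EW}; the only new content is the observation that everything dualizes.
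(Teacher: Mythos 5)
Your proposal is sound, but your primary route is genuinely different from the one the paper takes. The paper does \emph{not} transport Theorem \ref{thm:ewlin} through a duality functor; instead it dualizes Elias--Williamson's \emph{argument}: it proves the $E_w$-analogue of \cite[Lemma 6.11]{EW} (if $^iE_w$ contains a summand $B_z(j)$ then $^{i+1}E_w$ contains a summand $B_{z'}(j')$ with $j'>j$, $z'>z$), and the proof of that lemma uses two ingredients specific to this paper: the ``negativity'' of the $A=\T$ twisted character of $B_w$ (i.e.\ $[B_w:\Delta_z^\T(i)]_\T\neq 0$ for $z<w$ forces $i<0$, a consequence of Soergel's conjecture via Theorem \ref{threeparam}), and the generalized Libedinsky--Williamson formula (Proposition \ref{prop:libwill}) applied with $A=\T$ to pin down the unique surviving $\Delta_z^\T$ subquotient of $E_w$; statement (1) is then extracted from $\Gamma_{\geq w/>w}^\T(E_w)\cong\Delta_w^\T$, and $\Kbn$ comes from Corollary \ref{cor:positivelift}. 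Your duality-functor route buys a shorter argument that avoids re-running the induction and avoids any appeal to the $\T$-twisted filtration, at the cost of setting up $\mathbb{D}$ on $K^b(\mathcal{B})$ and verifying self-duality of the $B_w$, preservation of minimality, and the rank-one normalization $\mathbb{D}(F_s)\cong E_s$ --- all standard, so this is a legitimate and arguably cleaner proof. The one point to be careful about, which you correctly flag, is the monoidality convention: with the order-preserving duality one gets $\mathbb{D}(F_w)\cong E_w$ directly and $^0E_w=\mathbb{D}(B_w)\cong B_w$ with no inversion of indices, whereas your formulation with $F_{w^{-1}}$ implicitly uses the functor composed with the left/right flip (which sends $B_z$ to $B_{z^{-1}}$); either convention works once fixed consistently, and the Bruhat condition $z<w$ is inversion-invariant as you note. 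Your fallback route (reproving the EW lemmas with $E_s$ in place of $F_s$) is essentially what the paper does, though the paper's execution of the key lemma is more specific than your sketch suggests.
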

We give the proof for the sake of completeness. It relies on the following Lemma (which is an analogue of \cite[Lemma 6.11]{EW} for the complexes $E_w$): 

\begin{lemma}
If $^i E_w$ contains a summand isomorphic to $B_z(j)$ for some $z<w$, then $^{i+1} E_w$ contains a summand isomorphic to $B_{z'}(j')$ with $j'>j$, $z'>z$.  
\end{lemma}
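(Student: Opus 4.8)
The plan is to mirror Elias–Williamson's argument for $F_w$ (their Lemma 6.11), but carried out "from the right and with shifts reversed" to match the structure of the complex $E_w = E_sE_t\cdots E_u$. First I would set up the induction on $\ell(w)$, picking $s\in\S$ with $sw < w$ (or $ws<w$; let us fix a reduced word so that $E_w \cong E_s E_{w'}$ with $w' = sw$, $\ell(w')=\ell(w)-1$). By the inductive hypothesis, Proposition \ref{thm:costand} holds for $E_{w'}$, so in particular $E_{w'}\in\Kbn$, and by Lemma \ref{lem:fses}(2) also $E_w = E_s E_{w'}\in\Kbn$ (once the roles of left/right multiplication are adjusted as in the last sentence of that lemma). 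The point of the present Lemma is the sharper \emph{strict} increase of the shift: if $B_z(j)$ is a summand of $^iE_w$ with $z<w$, then $^{i+1}E_w$ has a summand $B_{z'}(j')$ with $j'>j$ and $z'>z$.

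The key mechanism, exactly as in \cite[Lemma 6.11]{EW}, is to use the $A$-filtration — here the costandard filtration, i.e. $A=\T$, or more precisely Proposition \ref{prop:libwill} together with Proposition \ref{thm:costand} applied to the canonical lift — to detect which $B_{z'}$ \emph{must} appear. Concretely, I would argue by contradiction: suppose $^{i+1}E_w$ has no summand $B_{z'}(j')$ with $z'>z$ and $j'>j$. One then looks at the piece of the minimal complex supported on $\{v : v\geq z\}$ (in Bruhat order, or the relevant twisted order), i.e. applies $\Gamma_{\geq z}$ to the minimal complex $E_w$. The differential out of the copy of $B_z(j)$ in homological degree $i$ lands, after projecting to this subquotient, in a sum of $B_v(j')$ with $v\geq z$; minimality of the complex forces the component of this differential landing in $B_z(j)\to B_z(j)$-type summands to vanish (no identity components between equal summands in a minimal complex), while summands $B_v(j')$ with $v>z$ in degree $i+1$ are excluded by our assumption except possibly with $j'\leq j$. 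Then a degree/parity count on $\mathrm{Hom}^\bullet(B_z, B_v)$ — using that these Hom-spaces are concentrated in degrees $\geq \ell(v)-\ell(z)>0$ — shows the relevant differential component is forced to be zero, so $B_z(j)$ in degree $i$ splits off as a contractible summand (it has no incoming or outgoing nonzero differential in the relevant subquotient), contradicting minimality of $E_w$. This is the standard "split off a contractible summand" trick; the subtlety is bookkeeping the filtration subquotients, for which Proposition \ref{prop:libwill} (with $A$ the appropriate biclosed set, in particular $\T$) and Proposition \ref{prop:mpair} are the tools.

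For the inductive step I would then run the usual distinguished triangle argument: from $E_s E_{w'}$ one gets a triangle relating $E_w$ to $R(-1)E_{w'}$ and $B_s E_{w'}$ (coming from the defining triangle of $E_s$), and one reads off the homological degree $i$ part of the minimal complex of $E_w$ from those of $E_{w'}$ after cancelling contractible summands. The cases $ws>w$ and $ws<w$ (or $sw$ vs $ws$, depending on which side one multiplies) are handled exactly as in \cite[Theorem 6.9]{EW} and \cite[Lemma 6.5]{EW}: when multiplying by $E_s$ on a side where length drops one gets a pure shift $B_w\mapsto B_w(1)$-type behaviour at the ends, and when length rises one uses that $B_z E_s \in \Kbp\cap\Kbn$ to keep control of shifts. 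Assembling these, the two displayed conclusions of Proposition \ref{thm:costand} follow, and the present Lemma is precisely the auxiliary statement that makes the "no contractible summand survives" step go through.

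The main obstacle I expect is \emph{not} conceptual — the whole scheme is dual to \cite{EW} — but rather making sure the left/right asymmetry and the sign/shift conventions are consistent: the complex $E_w$ has its nontrivial part in nonpositive homological degrees, so the direction of the shift inequality ($j'>j$ going from degree $i$ to $i+1$, i.e. "upward" in both homological and grading degree as one moves toward degree $0$) must be tracked with care, and one must verify that the analogues of \cite[Lemma 3.1]{LW}, \cite[Lemma 5.8]{S} and \cite[Lemma 4.5]{S} invoked in Proposition \ref{prop:libwill} are being used with the correct variance. Once those conventions are pinned down, the degree count on $\mathrm{Hom}^\bullet(B_z,B_{z'})$ forcing $z'>z$ and $j'>j$ is routine, given Soergel's Hom formula and positivity (Soergel's conjecture, Theorem \ref{sconj}).
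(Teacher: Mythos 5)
Your overall plan (dualizing Elias--Williamson's Lemma 6.11 via the costandard filtration $A=\T$) is the right one, but the step that is supposed to deliver the contradiction does not work as written, and it is precisely where the real content lies. Concluding that $B_z(j)$ "splits off as a contractible summand, contradicting minimality of $E_w$" is a non sequitur: a summand of a complex with vanishing incoming and outgoing differential components is a direct summand concentrated in one homological degree, not a contractible summand, so minimality of $E_w$ is not violated. The contradiction must instead come from the vanishing statement of Proposition \ref{prop:libwill}: since $E_w$ is the minimal Rouquier complex of the Mikado braid $w_\T$ (Lemma \ref{lem:Mikado} $(2)$), for $z<w$ the whole subquotient complex $\Gamma_{\geq z/>z}^{\T}(E_w)$ is nullhomotopic, and a bounded contractible complex of sums of shifts of $R_z$ splits into identity pairs; hence the rank-one subquotient $\Delta_z^{\T}(j)$ contributed by your summand $B_z(j)$ (Corollary \ref{cor:mult}) must cancel isomorphically against a $\Delta_z^{\T}(j)$ in homological degree $i+1$ or $i-1$. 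In your sketch Proposition \ref{prop:libwill} is invoked only as "bookkeeping" and this vanishing, which is the engine of the argument, is never used.

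Moreover, you never exclude the backward cancellation against degree $i-1$, and that is the other substantive ingredient: a $\Delta_z^{\T}(j)$ subquotient of a summand $B_y(j')$ of $^{i-1}E_w$ forces $j'>j$ because the $\Delta^{\T}$-character of an indecomposable is "negative" ($[B_y:\Delta_z^{\T}(m)]_{\T}\neq 0$ with $z<y$ implies $m<0$), a consequence of Soergel's conjecture (Theorem \ref{sconj}) and the costandard character formula (\ref{costand}); combined with the minimality/Hom-degree fact that nonzero components into $B_z(j)$ only come from $B_y(k)$ with $k<j$, this rules out the backward case, so the cancellation is forward and is induced by a nonzero map $B_z(j)\to B_{z'}(j')$ with $z'>z$, $j'>j$. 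Two smaller points: the negation of the statement does not exclude summands $B_z(j')$ with $j'>j$ in degree $i+1$, so even inside the subquotient the outgoing differential need not vanish (positive-degree maps $R_z(j)\to R_z(j')$ exist), which is another reason the "no outgoing differential" claim fails; and $\mathrm{Hom}^{\bullet}(B_z,B_v)$ is not concentrated in degrees $\geq \ell(v)-\ell(z)$ in general (degree-one morphisms exist whenever the relevant $\mu$-coefficient is nonzero) --- what is needed, and what Soergel's conjecture provides, is only that degree-zero morphisms between non-isomorphic indecomposables vanish. Finally, the induction on $\ell(w)$ and the distinguished-triangle analysis you describe belong to the proof of Proposition \ref{thm:costand}, which uses the present Lemma; the Lemma itself is proved directly, with no induction.
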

\begin{proof}
Notice that the $\nabla$-character as defined in \cite{EW} corresponds to our $A$-filtration for $A=\T$. We have that $[B_w:\Delta_w^\T]_\T=1$ while $[B_w:\Delta_w^\T(i)]_\T=0$ for $i\neq 0$ (see Corollary \ref{cor:mult}). It follows from Soergel's conjecture \ref{sconj} and (\ref{costand}) that for $z<w$, $[B_w:\Delta_z^\T(i)]_\T\neq 0$ implies that $i<0$ (we will say that the $\Delta^\T$-character is "negative"). Indeed, one has that 
$$C'_w\in H_w+\sum_{z<w} v\mathbb{Z}_{\geq 0}[v] H_z,$$
hence dualizing we get 
$$C'_w\in H_{w^{-1}}^{-1}+\sum_{z<w} v^{-1}\mathbb{Z}_{\geq 0}[v^{-1}] H_{z^{-1}}^{-1}.$$
Comparing with Theorem \ref{threeparam} we get the statement (notice that $H_{x^{-1}}^{-1}= v^{-\ell(x)} T_{x^{-1}}^{-1}=v^{-\ell(x)} T_{x,\T}=v^{\ell_\T(x)} T_{x,\T}$ for any $x\in\W$ by Lemma \ref{lem:genbase} (2)). 

Consider a summand $B_z(j)$ of $^i F_w$. Arguing exactly as in the first paragraph of the proof of \cite[Lemma 6.11]{EW}, we have that our summand $B_z(j)$ maps in the complex $E_w$ to a sum of $B_y(k)$ for $k>j$. Moreover, any nontrivial map to our summand $B_z(j)$ must come from a summand $B_y(k)$ with $k<j$. 

We now fix a total order on $\W$ refining $<_A$ and apply the corresponding subquotient functor $\Gamma_{\geq z/ >z}^\T$ to the complex $E_w$ for $z\in\W$. The resulting complex has a summand in $^i \Gamma_{\geq z/ >z}^\T (E_w)=\Gamma_{\geq z/ >z}^\T (^i E_w)$ isomorphic to $\Delta_z^\T(j)$. It follows from Proposition \ref{prop:libwill} that this summand must map isomorphically to some $\Delta_z^\T(j)$ in $\Gamma_{\geq z/ >z}^\T (^{i+1} E_w)$ or be mapped to isomorphically from a summand $\Delta_z^\T(j)$ in $\Gamma_{\geq z/ >z}^\T (^{i-1} E_w)$. Assume that the second holds. The summand isomorphic to $\Delta_z^\T(j)$ in homological degree $i-1$ of $\Gamma_{\geq z/ >z}^\T (E_w)$ mapping to our summand must be a subquotient of some summand $B_y(j')$ of $^{i-1} E_w$. By negativity of the $\Delta^\T$-character we must have $j'>j$. Hence our isomorphism
$$\Delta_z^\T(j)\overset{\sim}{\longrightarrow} \Delta_z^\T(j)$$
in $\Gamma_{\geq z/ >z}^\T (E_w)$ must be induced by a nontrivial map 
$$B_y(j')\longrightarrow B_z(j)$$
from the $i-1$\ts{th} homological degree to the $i$\ts{th} with $j'>j$, but we mentioned above that this is impossible. Hence our summand $\Delta_z^\T(j)$ in $\Gamma_{\geq z/ >z}^\T (^i E_w)$ must map isomorphically to a summand $\Delta_z^\T(j)$ in $\Gamma_{\geq z/ >z}^\T (^{i+1} E_w)$. This last summand comes as a subquotient of some summand $B_{z'}(j')$ from $^{i+1} E_w$. It follows that $z'>z$ (because any subquotient of a twisted filtration of $B_{z'}$ is indexed by some element which is lower than $z'$ in Bruhat order by Lemma \ref{cor:mult}) and our isomorphism is induced by a nontrivial map 
$$B_z(j)\longrightarrow B_{z'}(j')$$
from the $i$\ts{th} homological degree to the $i+1$\ts{th} with $j'>j$ and $z' > z$, which concludes. 
\end{proof}

\begin{proof}[Proof of Theorem \ref{thm:costand}]
It follows from the above Lemma that any summand of $^0 E_w$ is isomorphic to $B_w$ since $î E_w=0$ for $i>0$. Moreover, in fact we have $^0 E_w=B_w$ because by applying $\Gamma_{\geq w/ >w}^\T (F_w)$ we must get $\Delta_w^\T$ by \cite[Proposition 3.10]{LW}; hence such a summand must come from a $B_w$ since the indecomposable summands appearing in the homological degrees of $E_w$ are indexed by elements lower than or equal to $w$ in Bruhat order, and since no other summand $B_w$ can appear in negative degrees it follows that $^0 E_w=B_w$. Using the above lemma inductively we have that the shifts of the summands in homological degree $k\leq 0$  are at most equal to $k$, hence that $E_w\in\Kbp$. But we already know that $E_w\in\Kbn$ by Corollary \ref{cor:positivelift}.
\end{proof}

\begin{theorem}\label{thm:linearity}
Let $x,y\in\W$ and $\beta=\bx^{-1} \by\in\Br$. Let $F_\beta$ denote a minimal Rouquier complex for $\beta$. Then 

\begin{enumerate}
\item $^0 F_{\beta}$ has a unique summand isomorphic to $B_w$ where $w=x^{-1}y$. All other summands of $^0 F_\beta$ are isomorphic to $B_z$ for $z<w$. 
\item For any $i\in\mathbb{Z}\backslash\{0\}$, we have $^i F_\beta=\bigoplus B_z(i)^{\oplus m_{z,i}}$ for $z<w$ and $m_{z,i}\in\mathbb{Z}_{\geq 0}$. 
\end{enumerate}
In particular, we have $F_\beta\in\Kbn\cap\Kbp$. One has a similar statement for $\beta=\bx \by^{-1}$.
\end{theorem}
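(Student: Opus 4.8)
The plan is to deduce the linearity of $F_\beta$ for $\beta=\bx^{-1}\by$ by writing $\beta$ as a product $\bx^{-1}\by=E_{x^{-1}}\cdots$ no wait, let me think about the mechanism. We have $\beta=\bx^{-1}\by$, so $F_\beta\cong E_x F_y$ (since $F_{\bx^{-1}}=E_x$), and the idea is to combine Theorem~\ref{thm:ewlin} (which says $F_y\in\Kbp\cap\Kbn$) with Proposition~\ref{thm:costand} (its dual, $E_x\in\Kbp\cap\Kbn$). But the product of two complexes each lying in $\Kbp\cap\Kbn$ need not a priori lie in $\Kbp\cap\Kbn$ — the tensor product can create cancellations. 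So the argument must be more careful, and I expect it to run by reducing to a single reduced word and inducting, exactly as in the proof of Proposition~\ref{prop:libwill}.

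First I would reduce to the case where $y$ and $x$ have no common right descent; if $s\in\S$ with $xs<x$ and $ys<y$, then $\bx^{-1}\by=(\bx s^{-1})(\bs\bs^{-1})$ hmm — more precisely $\bx^{-1}\by=(\bx')^{-1}\bs^{-1}\bs\by'=(\bx')^{-1}\by'$ where $x=x's$, $y=y's$, wait that isn't right either since $\bx^{-1}=\bs^{-1}(\bx')^{-1}$ only when $sx<x$. Let me instead use a right descent $s$ of $x$ and write $\bx^{-1}=\bx'^{-1}\bs^{-1}$ where $x=x's$ with $\ell(x')<\ell(x)$; then $F_\beta\cong E_{x'}E_s F_y$. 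If $ys>y$ then $E_s F_y$ is a complex for $\bs^{-1}\by=(sy)_{?}$... The cleanest route is: among all ways of writing $\beta=\bu^{-1}\bv$, the length $\ell(u)+\ell(v)$ can be decreased whenever $u$ and $v$ share a right descent $s$ (replace $u\rightsquigarrow us$, $v\rightsquigarrow vs$), so we may assume $u,v$ share no right descent, in which case $\bv\bu^{-1}$ wait I want $\bu^{-1}\bv$ — when $u,v$ share no right descent, $uv^{-1}$... Actually the right statement is that then $w:=u^{-1}v$ satisfies $\ell(w)=\ell(u)+\ell(v)$ is false in general; rather $N(u^{-1})$ and $N(v^{-1})$ are disjoint. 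I would instead directly induct on $\ell(x)$: if $x=e$, $F_\beta=F_y$ and we are done by Theorem~\ref{thm:ewlin}; otherwise pick $s$ with $sx<x$, so $F_\beta\cong E_s F_{\beta'}$ with $\beta'=(sx)^{-1}y$, $F_{\beta'}$ linear by induction. Then I must show $E_s$ applied to a linear complex whose degree-$0$ part is $B_{sx^{-1}\cdot}$ — here the concrete control of the degree-$0$ term and the behaviour under $E_s\otimes-$ is what Lemma~\ref{lem:fses}(2) and the lemma on $B_xE_s$ give, provided $x^{-1}y$... the descent needs to be on the correct side. Since $E_s$ multiplies on the \emph{left}, I need the left-descent version: if $s\cdot(x^{-1}y)>x^{-1}y$ then $E_s$ preserves $\Kbp\cap\Kbn$ and the degree-$0$ term stays a single $B$; the point is that $sx<x$ forces $s(x^{-1}y)$... no, $s$ acts on $x$ not on $x^{-1}y$.

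The honest approach, which I would adopt, is to mirror exactly the structure of the proof of Proposition~\ref{prop:libwill} and of Elias--Williamson: prove by induction on $\ell(x)$ that $F_\beta\in\Kbp\cap\Kbn$ together with the precise statement about $^0F_\beta$, using at the inductive step that $F_\beta\cong E_sF_{\beta'}$ and that multiplying by $E_s$ on the left takes $\Kbn$ to $\Kbn$ unconditionally (Lemma~\ref{lem:fses}(2)) and takes $\Kbp$ to $\Kbp$ \emph{when it does not lengthen} (i.e. when the relevant left multiplication is "up"), invoking the $B_zE_s\in\Kbp\cap\Kbn$ / $B_zE_s\cong B_z(1)$ dichotomy on each summand. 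Concretely: $F_{\beta'}\in\Kbp$ by induction, and we must rule out $E_s$ destroying the $\Kbp$ property; this is where a parity/support argument via Proposition~\ref{prop:libwill} (the generalized Libedinsky--Williamson formula applied with $A=N(y)$ or its complement, so that $\beta=x_A$ is a Mikado braid and its $A$-filtration has a single nonzero piece $\Delta_{x^{-1}y}^A$) pins down that the only $B_w$ with $w=x^{-1}y$ sits in degree $0$ with shift $0$, and the twisted-filtration positivity (negativity of the $\Delta^A$-character, as in the proof of the Lemma preceding Proposition~\ref{thm:costand}) forces the shifts in each homological degree to be bounded by that degree on one side and below on the other. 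The main obstacle is precisely this last point: controlling shifts so that both the $\Kbp$ and $\Kbn$ inequalities hold simultaneously — for $E_w$ alone this is Elias--Williamson/Proposition~\ref{thm:costand}, and for a genuine two-sided Mikado braid $\bx^{-1}\by$ one must combine the "positive" control from the $F_y$-side with the "negative" control from the $E_x$-side, using that the $\Delta^A$-character of $B_z$ is concentrated in nonpositive (resp. nonnegative, for the complementary biclosed set) degrees, and that the differential in a minimal complex strictly raises the shift; I would feed this through the subquotient functors $\Gamma^A_{\geq z/>z}$ and Proposition~\ref{prop:libwill} exactly as in the proof of the Lemma before Proposition~\ref{thm:costand}. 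The statement for $\beta=\bx\by^{-1}$ follows by the symmetric argument (using $F_x$ on the left and $E_y$ on the right, with $A=\T\setminus N(y^{-1})$ replaced by $N(y)$ appropriately), or formally by applying the duality/anti-automorphism of $\Kb$ sending $F_s\leftrightarrow E_s$.
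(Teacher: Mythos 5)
You assemble all the right ingredients (Theorem \ref{thm:ewlin}, Proposition \ref{thm:costand}, Lemma \ref{lem:fses}, and Proposition \ref{prop:libwill} for the degree-zero statement), and your worry is legitimate: $E_s\otimes-$ does not preserve $\Kbp$ in general, so a single induction on $\ell(x)$ prepending $E_s$'s to $F_y$ cannot by itself give both containments. But you then leave exactly this point unresolved --- you name it yourself as ``the main obstacle'' and only gesture at a hoped-for parity/support argument through the twisted filtrations. That is a genuine gap: the containment $F_\beta\in\Kbp$ is never actually established in your proposal.

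The resolution is much simpler than what you are reaching for and requires no new induction mimicking Proposition \ref{prop:libwill}: the two containments are obtained from two \emph{independent} ways of building up the same object of $\Kb$. For $\Kbn$, start from $F_y\in\Kbn$ (Theorem \ref{thm:ewlin}) and left-multiply by the $E_s$'s of a reduced word for $x^{-1}$ one at a time; each step preserves $\Kbn$ \emph{unconditionally} by Lemma \ref{lem:fses}(2). For $\Kbp$, start instead from $E_{x^{-1}}\in\Kbp$ (Proposition \ref{thm:costand}) and right-multiply by the $F_s$'s of a reduced word for $y$ one at a time; each step preserves $\Kbp$ \emph{unconditionally} by Lemma \ref{lem:fses}(1). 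Both computations yield $E_{x^{-1}}F_y\cong F_\beta$ in $\Kb$, whose minimal complex therefore lies in $\Kbp\cap\Kbn$; linearity follows. (There is also a small indexing slip in your write-up: the complex for $\bx^{-1}$ is $E_{x^{-1}}=(F_x)^{-1}$, not $E_x$.) Your handling of the degree-zero statement --- a unique summand $B_w$ in homological degree $0$ via the lifted reduced expression and Proposition \ref{prop:libwill} --- does agree with the paper.
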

\begin{proof}
We only prove the statement for $\beta=\bx^{-1}\by$, the one for $\bx \by^{-1}$ is similar. A Rouquier complex for $\bx^{-1}\by$ is given by $E_{x^{-1}} F_y$. By Theorem \ref{thm:ewlin} we have that $F_y\in \Kbp\cap\Kbn$, in particular $F_y\in\Kbn$. Applying Lemma \ref{lem:fses} inductively we get that $E_{x^{-1}} F_y\in\Kbn$. By Theorem \ref{thm:costand}, we have that $E_{x^{-1}}\in\Kbp\cap\Kbn$, in particular $E_{x^{-1}}\in\Kbp$. Applying Lemma \ref{lem:fses} inductively we get that $E_{x^{-1}} F_y\in\Kbp$. Hence $E_{x^{-1}} F_y\in \Kbn\cap\Kbp$. 


Now writing $\beta$ as a lifted reduced expression of $w=x^{-1}y$, say $s_1^{\varepsilon_1}\cdots s_k^{\varepsilon_k}$ where $s_1\cdots s_k$ is a reduced expression of $w$, we get that the corresponding complex $K_{s_1}\otimes\cdots\otimes K_{s_k}$ where $K_{s_i}=F_{s_i}$ if $\varepsilon_i=1$, respectively $K_{s_i}=E_{s_i}$ if $\varepsilon_i=-1$ has exactly one summand isomorphic to $B_w$ which sits in homological degree zero, all other summands being indexed by elements lower than $w$ in Bruhat order (alternatively one can apply Theorem \ref{prop:libwill}). This summand can therefore not be suppressed when projecting to a minimal complex, whence the result. 
\end{proof}

\subsection{Proof of inverse positivity for left and right Mikado braids}\label{sub:inverse}

Let $(\W, \S)$ be a Coxeter system. The aim of this section is to prove
\begin{theorem}\label{thm:inversepos}
Let $x,y\in \W$. Then 
$$T_{x}^{-1} T_y\in\sum_{w\in\W} \mathbb{Z}_{\geq 0}[v^{\pm 1}] C_w.$$ 
Similarly, $T_x T_y^{-1}\in\sum_{w\in\W} \mathbb{Z}_{\geq 0}[v^{\pm 1}] C_w.$
\end{theorem}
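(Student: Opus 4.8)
The plan is to follow the strategy of Elias--Williamson \cite{EW} for inverse Kazhdan--Lusztig positivity, now using the linearity statement for Mikado braids established in Theorem~\ref{thm:linearity}. First I would recall that for $x,y\in\W$ one has $T_x^{-1}T_y = T_{x^{-1}y,\,\T\backslash N(x^{-1})}$ by Lemma~\ref{lem:genbase}(4), and that $T_x T_y^{-1}=T_{xy^{-1},N(y)}$ by Lemma~\ref{lem:genbase}(3); so both elements are images under the character map of minimal Rouquier complexes $F_\beta$ for $\beta=\bx^{-1}\by$ respectively $\beta=\bx\by^{-1}$. The point is that $\mathrm{ch}$ descends to the split Grothendieck group of $\Kb$, so that if $F_\beta$ has minimal complex with $^iF_\beta=\bigoplus_z B_z(i)^{\oplus m_{z,i}}$ then, using Soergel's conjecture (Theorem~\ref{sconj}) which identifies $\langle B_z\rangle$ with $C'_z$, one obtains
$$
T_x^{-1}T_y \;=\; \sum_{i\in\mathbb{Z}}(-1)^i \sum_{z\in\W} m_{z,i}\, v^{-i}\, j_{\H}(C'_z),
$$
and similarly for $T_xT_y^{-1}$. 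By \eqref{eq:cc'} we have $j_{\H}(C'_z)=(-1)^{\ell(z)}C_z$, so the expansion is $\sum_{z,i}(-1)^{i+\ell(z)} m_{z,i} v^{-i} C_z$.

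Thus the whole difficulty is reduced to a parity statement: I must show that $m_{z,i}\neq 0$ forces $i\equiv \ell(z)\pmod 2$, so that all the signs $(-1)^{i+\ell(z)}$ are $+1$. This is exactly the analogue of \cite[proof of Theorem~1.2]{EW}. I would prove it by induction on $\ell(x)+\ell(y)$ (equivalently on the number of generators in the lifted word $s_1^{\varepsilon_1}\cdots s_k^{\varepsilon_k}$), peeling off one generator at a time. The base case $\beta=e$ gives $F_\beta = R$ in degree $0$, with $z=e$, $\ell(e)=0$, $i=0$. For the inductive step, write $\beta = \bs^{\pm1}\beta'$ (or with the letter on the right) where $\beta'$ is again a left or right Mikado braid with shorter lifted word; by induction $F_{\beta'}$ is linear with the parity property, i.e.\ $^iF_{\beta'}=\bigoplus_z B_z(i)^{\oplus m'_{z,i}}$ with $m'_{z,i}\neq 0 \Rightarrow i\equiv\ell(z)$. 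Now tensor by $F_s$ or $E_s$ and pass to a minimal complex. Using the two cases of \cite[Lemma~6.5]{EW} as recalled before Lemma~\ref{lem:fses}: if $zs<z$ then $B_zF_s\cong B_z(-1)$ and $B_zE_s\cong B_z(1)$, so the shift changes by $\mp1$ while $\ell(z)$ stays fixed and the homological degree is unchanged, which breaks parity unless these terms cancel; if $zs>z$ then $B_zF_s$ and $B_zE_s$ decompose, in $\Kbp\cap\Kbn$, with top term $B_{zs}$ in the same homological degree (here $\ell(zs)=\ell(z)+1$ so the parity of $\ell$ flips) together with lower terms $B_{y}(k)$, $y<zs$, sitting in homological degrees which — by the explicit description of $B_sB_z$ or the minimal complex of $B_zF_s$ — occur with the correct shifted parity. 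The standard bookkeeping of Elias--Williamson shows that after cancellations in the minimal complex, every surviving summand $B_z(i)$ in $^iF_\beta$ satisfies $i\equiv\ell(z)\pmod2$.

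The main obstacle, and the place where I expect to spend the most care, is exactly this last cancellation argument: a priori the terms $B_z(-1)$ (or $B_z(1)$) produced in the case $zs<z$ violate the parity constraint and appear in two consecutive homological degrees, so one has to argue that they cancel against each other when passing to the minimal complex. In \cite{EW} this is handled via the one-step description of the differential in $B_z F_s$ together with the linearity already known for $F_{\beta'}$; I would carry out the same argument using Theorem~\ref{thm:linearity} (which gives linearity of $F_{\beta'}$, so no contractible summands to worry about except precisely the "wrong parity" pairs), combined with the subquotient functors $\Gamma_{\geq w/>w}^A$ and the generalized Libedinsky--Williamson formula (Proposition~\ref{prop:libwill}) to pin down which $B_z(i)$ can actually survive. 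Concretely, applying $\Gamma_{\geq z/>z}^A$ for a suitable $A$ (either $\T\backslash N(x^{-1})$ or $N(y)$) to the minimal complex $F_\beta$ yields, by Proposition~\ref{prop:libwill}, a complex concentrated — up to homotopy — in a single homological degree with value $\Delta_z^A$; comparing this with the contribution of the summands $B_z(i)$ (whose $\Delta_z^A$-subquotient is $\Delta_z^A(i)$ by Corollary~\ref{cor:mult}) forces all but one homological degree's worth of $B_z$-summands to cancel, and a parity count of that surviving degree against $\ell_A(z)$ and $\ell(z)$ closes the induction. Assembling these gives $T_x^{-1}T_y=\sum_{z,i} m_{z,i} v^{-i} C_z$ with all $m_{z,i}\in\mathbb{Z}_{\geq0}$, and symmetrically for $T_xT_y^{-1}$, which is the assertion.
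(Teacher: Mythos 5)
Your overall route is the same as the paper's: express $T_x^{-1}T_y$ (resp.\ $T_xT_y^{-1}$) through the class of the minimal Rouquier complex of $\bx^{-1}\by$ (resp.\ $\bx\by^{-1}$) in the Grothendieck group of $\Kb$, and reduce positivity to a parity constraint on the multiplicities $m_{z,i}$ from Theorem~\ref{thm:linearity}, proved by induction peeling off one letter of the lifted word. But your bookkeeping has a concrete error which makes the parity statement you set out to prove false. The correct identity is $T_x^{-1}T_y=v^{\ell(x)-\ell(y)}\,\mathrm{cl}(E_{x^{-1}}F_y)=v^{\ell(x)-\ell(y)}\sum_{z,i}(-1)^i m_{z,i}v^{i}C_z'$; to pass to the basis $\{C_z\}$ via (\ref{eq:cc'}) you must apply $j_\H$ to \emph{both} sides, and $j_\H(T_x^{-1}T_y)=(-1)^{\ell(x)+\ell(y)}v^{2(\ell(y)-\ell(x))}\,T_x^{-1}T_y$, so the coefficient of $C_z$ carries the sign $(-1)^{i+\ell(z)+\ell(x)+\ell(y)}$, not $(-1)^{i+\ell(z)}$ as in your displayed formula (which also silently drops the harmless global factor $v^{\ell(x)-\ell(y)}$). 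Hence the parity you actually need is $m_{z,i}\neq 0\Rightarrow i\equiv \ell(z)-\ell(x^{-1}y)\pmod 2$, not $i\equiv\ell(z)\pmod 2$. Your version already fails for $\beta=\bs^{\pm 1}$: the minimal complex $E_s$ has $B_s$ in homological degree $0$ with $\ell(s)=1$, and indeed $T_s^{-1}=vC_s+v^2C_e$ is positive even though $0\not\equiv\ell(s)$. So the induction as you designed it collapses immediately after the base case $\beta=e$. (A smaller slip: Lemma~\ref{lem:genbase}(4) gives $T_x^{-1}T_y=T_{x^{-1}y,\,\T\backslash N(y^{-1})}$, not $\T\backslash N(x^{-1})$.)

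With the corrected parity target, the rest of your plan goes through and coincides with the paper's argument: write $\beta=\bs^{\varepsilon_1}\beta'$ where $\beta'$ is again a right (or left) Mikado braid with $\ell(p(\beta'))=\ell(p(\beta))-1$, decompose $^i(F_sF_{\beta'})$ or $^i(E_sF_{\beta'})$ termwise, and for $sz<z$ discard the summands $B_z(i\pm1)$ in homological degree $i$ simply because Theorem~\ref{thm:linearity} applied to $F_\beta$ itself forbids any summand in degree $i$ other than some $B_v(i)$ in the minimal complex --- your proposed detour through the subquotient functors $\Gamma^{A}_{\geq z/>z}$ and Proposition~\ref{prop:libwill} is unnecessary at this point. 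For $sz>z$ one uses $B_sB_z(i)\cong B_{sz}(i)\oplus\bigoplus_{v<z}B_v(i)^{\oplus\mu(v,z)}$ with $\mu(v,z)\neq 0$ only when $\ell(v)\not\equiv\ell(z)\pmod 2$, which is compatible with the shifted parity precisely because $\ell(p(\beta))=\ell(p(\beta'))+1$; the factor $R(1)\otimes{}^{i-1}F_{\beta'}$ is handled by the same shift of both $i$ and $\ell(p(\beta))$ by one. Fix the sign computation and restate the parity claim relative to $\ell(p(\beta))$, and your proof becomes the paper's.
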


\begin{proof}
We will only prove the first statement, the proof of the second one is similar. Using Relation (\ref{eq:cc'}) we can reformulate it as $$(-1)^{\ell(x)+\ell(y)}T_x^{-1} T_y\in\sum_{w\in\W} \mathbb{Z}_{\geq 0}[v^{\pm 1}] (-1)^{\ell(w)}C_w'.$$

Denoting by $\left\langle \Kb\right\rangle$ the Grothendieck group of the triangulated category $\Kb$, one has an isomorphism of abelian groups (see \cite{Rose}) $$\mathrm{cl}:\langle \Kb\rangle\overset{\sim}{\longrightarrow} \left\langle \mathcal{B}\right\rangle, C\mapsto \sum_{i\in\mathbb{Z}} (-1)^i\left\langle ^i C\right\rangle,$$
where $\left\langle {^i C} \right\rangle$ denotes the class of the bimodule ${^i C}\in\mathcal{B}$ in $\left\langle \mathcal{B}\right\rangle$. Moreover, the map is compatible with graduation shifts (where a complex $C(i)\in\Kb$ is defined as the complex $C$ where all summands of all homological degrees are shifted by $i$) and with the monoidal structure on $\Kb$ defined by the total tensor product of complexes (induced by the tensor product $\otimes_R$ on $\mathcal{B}$). That is, denoting by $\otimes$ the total tensor product in $\Kb$, it follows from the definition of total tensor product that for $C, D\in\Kb$ we have 
$$\mathrm{cl}(C\otimes D)=\left(\sum_{i\in\mathbb{Z}} (-1)^i \langle ^i C\rangle\right)\left(\sum_{j\in\mathbb{Z}} (-1)^j \langle ^j D\rangle\right)=\mathrm{cl}(C)\mathrm{cl}(D).$$

Hence $\mathrm{cl}$ is an isomorphism of $\mathbb{Z}[v, v^{-1}]$-algebras between $\langle \Kb\rangle$ and $\H$. In particular we have that $$T_x^{-1} T_y = v^{\ell(x)-\ell(y)}H_x^{-1} H_y=v^{\ell(x)-\ell(y)}\mathrm{cl}(E_{x^{-1}} F_y).$$
Indeed, we have $H_x= H_{s_1}\cdots H_{s_k}$ for any reduced expression $s_1 s_2\cdots s_k$ of $x\in \W$ and $$\mathrm{cl}(F_{s_i})=\langle B_{s_i}\rangle - \langle R(1)\rangle = C'_{s_i}-v= v T_{s_i}+v-v= H_{s_i}.$$

Since $(-1)^{\ell(x^{-1}y)}=(-1)^{\ell(x)+\ell(y)}$, Theorem \ref{thm:inversepos} follows if in the notations of Theorem \ref{thm:linearity} we prove that $m_{z,i}=0$ if $i$ and $\ell(z)-\ell(p(\beta))$ have different parity. Indeed, by definition of the map $\mathrm{cl}$ we have that the coefficient $g_z$ of $C_z'$ in $\mathrm{cl}(E_{x^{-1}} F_y)$ is given by $\sum_{i\in\mathbb{Z}} (-1)^{i}m_{z,i} v^i$. Our statement on the parity then implies that $$g_z=(-1)^{\ell(z)-\ell(x)+\ell(y)}\sum_{i\in\mathbb{Z}} m_{z,i} v^i,$$
which implies the result. 

The statement is proven by induction on $\ell(p(\beta))$. If it has length equal to $1$, then $F_\beta$ is equal to $F_s$ or $E_s$ for some $s\in\S$ and the claim is true in that case. Hence assume that the claim holds for right Mikado braids $\beta$ such that $\ell(p(\beta))<k$ and let $\beta=\bx^{-1}\by$ be such that $\ell(p(\beta))=k$. Let $\beta=s_1^{\varepsilon_1} s_2^{\varepsilon_2}\cdots s_k^{\varepsilon_k}$, where $s_1\cdots s_k$ is a reduced expression of $x^{-1}y$ and $\varepsilon_i=-1$ if $s_k \cdots s_i\cdots s_k\in A$ and $\varepsilon_i=1$ otherwise. 

Set $\beta':=s_2^{\varepsilon_2}\cdots s_k^{\varepsilon_k}$. Notice that $\ell(p(\beta'))=k-1$ and $\beta'=(s_1 x^{-1}y)_{\T\backslash N(y^{-1})}$, in particular it is also a right Mikado braid. Assume that $\varepsilon_1=1$, the other case being similar. Then we have $$^i (F_sF_{\beta'})\cong(B_s\otimes {^i F_{\beta'}})\oplus (R(1)\otimes {^{i-1} F_{\beta'}}).$$


Notice that $F_s F_{\beta'}\cong F_\beta$ in $\Kb$, but $F_s F_{\beta'}$ may not be a minimal complex. We analyze separately the two above summands and show that any direct summand of them appearing in a miminal complex $F_{\beta}$ must be of the form $B_v(i)$ where $\ell(p(\beta))-\ell(v)$ and $i$ have the same parity. We have $$R(1)\otimes {^{i-1} F_{\beta'}}\cong R(1)\otimes \left({\bigoplus B_z(i-1)^{\oplus m_{z,i-1}}}\right)\cong {\bigoplus B_z(i)^{\oplus m_{z,i-1}}}.$$

Now if $\ell(z)-\ell(p(\beta))$ and $i$ have different parity, since $\ell(p(\beta))=\ell(p(\beta'))+1$ we get that $\ell(z)-\ell(p(\beta'))$ and $i-1$ also have different parity, hence that $m_{z,i-1}=0$, which concludes. 

We have $$B_s\otimes {^i F_{\beta'}}\cong B_s\otimes \left(\bigoplus B_z(i)^{\oplus m_{z,i}}\right)\cong\bigoplus B_s B_z(i)^{\oplus m_{z,i}}.$$

In this situation let $z$ such that $m_{z,i}\neq 0$ If $sz<z$, then $B_s B_z(i)\cong B_z(i+1)\oplus B_z(i-1)$. But by Theorem \ref{thm:linearity}, these summands cannot survive in a minimal complex for $F_\beta$. Hence assume that $sz>z$. Then 
$$B_s B_z(i)\cong B_{sz}(i)\oplus \bigoplus_{v<z} B_v(i) ^{\oplus \mu(v,z)},$$
Where $\mu(v,z)\in\mathbb{Z}_{\geq 0}$ and $\mu(v,z)\neq 0$ implies that $z$ and $v$ have distinct parity (see \cite[Section 7.11]{Humph}).
Hence assuming that $\ell(p(\beta))-\ell(v)$ (or $\ell(p(\beta))-\ell(sz)$) and $i$ have different parity, it follows that $\ell(p(\beta'))-\ell(z)$ have distinct parity, hence that $m_{z,i}=0$. It follows that the summands of $B_s B_z(i)$ are all of the form $B_v(i)$ where $\ell(p(\beta))-\ell(v)$ and $i$ have the same parity. This concludes the proof. 

\end{proof}

\subsection{Conjectures}

As mentioned in the Introduction, the generalized version of the positivity of the inverse Kazhdan-Lusztig polynomials involving the bases $\{T_{x,A}\}_{x\in\W}$ is the following statement \cite{Dyer:private}

\begin{conjecture}[Dyer]\label{conj:inverseposgen}
Let $x\in\W$, $A\in\Bic$. Then $$T_{x,A}\in\sum_{w\in\W} \mathbb{Z}_{\geq 0}[v^{\pm 1}] C_w.$$
\end{conjecture}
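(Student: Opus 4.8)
The plan is to follow the strategy that worked for left and right Mikado braids in Theorem~\ref{thm:inversepos}, replacing the input from Elias--Williamson (Theorem~\ref{thm:ewlin}) and its dual (Proposition~\ref{thm:costand}) by a genuinely new linearity statement for minimal Rouquier complexes of \emph{arbitrary} Mikado braids $x_A$. Concretely, one would first attempt to prove: if $F_\beta$ is a minimal Rouquier complex for $\beta = x_A$ with $A\in\Bic$, then $F_\beta\in\Kbp\cap\Kbn$, with $^0 F_\beta$ containing a unique summand $B_x$ and all other summands indexed by elements $z<x$, and with $^i F_\beta$ a direct sum of shifts $B_z(i)$ for $z<x$ when $i\neq 0$. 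Granting such a statement together with a parity refinement (that $B_z(i)$ occurs only when $i$ and $\ell(z)-\ell_A(x)$ agree modulo $2$), the conjecture follows verbatim from the Grothendieck group argument in the proof of Theorem~\ref{thm:inversepos}: one has $T_{x,A}=\mathrm{cl}(F_\beta)$ up to an explicit power of $v$, the coefficient of $C'_z$ is $\sum_i (-1)^i m_{z,i} v^i$, and the parity condition collapses the sign, giving $\pm\sum_i m_{z,i} v^i$ and hence positivity after passing to the $C_w$ basis via \eqref{eq:cc'}.

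The key steps, in order, are: (1) prove the $\Kbp$ half using an inductive argument on $\ell(x)$ along a lifted reduced expression $s_1^{\varepsilon_1}\cdots s_k^{\varepsilon_k}$ of $x_A$, feeding each $F_{s_i}$ or $E_{s_i}$ into Lemma~\ref{lem:fses} --- but here the difficulty is immediate, since Lemma~\ref{lem:fses} only shows $\Kbp$ is stable under right multiplication by $F_s$ and $\Kbn$ under $E_s$, not the mixed combinations that appear in a general Mikado braid; (2) extract from Proposition~\ref{prop:libwill} the statement that $\Gamma_{\geq w_i/>w_i}^A(F_\beta)$ is $\Delta_x^A$ for $w_i=x$ and $0$ otherwise, and combine it with the ``negativity'' of the $\Delta^A$-character of the $B_z$ appearing, i.e.\ the consequence of Theorem~\ref{threeparam} and Corollary~\ref{cor:mult} that $[B_z:\Delta_u^A(j)]_A\neq 0$ with $u<z$ forces $j<0$; (3) run the Elias--Williamson-style ``staircase'' lemma (the analogue of \cite[Lemma 6.11]{EW}, as reproved for $E_w$ before Proposition~\ref{thm:costand}) to conclude that a summand $B_z(j)$ in homological degree $i$ forces a summand $B_{z'}(j')$ with $z'>z$, $j'>j$ in degree $i+1$, pinning down $^0F_\beta$ and bounding the shifts; (4) prove the parity refinement by induction along the lifted reduced expression, using that $B_s B_z$ decomposes with $\mu(v,z)\neq 0$ only when $v$ and $z$ have opposite parity (\cite[Section 7.11]{Humph}) exactly as in the proof of Theorem~\ref{thm:inversepos}.

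The main obstacle --- and the reason this is stated only as a conjecture rather than a theorem --- is step~(1) together with the absence of a dual version of the ``negativity'' property for general $A$. For left/right Mikado braids the complex factors as $E_{x^{-1}}F_y$ (or $F_x E_{y^{-1}}$), so one can apply the two one-sided stability statements of Lemma~\ref{lem:fses} separately to the $E$-block and the $F$-block; for a genuine Mikado braid $x_A$ with $A$ neither an inversion set nor the complement of one, the $F_{s_i}$'s and $E_{s_i}$'s are interleaved in an order dictated by $A$, and there is no way to segregate them. One would need a stability statement of the form ``$F\in\Kbp\cap\Kbn$ and $x s > x$ implies $F E_s, F F_s\in\Kbp\cap\Kbn$'', which is precisely the content that fails in general --- indeed the remark after Theorem~\ref{threeparam} already flags that the relevant properties of the standard filtrations ``fail in general''. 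A possible line of attack would be to show that the Libedinsky--Williamson formula (Proposition~\ref{prop:libwill}), which \emph{does} survive for all Mikado braids, is by itself strong enough to force linearity via the staircase argument without the $\Kbn$ input; but making the inductive step work when passing from $\beta'$ to $\beta=\bs_1^{\varepsilon_1}\beta'$ with $\varepsilon_1=1$ seems to require controlling $B_{s_1}\otimes{}^iF_{\beta'}$, i.e.\ knowing that the summands of ${}^iF_{\beta'}$ are themselves linear, which is where the circularity bites and where a new idea is needed.
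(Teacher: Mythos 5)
The statement you were asked to prove is not proved in the paper: it is stated there as Conjecture~\ref{conj:inverseposgen} (attributed to Dyer) and remains open. The paper only establishes the special cases $A=N(y)$ and $A=\T\backslash N(y)$ (Theorem~\ref{thm:inversepos}), and records, after Conjecture~\ref{thm:linearitygen}, exactly the reduction you describe: if linearity of the minimal Rouquier complex of a general Mikado braid $x_A$ (parts (1) and (2) of that conjecture) were known, the parity statement (3) and then the positivity would follow by the Grothendieck-group argument of Subsection~\ref{sub:inverse}. Your proposal therefore reconstructs the paper's own intended strategy and correctly diagnoses the principal obstruction, namely that for general biclosed $A$ the factors $F_{s_i}$ and $E_{s_i}$ are interleaved, so the one-sided stability statements of Lemma~\ref{lem:fses} cannot be applied blockwise as they can for $E_{x^{-1}}F_y$. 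You are right that no complete proof results from this, and the paper offers none either.

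One point in your outline is actually incorrect rather than merely incomplete, and it is worth isolating because it is precisely where the generalization breaks. In step (2) you assert that the ``negativity'' of the $\Delta^A$-character --- that $[B_z:\Delta_u^A(j)]_A\neq 0$ with $u<z$ forces $j<0$ --- is a consequence of Theorem~\ref{threeparam} and Corollary~\ref{cor:mult}. It is not. Those results give that the polynomials $h^A_{u,z}=\sum_i [B_z:\Delta_u^A(i)]_A\, v^{i+\ell_A(u)}$ have nonnegative coefficients and that the leading term at $u=z$ is $v^{\ell_A(z)}$, but they impose no bound on the exponents for $u<z$. In the case $A=\T$ the degree bound is extracted in the lemma preceding Proposition~\ref{thm:costand} from the bar-invariance characterization of $C'_z$ relative to the costandard basis ($C'_z\in H_{z^{-1}}^{-1}+\sum_{u<z} v^{-1}\mathbb{Z}[v^{-1}]H_{u^{-1}}^{-1}$); for a general biclosed $A$ the basis $\{T_{u,A}\}$ has no comparable compatibility with the bar involution, and no such degree bound is available. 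This is the failure flagged in the introduction (``the used properties of the standard filtrations fail in general''), and your own closing paragraph acknowledges it --- but step (2) as written contradicts that acknowledgement. Any successful attack on the conjecture must either find a substitute for this degree control or avoid the staircase argument altogether.
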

In Subsection \ref{sub:inverse}, we prove it in case $A=N(y)$ or $A=\T\backslash N(y)$ for $y\in\W$ (Theorem \ref{thm:inversepos}, see also Lemma \ref{lem:Mikado}), implying in particular that Property (\ref{p1}) holds for arbitrary Coxeter groups. In that case, the result followed from the linearity of the Rouquier complexes associated to left and right Mikado braids (Theorem \ref{thm:linearity}). We expect this last property to hold in general for Mikado braids:

\begin{conjecture}\label{thm:linearitygen}
Let $w\in \W$, $A\in\Bic$. Set $\beta:=w_A$. Let $F_\beta$ denote a minimal Rouquier complex for $\beta$. Then 

\begin{enumerate}
\item $^0 F_{\beta}$ has a unique summand isomorphic to $B_w$. All other summands of $^0 F_\beta$ are isomorphic to $B_z$ for $z<w$. 
\item For any $i\in\mathbb{Z}\backslash\{0\}$, we have $^i F_\beta=\bigoplus B_z(i)^{\oplus m_{z,i}}$ for $z<w$ and $m_{z,i}\in\mathbb{Z}_{\geq 0}$. In particular, we have $F_\beta\in\Kbn\cap\Kbp$.
\item We have $m_{z,i}=0$ if $i$ and $\ell(z)-\ell(w)$ have different parity.
\end{enumerate}
\end{conjecture}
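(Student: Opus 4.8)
\textbf{Plan for proving Theorem \ref{thm:linearity}.} The strategy is exactly that of Elias and Williamson's proof of Theorem \ref{thm:ewlin}, now made symmetric in order to accommodate the negative letters appearing in a Mikado braid. The two inputs are: (a) the ``positive'' linearity theorem of Elias and Williamson (Theorem \ref{thm:ewlin}), which says $F_w\in\Kbp\cap\Kbn$ with $^0F_w=B_w$ and with all summands in degree $i\geq 1$ indexed by elements strictly below $w$; and (b) its ``negative'' dual, Proposition \ref{thm:costand}, which says $E_w\in\Kbp\cap\Kbn$ with $^0E_w=B_w$ and with all summands in degree $i\leq -1$ indexed by elements strictly below $w$. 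Proposition \ref{thm:costand} is proved in the excerpt by dualizing the Elias--Williamson argument, the key point being that the $\Delta^{\T}$-character of $B_w$ is ``negative'' (i.e.\ $[B_w:\Delta_z^\T(i)]_\T\neq 0$ with $z<w$ forces $i<0$), which follows from Soergel's conjecture and the costandard description (\ref{costand}), together with the generalized Libedinsky--Williamson formula (Proposition \ref{prop:libwill}) used as a bookkeeping device to track in which homological degree a given $\Delta_z^\T$ appears. So by the time we reach Theorem \ref{thm:linearity} both inputs are available.

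\textbf{Key steps.} First I would take $\beta=\bx^{-1}\by$ and use that a Rouquier complex for it is $E_{x^{-1}}F_y$. Since $F_y\in\Kbn$ by Theorem \ref{thm:ewlin}, Lemma \ref{lem:fses}(2) applied inductively to the factors of $E_{x^{-1}}$ (peeling off one $E_s$ at a time) gives $E_{x^{-1}}F_y\in\Kbn$. Symmetrically, $E_{x^{-1}}\in\Kbp$ by Proposition \ref{thm:costand}, and Lemma \ref{lem:fses}(1) applied inductively to the factors of $F_y$ gives $E_{x^{-1}}F_y\in\Kbp$. Hence $F_\beta\in\Kbp\cap\Kbn$, which says precisely that every indecomposable summand in homological degree $i$ is of the form $B_z(i)$ — this is the ``linearity'' assertion and gives part (2) for the shift statement. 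Second, to identify the homological-degree-zero part and pin down the indexing elements, I would write $\beta$ as a lifted reduced expression $s_1^{\varepsilon_1}\cdots s_k^{\varepsilon_k}$ of $w=x^{-1}y$ (legitimate since $\ell(\beta)$ as a Mikado braid lifts a reduced word of $w$), forming the complex $K_{s_1}\otimes\cdots\otimes K_{s_k}$ with $K_{s_i}=F_{s_i}$ or $E_{s_i}$ according to the sign $\varepsilon_i$. Expanding this tensor product one sees that the only summand not indexed by an element strictly below $w$ in the Bruhat order is a single copy of $B_w$ sitting in homological degree $0$ (all the ``error terms'' from the two-term complexes $F_{s}$, $E_{s}$ strictly decrease length); alternatively one invokes Proposition \ref{prop:libwill} directly, which tells us $\Gamma^A_{\geq w/>w}(F_\beta)\cong\Delta_w^A$ so that exactly one $B_w$ survives, in degree $0$. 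Since this $B_w$ cannot be cancelled when passing to a minimal complex — nothing else of its shape is available to pair with it — we conclude $^0F_\beta$ contains exactly one $B_w$ and all remaining summands (in every degree) are $B_z(i)$ with $z<w$, which is (1) and (2). The case $\beta=\bx\by^{-1}$ is identical after exchanging the roles of $F$ and $E$ (and using the left-multiplication versions of Lemma \ref{lem:fses}).

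\textbf{Main obstacle.} The genuinely new ingredient, and the step that carries the weight, is Proposition \ref{thm:costand} — the negative-linearity statement for $E_w$. Everything else in the proof of Theorem \ref{thm:linearity} is then formal: a clean induction built on Lemma \ref{lem:fses} plus a length-decrease bookkeeping argument. The subtlety in Proposition \ref{thm:costand} is that one cannot literally quote Elias--Williamson; one must redo their argument with the Bruhat order replaced by (the reverse Bruhat order, i.e.) $<_\T$, and this is exactly where the generalized Libedinsky--Williamson formula (Proposition \ref{prop:libwill}) is indispensable — it supplies, for the twisted order $A=\T$, the statement that the $\Delta^\T$-subquotient functor applied to $E_w$ produces a single $\Delta_w^\T$ in degree $0$ and nothing indexed by $w$ elsewhere, which is what forces a surviving summand $B_z(j)$ in degree $i$ to propagate to a summand $B_{z'}(j')$ in degree $i+1$ with $z'>z$ and $j'>j$. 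Once that propagation lemma is in hand, the induction downward in homological degree closes and gives $E_w\in\Kbp$; combined with $E_w\in\Kbn$ (which is already Corollary \ref{cor:positivelift}) this yields Proposition \ref{thm:costand}, and then Theorem \ref{thm:linearity} follows as above. I do not anticipate difficulty in the parity/length comparisons, but some care is needed to ensure the inductive peeling in Lemma \ref{lem:fses} is applied to the correct side (left vs.\ right multiplication) in each of the two cases $\bx^{-1}\by$ and $\bx\by^{-1}$.
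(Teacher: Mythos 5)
Your proposal does not prove the statement you were given. The statement is Conjecture \ref{thm:linearitygen}, which asserts linearity of the minimal Rouquier complex of $w_A$ for an \emph{arbitrary} biclosed set $A\in\Bic$; the paper does not prove this (it is left open), and what you have written is in substance the paper's own proof of Theorem \ref{thm:linearity}, which covers only the cases $A=N(y)$ and $A=\T\backslash N(y^{-1})$, i.e.\ left and right Mikado braids $\bx\by^{-1}$ and $\bx^{-1}\by$. The core step of your argument --- factoring the Rouquier complex of $\beta$ as $E_{x^{-1}}F_y$ (or $F_xE_{y^{-1}}$) and peeling off letters with Lemma \ref{lem:fses} --- is exactly what is unavailable in general. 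By Example \ref{exple:mik}, for $\W$ the universal Coxeter group on $\{r,s,t\}$ there is $A\in\Bic$ with $x_A=\bt\,\bs^{-1}\br$, and since $\Br$ is free this braid is neither a left nor a right Mikado braid, so no factorization with all negative letters on one side exists.

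For such a mixed word the inductive peeling does not close: Lemma \ref{lem:fses} preserves $\Kbp$ only under multiplication by $F_s$ and $\Kbn$ only under multiplication by $E_s$. For instance $F_tE_s\in\Kbn$ (peel $E_s$ off $F_t\in\Kbn$), but to conclude $F_tE_sF_r\in\Kbp$ one would need $F_tE_s\in\Kbp$, which none of the quoted results provides; grouping the other way fails for the symmetric reason. Since a general $w_A$ alternates positive and negative letters according to $A$, neither containment propagates along the word, and this is precisely why the paper records the general statement only as a conjecture, noting that parts (1) and (2) would imply part (3) by the parity argument of Subsection \ref{sub:inverse} --- a part your write-up also does not address. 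In short, your argument is a correct account of the proof of Theorem \ref{thm:linearity}, but it leaves the actual conjecture untouched: the missing idea is a replacement for the $E/F$ factorization (or for Lemma \ref{lem:fses}) that works for arbitrary lifted reduced expressions $\bs_1^{\varepsilon_1}\cdots\bs_k^{\varepsilon_k}$.
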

If one proves $(1)$ and $(2)$, one can then argue exactly as in Subsection \ref{sub:inverse} to prove $(3)$ and derive Conjecture \ref{conj:inverseposgen}. 



\begin{thebibliography}{30}

\bibitem{AndLaur} H.H.~Andersen and N.~Lauritzen, \textsl{Twisted Verma modules}, Studies in memory of Issai Schur (Chevaleret/Rehovot, 2000), Progr. Math. {\bf 210} (2003), Birkhäuser Boston, Boston, MA, 1-26.

\bibitem{BjBr} A.~Björner and F.~Brenti, \textsl{Combinatorics of Coxeter groups}, GTM {\bf 231} (2005), Springer.

\bibitem{bourbaki} N.~Bourbaki, \textsl{Groupes et alg\`ebres de Lie}, Chapitres 4,5
et 6, Masson (1981).

\bibitem{DG} F.~Digne and T.~Gobet, \textsl{Dual braid monoids, Mikado braids and positivity in Hecke algebras}, \url{http://arxiv.org/pdf/1508.06817v2.pdf} (2015). 

\bibitem{Dyernil} M.J.~Dyer, \textsl{Modules for the dual nil Hecke ring}, \url{http://www3.nd.edu/~dyer/papers/nilhecke.pdf}.

\bibitem{Dyerth} M.J.~Dyer, \textsl{Hecke algebras and reflections in Coxeter groups}, Ph. D. Thesis, Sydney, August 1987.

\bibitem{DyerBru} M.J.~Dyer, \textsl{On the "Bruhat graph'' of a Coxeter system}, Compositio Math. {\bf 78} (1991), no. 2, 185-191. 

\bibitem{Dyershellings2} M.J.~Dyer, \textsl{Hecke algebras and shellings of Bruhat intervals II: Twisted Bruhat orders}, in \textsl{Kazhdan-Lusztig theory and related topics}, Contemp. Math. {\bf 139} (1992), AMS, 141-165.

\bibitem{Dyer2} M.J.~Dyer, \textsl{Quotients of twisted Bruhat orders}, J. of Algebra {\bf 163} (1994), 861-879.  

\bibitem{Dyerrep} M.J.~Dyer, \textsl{Representation theories from Coxeter groups},
Representations of groups (Banff, AB, 1994),  CMS Conf.\ Proc.\ {\bf 16} (1995), Amer.\ Math.\ Soc., Providence, RI, 105-139. 

\bibitem{Dyerweak} M.J.~Dyer, \textsl{On the weak order of Coxeter groups}, \url{http://arxiv.org/abs/1108.5557} (2011).

\bibitem{Dyer:private} M.J.~Dyer, private communication, 26.05.2015. 

\bibitem{DL} M.J.~Dyer and G.I.~Lehrer, \textsl{On positivity in Hecke algebras}, Geom. Ded. {\bf 25} (1990), 115-125. 

\bibitem{Edgar} T.~Edgar, \textsl{Sets of reflections defining twisted Bruhat orders}, J. of Algebraic Combinatorics {\bf 26} (2007), no. 3, 357-362.




\bibitem{EW} B.~Elias and G.~Williamson, \textsl{The Hodge theory of Soergel
bimodules}, Ann.\ of Math.\ (2) {\bf 180} (2014), no. 3, 1089-1136.




\bibitem{GH} I.~Grojnowski and M.~Haiman, \textsl{Affine Hecke algebras and positivity of LLT and Macdonald polynomials}, preprint (2007). 

\bibitem{HL} C.~Hohlweg and J.-P.~Labbé, \textsl{On inversion sets and the weak order in Coxeter groups}, \url{http://arxiv.org/abs/1502.06926} (2015).

\bibitem{Humph} J.~Humphreys, \textsl{Reflection groups and Coxeter groups}, Cambridge Studies in
Advanced Mathematics {\bf 29} (1990), Cambridge University Press.

\bibitem{Jensen} L.T.~Jensen, \textsl{The 2-braid group and Garside normal form}, \url{http://arxiv.org/abs/1505.05353} (2015).

\bibitem{KL} D.~Kazhdan and G.~Lusztig, \textsl{Representations of Coxeter
Groups and Hecke Algebras}, Invent.\ Math.\ {\bf 53} (1979), 165-184. 

\bibitem{KLproof} D.~Kazhdan and G.~Lusztig, \textsl{Schubert varieties and Poincaré duality}, Geometry of the Laplace operator (Proc. Sympos. Pure Math., Univ. Hawaii, Honolulu, Hawaii, 1979), Proc. Sympos. Pure Math., XXXVI (1980), Amer. Math. Soc., Providence, R.I., 185-203.

\bibitem{LW} N.~Libedinsky and G.~Williamson, \textsl{Standard objects in 2-braid groups}, \textsl{Proc. Lond. Math. Soc.} (3) {\bf 109} (2014), no. 5, 1264-1280.

\bibitem{Maz} V.~Mazorchuk, \textsl{Lectures on algebraic categorification}, QGM Master Class Series (2012), European Mathematical Society (EMS), Zürich.

\bibitem{Rose} D.E.V.~Rose, \textsl{A note on the Grothendieck group of an additive category}, 
Bulletin of Chelyabinsk State University {\bf 358} (2015), no. 3, Mathematics. Mechanics. Informatics. Issue 17, 135-139.

\bibitem{Rouqprep} R.~Rouquier, \textsl{Categorification of braid groups}, \url{http://arxiv.org/abs/math/0409593} (2004).

\bibitem{Rouq}  R.~Rouquier, \textsl{Categorification of $\mathfrak{sl}_2$ and braid groups}, Trends in representation theory of algebras and related topics, Contemp. Math. {\bf 406} (2006), Amer. Math. Soc., Providence, RI, 137-167.

\bibitem{Rouq1} R.~Rouquier, \textsl{Derived equivalences and finite dimensional algebras}, International Congress of Mathematicians. Vol. II (2006), Eur. Math. Soc., Zürich, 191-221.

\bibitem{S1} W.~Soergel, \textsl{The combinatorics of Harish-Chandra bimodules}, J. Reine Angew. Math. {\bf 429} (1992), 49-74.

\bibitem{S} W.~Soergel, \textsl{Kazhdan-Lusztig polynomials and indecomposable bimodules over polynomial rings}, Journal of the Inst. Math. Jussieu {\bf 6} (2007), Issue 3, 501-525.

\bibitem{Stroppel} C.~Stroppel, \textsl{Category $\mathcal{O}$: gradings and translation functors}, J. of Algebra {\bf 268} (2003), 301-326.

\bibitem{Will} G.~Williamson, \textsl{Singular Soergel bimodules}, Int. Math. Res. Not. {\bf 20} (2011), 4555-4632.

\bibitem{Willkl} G.~Williamson, \textsl{The Hecke algebra and Soergel bimodules}, \url{http://people.mpim-bonn.mpg.de/geordie/GWilliamson-algebraicgroups.pdf}. 

\end{thebibliography}
\end{document}